%% file: main.tex
\input{preamble}

\input{macros}

\begin{document}

\title{Moments of random multiplicative functions with polynomial coefficients}
\author{Xinyu Wang}
\maketitle

\begin{abstract}
Let \(f\) be a Steinhaus random multiplicative function and let \(g\) be a polynomial of degree \(d\). Write
\[
S_N=\frac1{\sqrt N}\sum_{n\le N}f(n)\,e(g(n)).
\]
We prove a quantitative dichotomy for the moments of \(S_N\): for each integer \(s\ge 2\), either \(\mathbb{E}|S_N|^{2s}\) is close to the Gaussian moment \(s!\), or the coefficients of \(g\) can be approximated by rationals with a small denominator. In particular, if the coefficients of \(g\) satisfy a Diophantine condition, then \(S_N\) converges in law to a complex normal distribution with mean \(0\) and variance \(1\). Lean~4 code for the proofs is provided, for convenience of verification.
\end{abstract}

\input{introduction}
\input{main-theorem}

\bibliographystyle{abbrv}
\bibliography{b}

\end{document}

%% file: preamble.tex
\documentclass[11pt]{article}
\usepackage[T1]{fontenc}
\usepackage[utf8]{inputenc}
\usepackage{color,xcolor}
\usepackage{mathpazo} % Palatino: elegant serif for text and math
\usepackage{microtype}
\usepackage{bbm}
\usepackage{amsmath}
\usepackage{amssymb}
\usepackage{amsthm}
\usepackage{verbatim}
\usepackage{fancyhdr}
\usepackage[margin=1in]{geometry}
\usepackage{mathtools}
\usepackage{mathrsfs}
\usepackage{tocloft}
\usepackage{graphicx}
\usepackage{tikz}
\usetikzlibrary{arrows.meta,positioning,fit,calc}
\usepackage{multicol}
\usepackage{abstract}
\definecolor{linkred}{RGB}{140,30,30}
\definecolor{inkplum}{RGB}{58,36,51}
\definecolor{planeivory}{RGB}{248,244,236}
\definecolor{plum}{RGB}{158,32,146}
\definecolor{gold}{RGB}{218,162,18}
\definecolor{goldink}{RGB}{148,96,0}
\colorlet{figink}{inkplum}
\colorlet{figplane}{planeivory}
\colorlet{figblue}{plum}
\colorlet{figbluepale}{plum!46!white}
\colorlet{figclay}{gold}
\colorlet{figrose}{gold}
\colorlet{goldpale}{gold!42!white}
\colorlet{figcell}{plum!62!white}
\colorlet{figcellpale}{plum!28!white}
\usepackage[colorlinks=true, pdfstartview=FitV, linkcolor=linkred,
citecolor=linkred, urlcolor=linkred]{hyperref}

\newtheorem{theorem}{Theorem}
\newtheorem{lemma}[theorem]{Lemma}
\newtheorem{proposition}[theorem]{Proposition}
\newtheorem{remark}[theorem]{Remark}
\newtheorem{definition}[theorem]{Definition}

\newtheorem{coro}[theorem]{Corollary}
\newtheorem{clm}{Claim}

%% file: macros.tex
\newcommand{\Sg}[1]{S_g\!\left(#1\right)}
\newcommand{\gprod}{%
  \prod_{i=1}^s e(g(n_i))\,\overline{e(g(m_i))}%
}

%% file: introduction.tex
% !TEX root = ../main.tex
\section{Introduction}
\label{sec:intro}

  A
Steinhaus random multiplicative function is a probabilistic model 
for the
M\"obius function.
The definition is as follows.
Let $\{f(p)\}_{p\text{ prime}}$ be a sequence of independent random variables,
identically uniformly distributed on the complex unit circle.
We then extend the definition of $f$ to all natural numbers by complete multiplicativity.
That is, writing $p^k\parallel n$ to mean that $p^k$ is the highest power of $p$ that divides $n$, we have
\[
f(n):=\prod_{p^k\parallel n}f(p)^k.
\]
As in the definition, the sequence $f(n)$ exhibits both randomness and multiplicativity.
Hence one may ask how the random multiplicative structure of $f(n)$ interacts with the structure of a deterministic sequence?
The sums of interest are $\sum_{n\le N}f(n)w(n)$, for a deterministic
sequence $w$.
In this paper, we take $w(n)=e(g(n))$, where $e(x)=\exp(2\pi i x)$
and
$g$ is a real polynomial of degree~$d$,
\[
g(n)=\sum_{j=0}^d\beta_j n^j,
\]
where $\beta_j\in\mathbb{R}$, $j=0,1,\ldots,d$ are the coefficients of the polynomial.
%%% Explain what beta is. and how to project into R/Z
Write
\[
S_N=\frac1{\sqrt N}\sum_{n\le N}f(n)\,e(g(n)).
\]
Here $\sqrt N$ is the size of the standard deviation of
$\sum_{n\le N}f(n)\,e(g(n))$.  

%Note that $S_N$ is a random variable. 
We are interested in the moments of the random variables $S_N$.
We give a quantitative dichotomy for the moments of $S_N$ --- the
diophantine property of the coefficients $\beta_i$ determines the moment behaviour.

We adapt the definition 
of the smoothness norm as in \cite{BT} to describe the Diophantine property.
Throughout, $\|\cdot\|_{\mathbb{R}/\mathbb{Z}}$ denotes distance to the nearest integer.

\begin{definition}[{$C^\infty[N]$ norm}]
\label{def:c-infty}
For $g(n)=\sum_{j=0}^d\beta_j n^j$ as above and $N\geq 1$, we use the
monomial-coefficient smoothness norm
\[
\|g\|_{C^\infty[N]}
:=\max_{1\leq j\leq d}\, N^j\,\|\beta_j\|_{\mathbb{R}/\mathbb{Z}}.
\]
\end{definition}
As in Green--Tao \cite[Definition~2.7]{BT}, this smoothness norm is designed to capture the notion of a polynomial sequence which is slowly-varying.
Note that an upper bound on $\|qg\|_{C^\infty[N]}$ for some small nonzero integer $q$ says that the coefficients of $g$ can be approximated by rationals with a small denominator.

The following theorem is our main result.
\begin{theorem}[Moment dichotomy]\label{thm:main}
Given integers $d \geq 1$ and $s\geq 2$, there exists a constant $c_0=c_0(d,s)>0$ such that the following holds.
For all integers $N$ large enough depending only on $d$ and~$s$, let $S_N$ be defined as above for any polynomial $g$ of degree~$d$.
For every $N^{-c_0}<\delta<1/8$, at least one of the following holds:
\begin{enumerate}
    \item \[
    \Big|\mathbb{E}\big|S_N\big|^{2s}-s!\Big|
    \ll_{d,s} \delta(\log N)^{O(s^2)}.
    \]
    \item There exists $q\in\mathbb{Z}\setminus\{0\}$ with $|q|\ll_d\delta^{-O_d(1)}$ such that
    \[
    \|qg\|_{C^{\infty}[N]}\ll_d\delta^{-O_d(1)}.
    \]
    \end{enumerate}
\end{theorem}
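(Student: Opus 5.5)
Expanding the moment and using orthogonality of the Steinhaus variables (\(\mathbb{E} f(a)\overline{f(b)}=\mathbf 1_{a=b}\)), we get
\[
\mathbb{E}|S_N|^{2s}=\frac1{N^s}\sum_{\substack{n_1,\dots,n_s,m_1,\dots,m_s\le N\\ n_1\cdots n_s=m_1\cdots m_s}}\gprod .
\]
The plan is to split the solutions of \(n_1\cdots n_s=m_1\cdots m_s\) into diagonal and off-diagonal parts.

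\textbf{Diagonal part.} These are the tuples where \((m_i)\) is a permutation of \((n_i)\). Here the phase is identically \(1\), and the contribution is \(s!+O_s(1/N)\), after inclusion--exclusion over coincidences among the \(n_i\).

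\textbf{Off-diagonal parametrization.} For the remaining solutions, I would use the standard parametrization of multiplicative equations. Every solution can be written as \(n_i=\prod_j a_{ij}\) and \(m_j=\prod_i a_{ij}\) for a matrix of positive integers \((a_{ij})_{1\le i,j\le s}\), unique after fixing a gcd normalization. Off-diagonal solutions are exactly those in which at least two entries of some row (equivalently some column) are \(>1\). I would then apply a dyadic decomposition \(a_{ij}\sim A_{ij}\), giving \((\log N)^{O(s^2)}\) boxes; this is the source of the loss factor. In each box, fix all variables except one pair, or more usefully two entries \(a_{i j}, a_{i j'}\) in the same row with \(A_{ij}A_{ij'}\) large. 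The phase then becomes \(e\big(g(x\cdot u)+g(x\cdot v)-g(y\cdot u')-\cdots\big)\), with \(x,y\) the free variables and the other factors fixed. Trivially bounding the rest, a box whose contribution exceeds \(\delta\) times its trivial size \(\ll N^s/\text{(savings)}\) forces a two-dimensional polynomial exponential sum of the form \(\sum_{x\sim X,\,y\sim Y} e(P(x,y))\) to be large. Here \(P(x,y)=g(c_1xy)\pm g(c_2 x)\pm\cdots\) is a polynomial of degree \(d\) in each variable.

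\textbf{Structure from large sums.} Boxes where some dimension is smaller than \(\delta^{-C}\) contribute \(O(\delta)\) relative to the trivial total, by a divisor-type count: off-diagonal solutions with all variables free number only \(\ll N^s(\log N)^{O(s^2)}\) times the relevant proportion. For the other boxes, I would invoke Weyl differencing, or the Green--Tao quantitative equidistribution theorem on multiparameter polynomial sequences as in \cite{BT}. This yields a nonzero \(q\ll\delta^{-O(1)}\) with \(\|q\,\partial\text{-coefficients of }P\|\) small at scale \(X^{j}Y^{k}\). Since \(P\) contains the term \(g(c\,xy)\), the coefficient of \(x^jy^j\) is \(c^j\beta_j\) (plus controlled contributions). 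This gives \(\|q c^j\beta_j\|\ll \delta^{-O(1)}/(XY)^j\). One must then absorb \(c^j\) into \(q\) or average over \(c\): if \(c\le\delta^{-O(1)}\) this is fine, and otherwise the box with large \(c\) and small \(XY\) is handled by running the argument on the variables with large range instead. Since \(XY\gg N/\delta^{-O(1)}\) can be arranged in the dominant boxes, we get \(\|q\beta_j\|\ll\delta^{-O(1)}N^{-j}\), i.e.\ alternative 2.

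\textbf{Main obstacle.} The hard part is the bookkeeping of the off-diagonal boxes. One has to guarantee that every significant box contains a pair of variables whose product ranges over a scale \(\gg\delta^{O(1)}N\), and that the extracted Diophantine information concerns \(\beta_j\) rather than some combination \(\beta_j(c_1^j\pm c_2^j)\) that may degenerate. I would handle the degeneracy by choosing the pair so that its product appears in exactly one \(n_i\) and one \(m_k\) (a single matrix entry pairing). The extracted coefficient then has a single nonvanishing rational multiplier of size \(\delta^{-O(1)}\), which is absorbed into \(q\). The \((\log N)^{O(s^2)}\) factor in alternative 1 comes from summing the \(O(\delta)\) bound over the dyadic boxes.
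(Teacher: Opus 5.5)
There is a genuine gap, and it is in the step you flag as the main obstacle. Your Weyl step only gives $\|qc^j\beta_j\|_{\mathbb R/\mathbb Z}\ll\delta^{-O(1)}(XY)^{-j}$, where $c$ is the product of the frozen entries of the row. This yields alternative~(2) only when $c\le\delta^{-O(1)}$. The claim that $XY\gg\delta^{O(1)}N$ can be arranged in the dominant boxes is false for $s\ge3$. Take the box with every $a_{ij}\asymp N^{1/s}$: all row and column products are $\asymp N$, and it carries $\asymp_s N^s$ solutions. Yet any two entries of a row span only $N^{2/s}$, leaving $c\asymp N^{1-2/s}$, and no variable has a larger range to switch to. Such boxes are the bulk rather than an exception, because a pair covering $n_i$ up to a factor $\delta^{-O(1)}$ forces the other $s-2$ entries of that row to be $\le\delta^{-O(1)}$.

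In the other direction, your claim that boxes with a side below $\delta^{-C}$ are negligible by a divisor count is also false, already for $s=2$. The tuples $(n_1,n_2,m_1,m_2)=(2x,y,x,2y)$ give $\asymp N^2$ non-permutation solutions, with matrix $\bigl(\begin{smallmatrix}x&2\\1&y\end{smallmatrix}\bigr)$. Their contribution $\bigl|\sum_{x\le N/2}e(g(2x)-g(x))\bigr|^2$ is small only by an exponential-sum estimate. Boxes of this kind carry $\gg N^s$ solutions, which is not $O(\delta N^s(\log N)^{O(s^2)})$ in the range where the theorem says anything.

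Two smaller issues. Under a gcd normalisation, "off-diagonal iff some row has two entries $>1$" fails: the permutation solution $(4,6;6,4)$ becomes $\bigl(\begin{smallmatrix}2&2\\3&2\end{smallmatrix}\bigr)$. Also, the normalisation's coprimality conditions couple your free pair to the frozen entries, and these have to be removed by M\"obius inversion as in Lemma~\ref{lem:I1}.

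What is missing is a way to extract information on $\beta_j$ from phases whose coefficients carry a large integer multiplier; this cannot be avoided. The paper does not try to make the free range $\asymp N$. It frees a single variable, the diagonal entry $a_{11}=\gcd(n_1,m_1)$, which is $\ge A>\delta^{-C_d}$ because one works on $\mathscr{F}_1=\{\gcd(n_1,m_1)\ge A\}$. The complement of $\bigcup_i\mathscr{F}_i$ is sparse since there $n_1\le A^s$. The paper then accepts the degenerate coefficients $\beta_j(c^j-D^j)$, with $c=A_1$ and $D=B_1$ as large as $N$.

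Lemma~\ref{lem:diophantine-nil} shows that, for fixed $D$, the set of $c\in[1,D)$ with $\bigl|\sum_{n\in L}e(g(nc)-g(nD))\bigr|>\delta N/D$ has size $\ll_d\delta D$ unless alternative~(2) holds. Otherwise $q\beta_jc^j$ lies in an arc of length $\delta^{-O(1)}(D/N)^j$ about $q\beta_jD^j$ for a $\delta^{O(1)}$-dense set of $c$. The Waring-type count (Lemma~\ref{lem:gt-waring}) and effective recurrence (Lemma~\ref{lem:bt-recurrence}) then upgrade this to $\|Kq\beta_j\|_{\mathbb R/\mathbb Z}\ll\delta^{-O(1)}N^{-j}$. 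The saving comes from the sparsity of bad base points, via Cauchy--Schwarz against $\tau_{s-1}(A_1)$. The main term comes inductively from $n_1=m_1$ and inclusion--exclusion over the $\mathscr{F}_i$.

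To rescue your two-variable scheme you would need an averaging over the cofactor $c$ of this kind. Alternatively, you could free an entire row and use a multiparameter equidistribution theorem on very unbalanced boxes. Neither appears in the proposal.
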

Note that $s!$ is exactly the $2s$-th Gaussian moment of a complex normal distribution with mean $0$ and variance $1$.
In other words, either the $2s$-th moment is close to $s!$, or $qg$ is slowly varying for some small nonzero integer $q$.

Let us note that allowing the second alternative is necessary.
If we take $g$ to be a constant, then
$|S_N|=|N^{-1/2}\sum_{n\le N}f(n)|$, and the $2s$-th moment of this sum
is of size $(\log N)^{\Theta(s^2)}$ rather than $s!$, by Harper,
Nikeghbali and Radziwi{\l}{\l} \cite{HNR15} and Heap and Lindqvist
\cite{HeapLindqvist16}. 

When $s=1$ one has $\mathbb{E}|S_N|^2=1$ immediately, by Steinhaus orthogonality, that is,
since $\mathbb{E} f(p)=0$ and natural numbers have unique prime factorisations, it is immediate that for all natural numbers $n$ and $m$,
\[
\mathbb{E}\bigl[f(n)\overline{f(m)}\bigr]=\mathbf{1}_{n=m}.
\]
Let us also note that the odd moments of $S_N$ tend to zero as $N\to\infty$, which can be 
shown by easy arguments. See the arguments in the proof of Corollary \ref{thm:cor-clt}.

For related results regarding the moments of partial sums of random multiplicative functions, Pandey, Wang and Xu \cite{PWX24} showed
that the $2s$-th moments of $H^{-1/2}\sum_{x<n\le x+H}f(n)$ match the Gaussian
values $s!$ when $H$ is a sufficiently short interval.  Wang and Xu
\cite{WX24} showed that the $2s$-th moments of
$N^{-1/2}\sum_{n\le N}f(P(n))$ match the Gaussian values $s!$, for a
polynomial $P\in\mathbb{Z}[x]$ with at least two distinct complex roots.
Meanwhile, Benatar, Nishry and Rodgers \cite{BNR} showed that
\[
\mathbb{E}\Biggl|\int_0^1\Bigl|N^{-1/2}\sum_{n\le N}f(n)\,e(n\theta)\Bigr|^{2s}\,d\theta-s!\Biggr|^2\to 0,
\]
where the expectation is over \(f\).
This holds for $s\ll(\log N/\log\log N)^{1/3}$.
Each of the results above yields a central limit theorem.

If the coefficients of $g$ satisfy a Diophantine condition, then the following central limit theorem holds.
\begin{coro}[Central limit theorem]\label{thm:cor-clt}
Write $g(n)=\beta_d n^d+\cdots+\beta_1 n+\beta_0$. Assume that for every $\varepsilon>0$ there exists $C=C(g,\varepsilon)>0$ such that for every nonzero integer $q$,
\[
\max_{1\leq i\leq d}\,\|q\beta_i\|_{\mathbb{R}/\mathbb{Z}}
\geq C\exp\bigl(-|q|^{\varepsilon}\bigr).
\]
Then $S_N$ converges in law to a complex normal distribution with mean $0$ and variance $1$.
\end{coro}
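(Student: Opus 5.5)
We derive the corollary from Theorem~\ref{thm:main} by the method of moments for complex random variables. The law of a standard complex Gaussian $Z$ is determined by its mixed moments $\mathbb{E}\,Z^a\overline{Z}^b=\mathbf{1}_{a=b}\,a!$, which grow slowly enough for determinacy. So it suffices to show that for every fixed pair of integers $a,b\ge 0$,
\[
\mathbb{E}\,S_N^a\overline{S_N}^{\,b}\to \mathbf{1}_{a=b}\,a!.
\]
The proof therefore has two parts: the diagonal case $a=b=s$, and the off-diagonal case $a\neq b$.

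For $a\neq b$ we use the rotation invariance of Steinhaus variables. Fix a prime $p_0$ and replace $f(p_0)$ by $e^{i\theta}f(p_0)$. The law of $f$ is unchanged, but each term is multiplied by $e^{i\theta(\nu_{p_0}(n_1\cdots n_a)-\nu_{p_0}(m_1\cdots m_b))}$. Expanding $S_N^a\overline{S_N}^{\,b}$ and using orthogonality, only tuples with $n_1\cdots n_a=m_1\cdots m_b$ contribute. Each contributing term is $N^{-(a+b)/2}$ times a unimodular coefficient. The number of such tuples is at most $\sum_{M}\tau_a(M)\tau_b(M)$ with $M\le N^{\min(a,b)}$, which is $N^{\min(a,b)+o(1)}$. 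Since $a\neq b$ gives $\min(a,b)<(a+b)/2$, these moments tend to $0$; this is the ``easy argument'' for odd moments. The case $s=1$ is exact, and $s=0$ is trivial.

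For $a=b=s\ge 2$ we apply Theorem~\ref{thm:main} with $\delta=N^{-c}$, where $0<c<c_0$ is small and depends on $d,s$. If alternative~1 holds, then $|\mathbb{E}|S_N|^{2s}-s!|\ll N^{-c}(\log N)^{O(s^2)}\to0$. It remains to rule out alternative~2 for large $N$; this is the main step.

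Suppose alternative~2 holds. Then there is $q\ne0$ with $|q|\le C_1N^{Kc}$ and $N^j\|q\beta_j\|\le C_1N^{Kc}$ for all $1\le j\le d$, where $K=O_d(1)$. Since $\|q\beta_j\|\le C_1N^{Kc-j}\le C_1N^{Kc-1}$, we get
\[
\max_j\|q\beta_j\|\le C_1N^{Kc-1}\le C_1' |q|^{-A},
\]
with $A=(1-Kc)/(Kc)$, which is large when $c$ is small. Now apply the hypothesis with $\varepsilon=1$, which gives $\max_j\|q\beta_j\|\ge C\exp(-|q|)$. This alone is not a contradiction, because a polynomial upper bound in $|q|$ is weaker than an exponential lower bound. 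The remedy is to use the hypothesis with $\varepsilon$ small and to compare the two bounds in terms of $N$ rather than $q$. Since $|q|\le C_1N^{Kc}$, the hypothesis gives
\[
\max_j\|q\beta_j\|\ge C\exp\bigl(-C_1^{\varepsilon}N^{Kc\varepsilon}\bigr),
\]
while the upper bound is $C_1N^{Kc-1}$. The lower bound decays like a stretched exponential in $N$ and the upper bound only polynomially, so they are compatible and there is still no contradiction.

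The fix is to let $\delta$ shrink more slowly. Choose $\delta=\delta_N$ with $\delta_N\to 0$ but $\delta_N(\log N)^{O(s^2)}\to 0$ still; for example $\delta=\exp(-(\log N)^{1/2})$, which lies above $N^{-c_0}$ for large $N$. Alternative~1 then still gives convergence of the $2s$-th moment. Under alternative~2 we now have $|q|\le \delta^{-K}=\exp(K(\log N)^{1/2})$ and
\[
\|q\beta_j\|\le \delta^{-K}N^{-1}.
\]
The hypothesis with $\varepsilon<1$ gives $\|q\beta_j\|\ge C\exp\bigl(-e^{K\varepsilon(\log N)^{1/2}}\bigr)$, which is still far smaller than $N^{-1}$, so again nothing is gained. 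The honest difficulty is therefore this Diophantine comparison: the hypothesis as stated only excludes approximations of exponential quality. The plan is to exploit that, along any sequence where alternative~2 persists, the $q=q_N$ are either bounded or unbounded. If some fixed $q$ recurs infinitely often, then $\|q\beta_j\|\le C_1N^{Kc-1}\to0$ forces $q\beta_j\in\mathbb{Z}$, which contradicts the hypothesis. If $|q_N|\to\infty$, we must extract a contradiction from $\|q_N\beta_j\|\le |q_N|^{-A}$ for all $j$ against the lower bound $C\exp(-|q_N|^{\varepsilon})$. Closing this gap, possibly by choosing $\delta$ more cleverly in terms of $g$ or by passing to a subsequence argument on $N$, is the step I expect to be the real obstacle.
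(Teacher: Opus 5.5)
Your treatment of the mixed moments $\mathbb{E}\,S_N^a\overline{S_N}^{\,b}$ with $a\neq b$ (orthogonality plus the divisor count $N^{\min(a,b)+o(1)}$) and your appeal to moment determinacy agree with the paper. However, the proposal has a genuine gap at the step you flag yourself: you never rule out alternative~(2) of Theorem~\ref{thm:main}, so $\mathbb{E}|S_N|^{2s}\to s!$ is not established.

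The missing idea is to let $\delta$ decay as slowly as the theorem allows, namely polylogarithmically. Take $\delta=(\log N)^{-K}$ with $K=K(s)$ larger than the exponent in the loss $(\log N)^{O(s^2)}$. This choice is admissible, since $\delta>N^{-c_0}$ for large $N$, and alternative~(1) still gives $\bigl|\mathbb{E}|S_N|^{2s}-s!\bigr|\ll_{d,s}(\log N)^{O(s^2)-K}\to0$. Under alternative~(2) you now have $|q|\ll_d(\log N)^{B}$ with $B=O_{d,s}(1)$. Since every index satisfies $i\geq1$, you also have $\max_i\|q\beta_i\|_{\mathbb{R}/\mathbb{Z}}\ll_{d}(\log N)^{B}N^{-1}$. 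Now apply the hypothesis with a fixed $\varepsilon=\varepsilon(d,s)<1/B$. Then $|q|^{\varepsilon}\le\frac12\log N$ for large $N$, so $\max_i\|q\beta_i\|_{\mathbb{R}/\mathbb{Z}}\ge C\exp(-|q|^{\varepsilon})\ge C N^{-1/2}$, with $C=C(g,\varepsilon)$ fixed. This contradicts the upper bound once $N$ is large.

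Your two trial choices, $\delta=N^{-c}$ and $\delta=\exp(-(\log N)^{1/2})$, fail for exactly the reason you diagnosed. They allow $|q|$ to be so large that $|q|^{\varepsilon}$ exceeds $\log N$, which makes the lower bound smaller than every power of $N$. The hypothesis ``for every $\varepsilon>0$'' is tailored to $q$ of polylogarithmic size, where $\exp(-|q|^{\varepsilon})=N^{-o(1)}$.

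The fallback you propose, splitting into bounded and unbounded $q_N$ along a subsequence, cannot close the gap by itself. In the unbounded case you discard the relation between $|q_N|$ and $N$ and keep only $\|q_N\beta_j\|_{\mathbb{R}/\mathbb{Z}}\le|q_N|^{-A}$. Approximations of polynomial quality are entirely compatible with the hypothesis, which only forbids approximations of quality $\exp(-|q|^{\varepsilon})$. What is needed is to keep $|q|$ at most polylogarithmic in $N$, and the choice of $\delta$ is precisely what controls this.
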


For example, the hypothesis holds if some coefficient \(\beta_i\) is an algebraic irrational. In fact, it can be shown that the exceptional set has Hausdorff dimension zero.

From Harper \cite{HarperI} it follows that Corollary~\ref{thm:cor-clt} fails for rational phases, so some Diophantine condition is necessary.
Previously, Soundararajan and Xu \cite{KX} gave a general criterion on a deterministic weight $w$ under which $N^{-1/2}\sum_{n\le N}f(n)w(n)$ converges in law to a centred complex normal of variance $1$. Their proof uses the martingale central limit theorem and a fourth-moment computation. For the linear phase $w(n)=e(n\theta)$, \cite[Theorem~1.6]{KX} imposes a condition of the same shape with a single value $\varepsilon=1/50$.
Here, however, the Diophantine condition is assumed for every $\varepsilon>0$. This is because we use the moment method to prove the central limit theorem, and $\varepsilon$ must be taken smaller as the moment order increases
in order to make sure the higher moments of $S_N$ tend to Gaussian moments. But in fact, for the fourth-moment, we only need to assume the Diophantine condition for a fixed $\varepsilon$ depending only on the degree of the polynomial.

The implied constants in Theorem~\ref{thm:main} depend on $s$, and we state the result for each fixed $s$. The same argument should allow $s$ to grow slowly with $N$, once its dependence is tracked uniformly. For instance, one expects this for $s=o\bigl((\log N/\log\log N)^{1/3}\bigr)$.

%%% Please revise English grammar : the use of articles.
We believe that Theorem \ref{thm:main} can be extended to other sequences, for example
nilsequences such as $w(n)=e(\{n\alpha\}n\beta)$ where $\alpha,\beta\in \mathbb{R}/\mathbb{Z}$, and $\{n\alpha\}$ is the fractional part of $n\alpha$.
But the central limit theorem need not hold for an arbitrary Lipschitz function $F$ in place of $e(\cdot)$.
See Schlitt \cite{Schlitt26} for a necessary and sufficient condition
on \(F\) under which \(\sum_{n\le N}f(n)F(n\alpha)\) obeys a central limit
theorem, where \(F\) is \(1\)-periodic of bounded variation and \(\alpha\)
is irrational.

The main input of Theorem \ref{thm:main} is the quantitative Weyl theorem of Green--Tao \cite{BT} for polynomial orbits on the circle.
With this main input, the other ingredients
include the inclusion-exclusion decomposition, some parameterisation lemmas, divisor sum estimates and effective recurrence arguments that appeared
in \cite{BT} and \cite[\S3]{GTmob}.
The proof strategy is further explained in Section~\ref{sec:proof-strategy}.

\subsection{Formalisation}
\label{sec:formalisation}
For convenience of verification, we provide Lean~4 code for the proofs
of the results at
\begin{center}
\url{https://github.com/cutedonkeyhhh/RMF_repo}.
\end{center}
See this repository for the details.

\subsection{Notation}
\label{sec:notation}

We write $f\ll g$, or equivalently $f=O(g)$, if $|f|\leq C|g|$ for an absolute constant $C$.
Subscripts indicate the dependence of implied constants on parameters.

The $k$-fold divisor function is denoted $\tau_k$.
If $\mathcal{B}$ is a set and $\mathcal{P}$ is a property, we write $\mathcal{B}\cap\{\mathcal{P}\}$ for the subset of $\mathcal{B}$ consisting of those elements that satisfy $\mathcal{P}$.

\subsection*{Acknowledgements}

The author is very grateful to  Weikun He for discussions, corrections, and suggestions that greatly improved the presentation of this paper. Sincere thanks also go to  Changguang Dong for very helpful comments. 
The author is supported by the National Key R\&D Program of China (No. 2022YFA1007500).
Grok~4.5 and Grok~4.6 were used to provide Lean code and to polish the writing.

%% file: main-theorem.tex
% !TEX root = ../main.tex
\section{Setup and inductive statement}
\label{sec:setup}

The proof of Theorem~\ref{thm:main} is by induction on~$s$.
We begin with the divisor bounds, which will be used throughout the paper, all of which are standard.
\begin{lemma}[Divisor sums]\label{lem:divisor}
For integers $N\geq 3$, $l\geq 1$, and the $l$-fold divisor function $\tau_l$, we have
\begin{gather*}
\sum_{n\leq N}\tau_l(n)\leq N(1+\log N)^{l-1}\leq N(2\log N)^{l-1},\\
\sum_{n\leq N}\tau_l(n)^2\leq N(2\log N)^{l^2-1},
\shortintertext{and for any positive integer $A<N$,}
\sum_{A\leq n\leq N}\frac{\tau_l(n)^2}{n^2}\leq \frac{5(2\log N)^{l^2-1}}{A}.
\shortintertext{Moreover,}
\sum_{n\leq N}\frac{\tau_l(n)}{n}\leq (2\log N)^{l}.
\end{gather*}
\end{lemma}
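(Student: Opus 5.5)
We prove the four bounds in order. All of them reduce to the harmonic-sum estimate $\sum_{a\le N}1/a\le 1+\log N$ and the inequality $1+\log N\le 2\log N$, which holds for $N\ge 3$ because $\log 3>1$. The only non-formal step is the second bound, which needs the pointwise inequality $\tau_l(n)^2\le\tau_{l^2}(n)$; the constant $5$ in the third bound leaves plenty of room.

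\emph{First and fourth bounds.} For the first bound, write $\tau_l(n)$ as the number of ordered factorisations $n=a_1\cdots a_l$. Summing over $a_l\le N/(a_1\cdots a_{l-1})$ gives
\[
\sum_{n\le N}\tau_l(n)=\sum_{a_1\cdots a_{l-1}\le N}\Big\lfloor \frac{N}{a_1\cdots a_{l-1}}\Big\rfloor\le N\Big(\sum_{a\le N}\frac1a\Big)^{l-1}\le N(1+\log N)^{l-1},
\]
and the second inequality follows from $1+\log N\le 2\log N$. For the fourth bound, every factorisation of $n\le N$ has all $a_i\le N$, so
\[
\sum_{n\le N}\tau_l(n)/n\le\Big(\sum_{a\le N}1/a\Big)^l\le(1+\log N)^l\le(2\log N)^l.
\]

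\emph{Second bound.} We first show $\tau_l(n)^2\le\tau_{l^2}(n)$. Both sides are multiplicative, so it suffices to check prime powers. There the inequality reads $\binom{k+l-1}{l-1}^2\le\binom{k+l^2-1}{l^2-1}$. This holds because the left side counts pairs of $l$-tuples $(e_i),(f_j)$ of nonnegative integers each summing to $k$. Such a pair can be injected into $l\times l$ nonnegative integer matrices with total sum $k$: fill the matrix greedily, in the manner of a northwest-corner transportation plan, so that row sums are $e_i$ and column sums are $f_j$. The right side counts all such matrices. Applying the first bound with $l^2$ in place of $l$ then gives
\[
\sum_{n\le N}\tau_l(n)^2\le\sum_{n\le N}\tau_{l^2}(n)\le N(2\log N)^{l^2-1}.
\]

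\emph{Third bound.} Put $L=(2\log N)^{l^2-1}$ and $T(t)=\sum_{n\le t}\tau_l(n)^2$. We need $T(t)\le tL$ for all $1\le t\le N$, not only for $t=N$. For $t\ge 3$ this follows from the second bound applied at $t$, since $\log t\le\log N$. For $1\le t<3$ we check directly: $T(t)\le 1+l^2$, and $L\ge(2\log 3)^{l^2-1}\ge 2.19^{l^2-1}\ge l^2$.

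Abel summation then gives
\[
\sum_{A\le n\le N}\frac{\tau_l(n)^2}{n^2}=\frac{T(N)}{N^2}-\frac{T(A-1)}{A^2}+\sum_{A\le n<N}T(n)\Big(\frac1{n^2}-\frac1{(n+1)^2}\Big).
\]
The first term is at most $L/N\le L/A$, and the second term is nonpositive. In the last sum, $\frac1{n^2}-\frac1{(n+1)^2}\le\frac{2}{n^3}$, so it is at most $2L\sum_{n\ge A}n^{-2}\le 2L\big(\frac1{A^2}+\frac1A\big)\le\frac{4L}{A}$. Altogether the sum is at most $5L/A$, as claimed.
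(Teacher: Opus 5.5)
Your proof is correct and follows the paper's route: the fourth bound is the same product-of-harmonic-sums argument, and the third is derived from the second by summation by parts, as the paper indicates (your separate check of $T(t)\le tL$ for $t<3$, where the second bound is not stated, is a detail the paper leaves implicit). For the first two bounds the paper simply cites \cite{BNR}, and your self-contained arguments are the standard proofs of those cited facts: the lattice-point count, and the pointwise inequality $\tau_l(n)^2\le\tau_{l^2}(n)$ via the northwest-corner injection, which is injective because the matrix's row and column sums recover the pair.
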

\begin{proof}
The first inequality is \cite[Lemma~3.1]{BNR}, and the second is \cite[(3.4)]{BNR}.
The third follows from the second by summation by parts.
The last is
\(\sum_{n\leq N}\tau_l(n)/n\leq\bigl(\sum_{n\leq N}n^{-1}\bigr)^l\leq(1+\log N)^l\leq(2\log N)^l\).
\end{proof}

\subsection{Normalised and unnormalised moments}
We now consider how to compute the $2s$-th moment of the partial sum $S_N$.
Set
\[
U_s(N)=\mathbb{E}\Big|\sum_{n\leq N}f(n)\,e(g(n))\Big|^{2s},
\qquad
\mathcal{M}_s(N)=N^{-s}U_s(N)=\mathbb{E}|S_N|^{2s},
\]
where $\mathcal{M}_s(N)$ is the normalised moment.
It is a natural idea to transform the problem of computing the $2s$-th moment of $S_N$ to the computation of the exponential sum over some structured set, which is a purely arithmetic problem.
For $N\geq 1$, set
\[
V=\{(n_1,\ldots,n_s,m_1,\ldots,m_s)\in [1,N]^{2s}\cap\mathbb{Z}^{2s} : n_1\cdots n_s=m_1\cdots m_s\}.
\]
This set $V$ is the playground. Moreover, we need the following notation.
  For any subset $\mathcal{F}\subseteq V$, denote by $\Sg{\mathcal{F}}$ the sum of the exponential function over $\mathcal{F}$, that is,
\[
\Sg{\mathcal{F}}
=\sum_{(\vec{n}_s,\vec{m}_s)\in\mathcal{F}}\ \gprod.
\]
With the above notation, we have the following moment formula.
\begin{lemma}[Moment formula]\label{lem:moment-formula}
$U_s(N)=\Sg{V}$.
\end{lemma}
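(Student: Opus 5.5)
The plan is to expand the $2s$-th power, exchange expectation with the finite sum, and apply Steinhaus orthogonality. Write $T=\sum_{n\le N}f(n)e(g(n))$. Since $|T|^{2s}=T^s\,\overline{T}^s$, expanding both products gives
\[
|T|^{2s}=\sum_{\substack{n_1,\ldots,n_s\le N\\ m_1,\ldots,m_s\le N}}
f(n_1)\cdots f(n_s)\,\overline{f(m_1)\cdots f(m_s)}\ \gprod .
\]
The sum is finite, so expectation passes through it. The phases $e(g(n_i))$ and $e(g(m_i))$ are deterministic and come out of the expectation. It remains to evaluate $\mathbb{E}\bigl[f(n_1)\cdots f(n_s)\overline{f(m_1)\cdots f(m_s)}\bigr]$.

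By complete multiplicativity, $f(n_1)\cdots f(n_s)=f(n_1\cdots n_s)$, and likewise for the $m_i$. The expectation therefore equals $\mathbb{E}[f(a)\overline{f(b)}]$ with $a=n_1\cdots n_s$ and $b=m_1\cdots m_s$. We justify the orthogonality relation $\mathbb{E}[f(a)\overline{f(b)}]=\mathbf 1_{a=b}$ quoted in the introduction. Write $a=\prod_p p^{\alpha_p}$ and $b=\prod_p p^{\gamma_p}$. Then
\[
f(a)\overline{f(b)}=\prod_p f(p)^{\alpha_p-\gamma_p},
\]
using $|f(p)|=1$. Independence factorises the expectation over primes. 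For $X$ uniform on the unit circle, $\mathbb{E}X^k=\mathbf 1_{k=0}$. So the product is $1$ exactly when $\alpha_p=\gamma_p$ for every $p$, which by unique factorisation means $a=b$, and it is $0$ otherwise.

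Hence only tuples with $n_1\cdots n_s=m_1\cdots m_s$ survive, each with weight $1$. These tuples are exactly the elements of $V$, so $U_s(N)=\Sg{V}$. None of the steps is a real obstacle. The only point needing a check is the factorisation of the expectation over primes, which involves finitely many independent variables and is therefore immediate.
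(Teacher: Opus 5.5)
Your proposal is correct and follows the paper's route. The paper's proof is a one-line appeal to Steinhaus orthogonality $\mathbb{E}[f(n)\overline{f(m)}]=\mathbf{1}_{n=m}$, applied after implicitly expanding the $2s$-th power and using complete multiplicativity; you spell out that expansion and also verify the orthogonality relation itself via independence and $\mathbb{E}X^k=\mathbf{1}_{k=0}$.
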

\begin{proof}
This identity is a direct consequence of the orthogonality of $f$, that is, for all natural numbers $n$ and $m$,
\[
\mathbb{E}\bigl[f(n)\overline{f(m)}\bigr]=\mathbf{1}_{n=m}.
\]
\end{proof}

This lemma says that the computation of the $2s$th moment of the partial sum 
can be reduced to the computation of 
the exponential sum over the set $V$. Hence the structure of the set $V$ is crucial. We will study the structure of $V$ in the next section.

We prove Theorem~\ref{thm:main} by induction on~$s$.
When $s=1$, the condition defining $V$ is simply $n_1=m_1$. Lemma~\ref{lem:moment-formula} therefore gives $U_1(N)=N$ and $\mathcal{M}_1(N)=1$. Write $\mathcal{H}(1)$ for this identity. This is the base of the induction.
Next we give the inductive statement.
\subsection{The inductive statement}
For an integer $s\geq 2$, write $\mathcal{H}(s)$ for the following proposition.
For every integer $d\geq 1$, there exists a constant $c_0=c_0(d,s)>0$ such that the following holds.
For every polynomial $g$ of degree~$d$, for all integers $N$ large enough depending only on $d$ and~$s$, and for every $N^{-c_0}<\delta<1/8$, at least one of the following holds:
\begin{enumerate}
    \item \[
    \bigl|\mathcal{M}_s(N)-s!\bigr|
    \ll_{d,s} \delta(\log N)^{O(s^2)}.
    \]
    \item There exists $q\in\mathbb{Z}\setminus\{0\}$ with $|q|\ll_d\delta^{-O_d(1)}$ such that
    \[
    \|qg\|_{C^{\infty}[N]}\ll_d\delta^{-O_d(1)}.
    \]
\end{enumerate}
Thus $\mathcal{H}(s)$ is exactly the conclusion of Theorem~\ref{thm:main} at moment order~$2s$.  
\section{The inclusion--exclusion decomposition}
\label{sec:inclusion-exclusion}

From the previous section, we have reduced the problem of computing the $2s$th moment of the partial sum $S_N$ to the computation of the exponential sum over the set $V$.
We now describe the set $V$ and the inclusion--exclusion decomposition.
Firstly, it is easy to see that the solution set $V$ of the diophantine equation
\[
n_1\cdots n_s=m_1\cdots m_s
\]
contains the trivial solutions, in which \(\vec{m}_s\) is a rearrangement of \(\vec{n}_s\):
that is, \(n_i=m_{\sigma(i)}\) for \(i=1,\dots,s\), for some \(\sigma\in S_s\), which is the symmetric group on \(s\) letters.
On each such point, reindexing the product over \(m_i\) gives
\[
\prod_{i=1}^s e(g(n_i))\,\overline{e(g(m_i))}
=\prod_{i=1}^s e(g(n_i))\,\overline{e(g(m_{\sigma(i)}))}
=\prod_{i=1}^s e(g(n_i))\,\overline{e(g(n_i))}
=1,
\]
so each trivial solution contributes \(1\) to \(U_s(N)\).
It can be shown that the number of distinct trivial solutions is \(s!N^s+O_s(N^{s-1})\).
Since each contributes \(1\), this is the main term of \(U_s(N)\).

It is hard to directly compute the exponential sum over
the non-trivial solutions in $V$.
This forces us to study the structure of~$V$.

The set $V$ exhibits a lot of symmetry,
since it is invariant under the action of the symmetric group $S_s$ on the indices $1,\dots,s$.
The first step of this work is to decompose the set $V$
into structured sets by making use of this symmetry.
We now describe this decomposition.

Let $A>0$ (to be chosen later).  For $i=1,\ldots,s$,
define
\[
\mathscr{F}_i=\{(\vec{n}_s,\vec{m}_s)\in V:\ \gcd(n_1,m_i)\geq A\}.
\]
We let \(\mathscr{F}_i\) implicitly depend on the parameter \(A\).
The above definition loosely captures the information of the greatest common divisors of $n_1$ and $m_i$ as $i$ varies.
The reason we require the greatest common divisor 
to be larger than $A$ is to guarantee that the effective recurrence argument of \cite[\S3]{GTmob} can be applied. 
Each $\mathscr{F}_i$ contains the solutions where $n_1=m_i$, which will 
contribute to the main term. The structure is easier when we consider the exponential sum over 
each $\mathscr{F}_i$ separately.
We are able to approximate the whole solution set $V$ by the union of the $\mathscr{F}_i$ up to a negligible sparse complement, which is the content of the following lemma.
\begin{lemma}[Sparse complement]\label{lem:sparse-complement}
Let \(s\geq 2\) and \(N\geq 3\) be integers, and let \(A\) be a positive integer. With \(V\) and \(\mathscr{F}_i\) as above,
\[
\#(V\setminus\cup_{i=1}^s\mathscr{F}_i)\leq N^{s-1}A^{2s}(2s\log N)^{(s+1)^2}.
\]
\end{lemma}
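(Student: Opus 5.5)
We bound the complement by a counting argument built on the gcd factorisation of $n_1$. Take a point $(\vec n_s,\vec m_s)\in V\setminus\cup_i\mathscr{F}_i$. Then $\gcd(n_1,m_i)<A$ for every $i$.

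\emph{Step 1: $n_1$ is small.} Since $n_1\mid m_1\cdots m_s$, the multiplicativity of divisibility gives
\[
n_1\mid \prod_{i=1}^s\gcd(n_1,m_i)
\]
(peel off $a_1=\gcd(n_1,m_1)$, then $n_1/a_1$ divides $m_2\cdots m_s\cdot(m_1/a_1)$; more cleanly, use $n_1\mid\prod_i\gcd(n_1,m_i)$, which holds for any $n_1\mid\prod m_i$). Hence $n_1<A^s$.

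\emph{Step 2: a recursive decomposition.} We now count the points, and we want more than the bound $n_1<A^s$. The idea is to factor the equation $n_1\cdots n_s=m_1\cdots m_s$. Write $n_1=a_1\cdots a_s$ with $a_i\mid m_i$ and each $a_i\le \gcd(n_1,m_i)<A$. Such a factorisation exists by a greedy choice: let $a_1=\gcd(n_1,m_1)$, and then $a_i=\gcd(n_1/(a_1\cdots a_{i-1}),m_i)$. Put $m_i=a_im_i'$. The remaining equation is $n_2\cdots n_s=m_1'\cdots m_s'$, an equation with $s-1$ factors on the left and $s$ on the right.

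\emph{Step 3: the count.} Fix $n_2,\dots,n_s\le N$. The product $P=n_2\cdots n_s\le N^{s-1}$ determines the $m_i'$ in at most $\tau_s(P)$ ways. The $a_i<A$ give at most $A^s$ choices, and $n_1=\prod a_i$ is then determined. So the count is at most
\[
A^s\sum_{n_2,\dots,n_s\le N}\tau_s(n_2\cdots n_s)\le A^s\sum_{n_2,\dots,n_s\le N}\prod_{j\geq2}\tau_s(n_j).
\]
Here we use $\tau_s(ab)\le\tau_s(a)\tau_s(b)$. Applying Lemma~\ref{lem:divisor} to each sum gives $\sum_{n\le N}\tau_s(n)\le N(2\log N)^{s-1}$. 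The total is therefore at most $A^sN^{s-1}(2\log N)^{(s-1)^2}$, which is comfortably within $N^{s-1}A^{2s}(2s\log N)^{(s+1)^2}$.

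\emph{The main obstacle} is making the factorisation step rigorous. One must check that the greedy $a_i$ really exhaust $n_1$, i.e.\ that $\prod a_i=n_1$. This follows prime by prime: for each $p$, $v_p(n_1)\le\sum_i v_p(m_i)$, so the greedy allocation uses up all of $v_p(n_1)$. One must also check that the map from the point to the data $(a_i,n_2,\dots,n_s,m_i')$ is injective, so that overcounting is harmless. Injectivity holds since $m_i=a_im_i'$ and $n_1=\prod a_i$. The slack in the stated bound, $A^{2s}$ and exponent $(s+1)^2$, leaves room for a cruder version. For instance, one can bound the number of choices of $(m_i)$ given $P$ by $\tau_s(n_1P)$, use $\tau_s(n_1)\le n_1\le A^s$, and apply Cauchy--Schwarz with the $\tau_l^2$ bound of Lemma~\ref{lem:divisor}. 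Either route suffices.
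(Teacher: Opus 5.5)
Your argument is correct, and it takes a genuinely different and sharper route than the paper.

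\textbf{The paper's route.} It keeps only the consequence $n_1\le A^s$. It fixes $n_1=g$ and bounds the fibre by $\sum_{n\le N^{s-1}}\tau_s(gn)\tau_{s-1}(n)\le gN^{s-1}(2s\log N)^{s^2}$, using Cauchy--Schwarz and the square-sum bound of Lemma~\ref{lem:divisor}, so it pays an extra factor of $g$ inside the fibre. It then sums over $g\le A^s$. That is why $A^{2s}$ and the exponent $(s+1)^2$ appear.

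\textbf{Your route.} You split $n_1$ greedily among the $m_i$ into pieces $a_i\mid m_i$ with $a_i\le\gcd(n_1,m_i)<A$. Cancelling $n_1$ then leaves free variables $n_2,\dots,n_s\le N$ together with an ordered $s$-factorisation of their product. Submultiplicativity of $\tau_s$ factors the count, and only the first-moment bound $\sum_{n\le N}\tau_s(n)\le N(2\log N)^{s-1}$ is needed. You get $A^sN^{s-1}(2\log N)^{(s-1)^2}$, which is stronger than the statement.

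Your prime-by-prime check that the greedy pieces exhaust $n_1$, via $v_p(n_1)\le\sum_i v_p(m_i)$, is exactly what is needed, and so is your injectivity check. Once you have $a_i<A$, your Step~1 is redundant.

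\textbf{Trade-off.} The paper's version is quicker to write, since there is no factorisation to verify. Its losses are harmless in context, because $A=\delta^{-O_d(1)}$ with $\delta>N^{-c_0}$.

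\textbf{A correction to your closing aside.} The claim $\tau_s(n_1)\le n_1$ is false in general: $\tau_s(2)=s>2$ for $s\geq 3$. That ``cruder'' alternative would need repairing. One fix, as in the paper, is to bound $\sum_{n\le N^{s-1}}\tau_s(n_1n)^2$ by $\sum_{m\le n_1N^{s-1}}\tau_s(m)^2$ rather than extracting $\tau_s(n_1)$. This does not affect your main argument.
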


\begin{proof}
If $(n_1,\ldots,n_s,m_1,\ldots,m_s)\in V\setminus\cup_i\mathscr{F}_i$, then $\gcd(n_1,m_j)<A$ for all $j=1,\ldots,s$.
Since $n_1\mid m_1\cdots m_s$, one has $n_1\mid\prod_{j=1}^s\gcd(n_1,m_j)$, hence $n_1\leq A^s$.

Fix $n_1=g$.  The equation $n_1\cdots n_s=m_1\cdots m_s$ is equivalent to $m_1\cdots m_s=gn$ for some $1\leq n\leq N^{s-1}$ (writing $n=n_2\cdots n_s$).
For each such $n$, the number of $(n_2,\ldots,n_s,m_1,\ldots,m_s)$ with $n_2\cdots n_s=n$ and $m_1\cdots m_s=gn$ is at most $\tau_s(gn)\tau_{s-1}(n)$.
Thus the fibre over $n_1=g$ has size at most
\[
\sum_{n\leq N^{s-1}}\tau_s(gn)\tau_{s-1}(n).
\]
By Cauchy--Schwarz and the square-sum bound in Lemma~\ref{lem:divisor},
together with \(\log(gN^{s-1})\leq s\log N\) for \(g\leq N\),
\[
\sum_{n\leq N^{s-1}}\tau_s(gn)\tau_{s-1}(n)
\leq gN^{s-1}(2s\log N)^{s^2}.
\]
Summing over $g\leq A^s$ and using \(s^2\leq(s+1)^2\) gives
\[
\#(V\setminus\cup_i\mathscr{F}_i)
\leq \sum_{g\leq A^s} gN^{s-1}(2s\log N)^{s^2}
\leq N^{s-1}A^{2s}(2s\log N)^{(s+1)^2}.
\]
\end{proof}

By inclusion--exclusion,
\begin{equation}\label{eq:IE-general}
\sum_{\bigcup_i\mathscr{F}_i}
= \sum_i\sum_{\mathscr{F}_i} - \sum_{i<j}\sum_{\mathscr{F}_i\cap\mathscr{F}_j} + \cdots
+ (-1)^{s-1}\sum_{\mathscr{F}_1\cap\cdots\cap\mathscr{F}_s}.
\end{equation}
\label{lem:inclusion-exclusion}
Hence summation over the union $\bigcup_i\mathscr{F}_i$ can be decomposed into the sum over the single sets $\mathscr{F}_i$ and the sum over the intersections $\mathscr{F}_{i_1}\cap\cdots\cap\mathscr{F}_{i_\kappa}$.
Next we explain the strategy to prove Theorem \ref{thm:main}.
\subsection{Proof strategy}
\label{sec:proof-strategy}
We prove Theorem~\ref{thm:main} by induction on \(s\geq 2\). Lemma~\ref{lem:sparse-complement} shows that the complement of \(\bigcup_i\mathscr{F}_i\) in \(V\) is negligible, so the sum over \(V\) reduces to the sum over the union. By the inclusion--exclusion above, this sum splits into the single sets \(\mathscr{F}_i\) and the intersections \(\mathscr{F}_{i_1}\cap\cdots\cap\mathscr{F}_{i_\kappa}\). In Section~\ref{sec:F1}, we estimate the single sets, and in Section~\ref{sec:intersection}, we estimate the intersections.

For both estimates, it is convenient to separate the diagonal and off-diagonal parts.
For a single set \(\mathscr{F}_i\), the diagonal part is the subset on which \(n_1=m_i\), and the off-diagonal part is
\[
\mathscr{F}_i^\circ:=\mathscr{F}_i\setminus\{n_1=m_i\}.
\]
For an intersection \(\mathscr{F}_{i_1}\cap\cdots\cap\mathscr{F}_{i_\kappa}\), the diagonal part is the subset on which \(n_1=m_{i_1}\), and the off-diagonal part is
\[
(\mathscr{F}_{i_1}\cap\cdots\cap\mathscr{F}_{i_\kappa})^\circ
:=(\mathscr{F}_{i_1}\cap\cdots\cap\mathscr{F}_{i_\kappa})\setminus\{n_1=m_{i_1}\}.
\]
Figure~\ref{fig:roadmap} records these two off-diagonal sets.

The main term comes from the diagonal part of each \(\mathscr{F}_i\), by the inductive hypothesis \(\mathcal{H}(s-1)\), and the rest contributes no main term. The diagonal part of each intersection is controlled by the divisor bound. The off-diagonal parts of both \(\mathscr{F}_i\) and the intersections are explained below, and this is the most technical part of the paper.

\begin{figure}[ht]
\centering
\resizebox{0.88\textwidth}{!}{%
\begin{tikzpicture}[
  font=\small,
  >=Stealth,
  arr/.style={->, thick, draw=figink},
  box/.style={draw=figink, rounded corners=2.5pt, fill=figplane},
]
  % ----- left: tighter frame; flower above a label that stays inside -----
  \coordinate (Vc) at (0,0.24);
  \node[box, minimum width=2.62cm, minimum height=2.96cm] (Vbox) at (0,0) {};
  \node[above=2pt, font=\normalsize\bfseries] at (Vbox.north) {$V$};
  \foreach \ang in {90,30,-30,-90,-150,150} {
    \fill[plum!32, opacity=0.62, rotate around={\ang:(Vc)}]
      ($(Vc)+(0,0.38)$) ellipse (0.38 and 0.62);
  }
  \foreach \ang in {90,30,-30,-90,-150,150} {
    \draw[figink!75, line width=0.55pt, rotate around={\ang:(Vc)}]
      ($(Vc)+(0,0.38)$) ellipse (0.38 and 0.62);
  }
  \fill[gold] (Vc) circle (0.22);
  \draw[figink!75, line width=0.55pt] (Vc) circle (0.22);
  \node[font=\tiny, anchor=south, inner sep=0pt] at (0,-1.34) {$\bigcup_i\mathscr{F}_i$};

  % ----- middle: one petal, and the intersection of three sets -----
  \node[box, minimum width=2.15cm, minimum height=1.42cm] (Fi) at (4.08,0.82) {};
  \fill[plum!34] ($(Fi.center)+(0,0.16)$) ellipse (0.32 and 0.42);
  \draw[figink!75, line width=0.55pt] ($(Fi.center)+(0,0.16)$) ellipse (0.32 and 0.42);
  \node[font=\tiny, anchor=north, inner sep=0pt] at ($(Fi.center)+(0,-0.42)$) {$\mathscr{F}_i^\circ$};
  \node[above=1pt, font=\scriptsize, inner sep=1pt] at (Fi.north) {off-diagonal};

  \node[box, minimum width=2.52cm, minimum height=1.72cm] (Fint) at (4.26,-0.92) {};
  \coordinate (Ic) at ($(Fint.center)+(0,0.14)$);
  \foreach \ang in {90,210,330} {
    \fill[plum!34]
      ($(Ic)+({0.22*cos(\ang)},{0.22*sin(\ang)})$) circle (0.40);
  }
  \begin{scope}
    \clip ($(Ic)+({0.22*cos(90)},{0.22*sin(90)})$) circle (0.40);
    \clip ($(Ic)+({0.22*cos(210)},{0.22*sin(210)})$) circle (0.40);
    \fill[gold] ($(Ic)+({0.22*cos(330)},{0.22*sin(330)})$) circle (0.40);
  \end{scope}
  \foreach \ang in {90,210,330} {
    \draw[figink!75, line width=0.5pt]
      ($(Ic)+({0.22*cos(\ang)},{0.22*sin(\ang)})$) circle (0.40);
  }
  \node[font=\tiny, anchor=north, inner sep=0pt]
    at ($(Fint.center)+(0,-0.58)$)
    {$(\mathscr{F}_{i_1}\cap\cdots\cap\mathscr{F}_{i_\kappa})^\circ$};

  \draw[arr] ($(Vbox.east)+(0,0.48)$) -- ++(0.48,0) |- (Fi.west);
  \draw[arr] ($(Vbox.east)+(0,-0.48)$) -- ++(0.48,0) |- (Fint.west);

  % ----- top-right: parameters -----
  \node[box, inner sep=4pt, align=center, minimum width=3.15cm]
    (Par) at (8.58,0.82)
    {$\mathbb{Z}_{>0}^{s^{2}}$\ \ or\ \ $\mathbb{Z}_{>0}^{3s}$};

  \draw[arr] (Fi.east) -- node[above=4pt, font=\tiny, inner sep=0.5pt]{parameterisation} (Par.west);

  % ----- bottom-right: good and bad fibres in space -----
  \node[box, minimum width=3.15cm, minimum height=2.05cm]
    (Fib) at (8.58,-1.32) {};
  \begin{scope}[shift={(Fib.center)}]
    \coordinate (O) at (-1.05,-0.72);
    \coordinate (U) at (1.95,0.15);
    \coordinate (V) at (-0.46,0.30);
    \def\ah{0.20}
    \fill[white] (O) -- ++(U) -- ++(V) -- ++($-1*(U)$) -- cycle;
    \draw[figink] (O) -- ++(U) -- ++(V) -- ++($-1*(U)$) -- cycle;
    \foreach \s/\t/\len in {0.14/0.72/0.42, 0.26/0.28/0.55, 0.36/0.55/0.48, 0.18/0.10/0.36} {
      \coordinate (P) at ($(O)+\s*(U)+\t*(V)$);
      \coordinate (Bot) at ($(P)+(0,\ah)$);
      \coordinate (Top) at ($(Bot)+(0,\len)$);
      \draw[figink!40, dotted] (P) -- (Bot);
      \draw[figbluepale, line width=1.35pt] ($(Bot)+(0.03,0)$) -- ($(Top)+(0.03,0)$);
      \draw[plum, line width=0.9pt] (Bot) -- (Top);
      \fill[plum] (Bot) circle (0.8pt);
      \fill[plum] (Top) circle (0.5pt);
    }
    \foreach \s/\t/\len in {0.58/0.70/0.40, 0.72/0.26/0.52, 0.84/0.52/0.44, 0.64/0.08/0.34} {
      \coordinate (P) at ($(O)+\s*(U)+\t*(V)$);
      \coordinate (Bot) at ($(P)+(0,\ah)$);
      \coordinate (Top) at ($(Bot)+(0,\len)$);
      \draw[figink!40, dotted] (P) -- (Bot);
      \draw[gold, line width=1.25pt, dash pattern=on 2.2pt off 1.05pt] (Bot) -- (Top);
      \fill[gold] (Bot) circle (0.8pt);
      \fill[gold] (Top) circle (0.5pt);
    }
    \node[font=\scriptsize, text=plum, anchor=south]
      at ($(O)+0.24*(U)+0.40*(V)+(0,\ah)+(0,0.66)$) {good};
    \node[font=\scriptsize, text=goldink, anchor=south]
      at ($(O)+0.74*(U)+0.36*(V)+(0,\ah)+(0,0.64)$) {bad};
  \end{scope}

  \coordinate (Pin) at ($(Par.south)+(-0.68,0)$);
  \coordinate (Gap) at ($(Fib.north)!0.5!(Par.south)$);
  \draw[arr] (Fint.east) -- ++(0.85,0) coordinate (Turn)
    -- (Turn |- Gap) -- (Pin |- Gap) -- (Pin);
  \draw[arr] (Par.south) -- node[right=2pt, font=\scriptsize, inner sep=1pt]{projection} (Fib.north);
\end{tikzpicture}%
}
\caption{The proof strategy}
\label{fig:roadmap}
\end{figure}
The two off-diagonal sets in Figure~\ref{fig:roadmap} are treated by the same sequence of steps. Parameterisation lemmas place the solutions in \(\mathbb{Z}_{>0}^{s^2}\) or in \(\mathbb{Z}_{>0}^{3s}\). A projection then forgets some of these coordinates, leaving a base together with a fibre. The base points are split into good and bad according to the size of the exponential sum over the fibre. 
The number of bad base points is controlled by
 the Diophantine dichotomy of Green--Tao, explained in the next section.
\section{Diophantine dichotomy}
\label{sec:diophantine}
This section explains the diophantine dichotomy result needed for the proofs.
We
prove Lemma~\ref{lem:diophantine-nil}  following the
argument of \cite[\S3]{GTmob}. The main input is the quantitative Leibman theorem of Green--Tao \cite[Theorem~2.9, Proposition~4.3]{BT}, specialised to polynomials on the circle, which is a quantitative Weyl theorem.

Our notion of smoothness norm is slightly different from the one used in Green--Tao.
They \cite[Definition~2.7]{BT} use the binomial basis and the norm
\[
g(n)=\sum_{j=0}^d \alpha_j\binom{n}{j},
\qquad
\|g\|_{C_{\mathrm{GT}}^\infty[N]}
:=\sup_{1\leq j\leq d}\, N^j\,\|\alpha_j\|_{\mathbb{R}/\mathbb{Z}}.
\]
By a change of basis, there is an integer $q_d$ depending only on $d$ such that
\[
\|q_d g\|_{C^\infty[N]} \ll_d \|g\|_{C_{\mathrm{GT}}^\infty[N]}.
\]
So the following statement does not change if we use the monomial basis instead.

\begin{theorem}[{Green--Tao \cite[Theorem~2.9, Proposition~4.3]{BT}, for polynomials on the circle}]
\label{prop:quantitative-weyl-circle}
\label{prop:gt-weyl}
Let \(p\colon\mathbb Z\to\mathbb R/\mathbb Z\) be a polynomial of degree at
most \(d\), let \(M\geq1\), and let \(0<\delta<1/2\).  Then at least one of
the following holds:
\begin{enumerate}
    \item The sequence \((p(n))_{n=1}^M\) is
    \(\delta\)-equidistributed in \(\mathbb R/\mathbb Z\), in the sense that
    \[
    \left|\frac1M\sum_{n=1}^M F(p(n))
    -\int_{\mathbb R/\mathbb Z}F\right|
    \leq\delta\|F\|_{\mathrm{Lip}}
    \]
    for every Lipschitz function
    \(F\colon\mathbb R/\mathbb Z\to\mathbb C\).
    \item There exists a nonzero integer \(k\) such that
    \[
    |k|\ll_d\delta^{-O_d(1)},
    \qquad
    \|kp\|_{C^\infty[M]}
    \ll_d\delta^{-O_d(1)}.
    \]
\end{enumerate}
\end{theorem}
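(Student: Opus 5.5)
This statement is cited from Green--Tao, so the plan is to derive it from \cite[Theorem~2.9, Proposition~4.3]{BT} specialised to the abelian nilmanifold $G/\Gamma=\mathbb{R}/\mathbb{Z}$. The argument has three steps: check that the hypotheses specialise, translate the conclusion from horizontal characters to integers, and convert between the binomial and monomial norms.

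\textbf{Step 1: setting up the specialisation.} Take $G=\mathbb{R}$ with the trivial filtration of degree $d$, namely $G_0=G_1=\cdots=G_d=\mathbb{R}$ and $G_{d+1}=\{0\}$, and take $\Gamma=\mathbb{Z}$. The Mal'cev basis is the single vector $1$, which is $O(1)$-rational, so the complexity is $O(1)$. Any polynomial $p\colon\mathbb{Z}\to\mathbb{R}/\mathbb{Z}$ of degree at most $d$ lifts to a real polynomial, which is a polynomial sequence in $\mathrm{poly}(\mathbb{Z},G_\bullet)$. With this setup, \cite[Theorem~2.9]{BT} says that if $(p(n))_{n\le M}$ is not $\delta$-equidistributed, then there is a nontrivial horizontal character $\eta$ with $\|\eta\|\ll\delta^{-O_d(1)}$ and $\|\eta\circ p\|_{C^\infty_{\mathrm{GT}}[M]}\ll\delta^{-O_d(1)}$.

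\textbf{Step 2: translating the conclusion.} Every horizontal character on $\mathbb{R}/\mathbb{Z}$ has the form $\eta(x)=kx$ with $k\in\mathbb{Z}\setminus\{0\}$, and its modulus is $|k|$. The conclusion of Step~1 therefore becomes $|k|\ll_d\delta^{-O_d(1)}$ and $\|kp\|_{C^\infty_{\mathrm{GT}}[M]}\ll_d\delta^{-O_d(1)}$. The definition of equidistribution used in \cite{BT}, with Lipschitz test functions and the bound $\delta\|F\|_{\mathrm{Lip}}$, matches alternative~(1) exactly. \cite[Proposition~4.3]{BT} is the one-dimensional Weyl case, which is proved directly by van der Corput. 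It can be used in place of Theorem~2.9 and gives polynomial bounds.

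\textbf{Step 3: passing to the monomial norm.} Let $q_d=d!$. For the coefficients of $kp$ in the two bases, the monomial coefficient $\beta_j$ is an integer combination of the binomial coefficients $\alpha_i$ with $i\ge j$, divided by $j!$. Multiplying by $q_d$ clears these denominators, so $\|q_d\beta_j\|\le\sum_{i\ge j}O_d(1)\|\alpha_i\|$. Since $N^j\le N^i$ for $i\ge j$, this gives $\|q_dkp\|_{C^\infty[M]}\ll_d\|kp\|_{C^\infty_{\mathrm{GT}}[M]}$. Taking $k'=q_dk$ then yields alternative~(2), with $|k'|\ll_d\delta^{-O_d(1)}$.

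The main obstacle is only bookkeeping. One must confirm that the hypothesis $\delta<1/2$ and the normalisation conventions of \cite{BT}, such as totally versus plain equidistribution and the constant term, match the statement here. Any loss in these conventions is absorbed into the $O_d(1)$ exponents.
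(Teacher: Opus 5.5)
Your proposal is correct and follows the paper, which likewise just invokes \cite[Theorem~2.9, Proposition~4.3]{BT} on $\mathbb{R}/\mathbb{Z}$ (where horizontal characters are $x\mapsto kx$ with $k\neq 0$) and then converts norms via an integer $q_d$ with $\|q_d g\|_{C^\infty[N]}\ll_d\|g\|_{C^\infty_{\mathrm{GT}}[N]}$; this is exactly your Steps~1--3. One small slip in Step~3: $\binom{n}{i}$ carries the denominator $i!$, so one has $\beta_j=\sum_{i\ge j}c_{ij}\alpha_i/i!$ with integers $c_{ij}$ (signed Stirling numbers), not an integer combination divided by $j!$; but $q_d=d!$ still clears every denominator, so your conclusion is unaffected.
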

This dichotomy says that, either the polynomial sequence
is equidistributed or the polynomial is non-diophantine. Moreover, this result is quantitative, making it convenient for our application.

Several more ingredients from their work are used.
We will also use the following effective recurrence lemma.
\begin{lemma}[{Effective recurrence, Green--Tao \cite[Lemma~3.2]{BT}}]
\label{lem:bt-recurrence}
Let \(\alpha\in\mathbb R\), let \(0<\sigma<1/2\), and let \(0<\varepsilon\le\sigma/2\).
Let \(I\subseteq\mathbb R/\mathbb Z\) be an interval of length \(\varepsilon\), and let \(N\ge 1\).
Suppose that \(\alpha n\in I\) for at least \(\sigma N\) integers \(n\) with \(1\le n\le N\).
Then there exists \(k\in\mathbb Z\) with \(0<|k|\ll\sigma^{-O(1)}\) such that
\[
\|k\alpha\|_{\mathbb R/\mathbb Z}\ll\varepsilon\,\sigma^{-O(1)}/N.
\]
\end{lemma}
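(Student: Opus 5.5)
The plan is to find a small $k$ by a pigeonhole argument on differences of return times, and then to upgrade the estimate $\|k\alpha\|\le(\text{small})$ to $\|k\alpha\|\ll\varepsilon\sigma^{-O(1)}/N$ by counting how often a slow rotation can visit an interval. Write $E=\{1\le n\le N:\ \alpha n\in I\}$, so $\#E\ge\sigma N$.

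\emph{Step 1 (locate a short shift $h$).} Split $I$ into $\lceil 10/\sigma\rceil$ subintervals of length at most $\varepsilon\sigma/10$. By pigeonhole, one of them, $J$, contains $\alpha n$ for all $n$ in a set $E'\subseteq E$ with $\#E'\ge\sigma^2N/20$ (for $N$ large; small $N$ is trivial, taking $k=1$ and using $\|\alpha\|\le 1/2\ll\varepsilon\sigma^{-O(1)}/N$). List $E'$ in increasing order. Its consecutive gaps sum to at most $N$, so at least half of these $\#E'-1$ gaps have length at most $h_0:=\lceil 80/\sigma^{2}\rceil$. Pigeonholing over the gap length, we obtain some $1\le h\le h_0$ with $n,n+h\in E'$ for at least one (indeed $\gg\sigma^4N$) values of $n$. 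For any such pair, $\alpha(n+h)-\alpha n=h\alpha$ is a difference of two points of $J$. Let $\beta\in(-1/2,1/2]$ be the representative of $h\alpha$ modulo $1$; then $|\beta|\le\varepsilon\sigma/10$.

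\emph{Step 2 (slow rotation along a progression).} Split $[1,N]$ into the $h$ residue classes modulo $h$; each meets $[1,N]$ in $M\asymp N/h$ terms. By averaging, some class $r$ has at least $\sigma M$ (up to an $O(h)$ error) indices $j\in[0,M)$ with $\alpha r+\beta j\in I$. If $\beta=0$ we are done with $k=h$. Otherwise the sequence $\beta j$ moves monotonically around the circle in steps of size $|\beta|$. It winds around at most $|\beta|M+1$ times, and on each winding it spends at most $\varepsilon/|\beta|+1$ steps inside the translate $I-\alpha r$. Hence the number of hits is at most
\[
(|\beta|M+1)\Bigl(\frac{\varepsilon}{|\beta|}+1\Bigr)=\varepsilon M+|\beta|M+\frac{\varepsilon}{|\beta|}+1 .
\]

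\emph{Step 3 (conclude).} By Step 1 and the hypothesis $\varepsilon\le\sigma/2$, we have $\varepsilon M+|\beta|M\le(\tfrac12+\tfrac{\sigma}{20})\sigma M$. So the lower bound $\sigma M$ from Step 2 forces $\varepsilon/|\beta|+1\gg\sigma M$, that is, $|\beta|\ll\varepsilon/(\sigma M)\ll\varepsilon h/(\sigma N)\ll\varepsilon\sigma^{-3}/N$. Taking $k=h$, which satisfies $0<k\ll\sigma^{-2}$, gives the claim.

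The main obstacle is the bookkeeping in Steps 2–3: the $O(h)$ boundary errors and the averaging over residue classes must not eat the margin $\sigma M-\varepsilon M-|\beta|M\gg\sigma M$. This margin is exactly why Step 1 refines $I$ to the shorter interval $J$ before taking differences, so that $|\beta|$ is a small multiple of $\varepsilon\sigma$. When $N\ll\sigma^{-O(1)}$, these errors are instead absorbed by the trivial bound.
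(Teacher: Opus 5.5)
The paper gives no proof of this lemma; it only cites Green--Tao. For $N\ge C\sigma^{-3}$ (with $C$ absolute) your Steps 1--3 give a correct self-contained proof by the standard pigeonhole-plus-slow-rotation route. Refining $I$ to $J$ makes $|\beta|\le\varepsilon\sigma/10$, so $\varepsilon M+|\beta|M\le(\tfrac12+\tfrac{\sigma}{20})\sigma M$. This leaves a margin $\gg\sigma M$ against the lower bound $\sigma(M-1)$ once $\sigma M\gg1$. The winding count then forces $|\beta|\ll\varepsilon h/(\sigma N)\ll\varepsilon\sigma^{-3}/N$. (The safe winding count is $|\beta|M+2$ rather than $|\beta|M+1$; this changes nothing.)

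The gap is your treatment of small $N$. You take $k=1$ and assert $\|\alpha\|_{\mathbb{R}/\mathbb{Z}}\le\tfrac12\ll\varepsilon\sigma^{-O(1)}/N$ when $N\ll\sigma^{-O(1)}$. That inequality needs $\varepsilon\gg\sigma^{O(1)}$, but the hypotheses only bound $\varepsilon$ from above. For example, take $\sigma=1/4$, $N=10$ (below your Step~1 threshold $40\sigma^{-2}$), $\alpha=1/2$, and $I$ a tiny interval around $0$. The hypothesis holds with five returns, yet $\|\alpha\|_{\mathbb{R}/\mathbb{Z}}=\tfrac12$ is not $\ll\varepsilon$ as $\varepsilon\to0$. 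Your closing remark that the boundary losses are ``absorbed by the trivial bound'' has the same defect.

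The fix is to use two return times. If $\sigma N>1$, pick $n_1<n_2$ in $E$ and set $k=n_2-n_1$. Then $0<k<N$ and $\|k\alpha\|_{\mathbb{R}/\mathbb{Z}}\le\varepsilon$. This is $\ll\varepsilon\sigma^{-3}/N$ throughout the range $N<C\sigma^{-3}$ that your main argument does not cover.

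When $\sigma N\le1$, only one return is guaranteed, and the statement as written is false, so no argument can cover that case. Take $N=1$, $\alpha$ irrational (say the golden ratio), and $I$ of length $\varepsilon$ centred at $\alpha$. For each fixed $K$ one has $\min_{0<|k|\le K}\|k\alpha\|_{\mathbb{R}/\mathbb{Z}}>0$, so letting $\varepsilon\to0$ contradicts the conclusion. The lemma must therefore be read with an implicit hypothesis such as $\sigma N>1$. This does hold where the paper applies it, in the proof of Lemma~\ref{lem:diophantine-nil}: there the range is $[tD^j]$ with $D>\delta^{-C}$ and $\sigma\gg_d\delta^{O_d(1)}$.
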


We will also use the following Waring-type count.
\begin{lemma}[{Green--Tao \cite[Lemma~3.3]{GTmob}}]
\label{lem:gt-waring}
Let \(j\ge 1\) and \(K\ge 1\) be integers, let \(0<\eta\le 1\), and let
\(S\subseteq[K]\) with \(|S|=\eta K\).  Let \(t\ge 2^j+1\).  Then
\(\gg_{j,t}\eta^{2t}K^j\) integers in the interval \([tK^j]\) can be written
in the form \(k_1^j+\cdots+k_t^j\) with \(k_1,\ldots,k_t\in S\).
\end{lemma}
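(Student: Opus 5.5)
The plan is a second-moment (Cauchy--Schwarz) argument on the representation function, with the necessary mean-value input taken from the classical Hardy--Littlewood method for Waring's problem. For $n\in\mathbb{Z}$ set
\[
r_S(n)=\#\{(k_1,\ldots,k_t)\in S^t:\ k_1^j+\cdots+k_t^j=n\}.
\]
Since $1\le k_i\le K$, every $n$ with $r_S(n)>0$ lies in $[tK^j]$. So it suffices to show that $\#\{n:\ r_S(n)>0\}\gg_{j,t}\eta^{2t}K^j$. The first-moment identity is trivial:
\[
\sum_n r_S(n)=|S|^t=\eta^tK^t .
\]
By Cauchy--Schwarz,
\[
\#\{n:\ r_S(n)>0\}\;\ge\;\frac{\bigl(\sum_n r_S(n)\bigr)^2}{\sum_n r_S(n)^2}
=\frac{\eta^{2t}K^{2t}}{\sum_n r_S(n)^2}.
\]
The whole problem is therefore reduced to the upper bound
\[
\sum_n r_S(n)^2\ll_{j,t}K^{2t-j}.
\]

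For that bound, first discard the set $S$. By positivity, $\sum_n r_S(n)^2$ counts solutions of
\[
k_1^j+\cdots+k_t^j=l_1^j+\cdots+l_t^j
\]
with all variables in $S$. This is at most the number $J_{t,j}(K)$ of such solutions with all variables in $[K]$. By orthogonality,
\[
J_{t,j}(K)=\int_0^1\Bigl|\sum_{k\le K}e(\alpha k^j)\Bigr|^{2t}d\alpha .
\]
This integral is the $2t$-th moment of a Weyl sum. The required estimate $J_{t,j}(K)\ll_{j,t}K^{2t-j}$ is the classical upper bound in Waring's problem: the expected order of magnitude without an $\varepsilon$-loss, valid once the number of variables $2t$ comfortably exceeds $2^j$. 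Here $2t\ge 2^{j+1}+2$, which is well within the range of Hua's lemma combined with the Hardy--Littlewood major/minor arc analysis.

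The main obstacle is exactly this no-$\varepsilon$-loss bound. Hua's lemma alone gives
\[
\int_0^1\Bigl|\sum_{k\le K}e(\alpha k^j)\Bigr|^{2^j}d\alpha\ll K^{2^j-j+\varepsilon},
\]
and bounding the remaining $2t-2^j$ factors trivially only yields $K^{2t-j+\varepsilon}$. That would produce a loss of $K^{-\varepsilon}$ in the final count, which is not acceptable. To remove it, I would run the standard circle-method argument.

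On the minor arcs, use Weyl's inequality to save a power $K^{-2^{1-j}+\varepsilon}$ on $\sup|f(\alpha)|$. Combined with Hua's lemma for the remaining $2^j$ powers, the spare $2t-2^j\ge 2^j+2$ copies then give $o(K^{2t-j})$.

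On the major arcs around $a/q$, use the standard approximation $f(\alpha)\approx q^{-1}S(q,a)v(\alpha-a/q)$. Here $S(q,a)\ll q^{1-1/j+\varepsilon}$ and $v(\beta)\ll\min(K,|\beta|^{-1/j})$. These make the singular series and singular integral converge absolutely for $2t>2j$, giving a contribution $\ll K^{2t-j}$.

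Equivalently, I would simply quote the Hardy--Littlewood asymptotic formula for the number of representations by $2t\ge 2^j+1$ $j$-th powers, as in Vaughan's book. For the equation with $t$ variables on each side, one obtains the upper bound by the same argument after absorbing the signs, since $|f(\alpha)|^{2t}=f(\alpha)^t\overline{f(\alpha)}^t$. Inserting this bound into the Cauchy--Schwarz inequality gives the claimed $\gg_{j,t}\eta^{2t}K^j$ representable integers in $[tK^j]$.
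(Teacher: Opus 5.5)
Your argument is correct: Cauchy--Schwarz on the representation function $r_S$ reduces the claim to the Waring mean-value bound $\int_0^1\bigl|\sum_{k\le K}e(\alpha k^j)\bigr|^{2t}\,d\alpha\ll_{j,t}K^{2t-j}$, which the circle method supplies since $2t>2^j$, and this second-moment structure is exactly what produces the exponent $\eta^{2t}$. The paper does not reprove this lemma and only cites it from Green--Tao, whose proof is, to my recollection, this same Cauchy--Schwarz reduction to a Waring-type estimate, so your proposal matches the standard argument.
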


With these inputs, we prove the next lemma formulated in this way for various adaptations.
\begin{lemma}[Exponential dichotomy]\label{lem:diophantine-nil}
There exists a constant $C_d>5$, depending only on $d$, with the
following property. Let $g$ be a real polynomial of degree~$d$. Let $0<\delta<1/8$, let $0<D<N$, and put
$N_D=\lfloor N/D\rfloor$. Suppose that
\[
\frac ND>\delta^{-C_d},
\]
and let $L=[a,b]\subseteq[1,2N_D]$ be an interval of integers with
\[
\frac14\cdot\frac ND<|L|<2\cdot\frac ND
\qquad\text{and}\qquad
a<\delta^{-C_d-3}.
\]
Then at least one of the following alternatives holds:
\begin{enumerate}
    \item
    \[
    \#\left\{1\leq c<D:
    \left|\sum_{n\in L}e\bigl(g(nc)-g(nD)\bigr)\right|
    >\delta\frac ND\right\}\ll_d\delta D.
    \]
    \item There is a nonzero integer $K$ such that
    \[
    |K|\ll_d\delta^{-C_d},\qquad
    \|Kg\|_{C^\infty[N]}\ll_d\delta^{-C_d}.
    \]
\end{enumerate}
\end{lemma}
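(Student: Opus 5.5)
We argue by contraposition, following \cite[\S3]{GTmob}. Suppose the first alternative fails. Then there is a set $\mathcal{C}\subseteq[1,D)$ with $\#\mathcal{C}\gg_d\delta D$ such that for each $c\in\mathcal{C}$
\[
\Bigl|\sum_{n\in L}e\bigl(g(nc)-g(nD)\bigr)\Bigr|>\delta\frac ND.
\]
Since $|L|<2N/D$, the sequence $p_c(n):=g(nc)-g(nD)$, $n\in L$, fails to be $\delta/2$-equidistributed. To see this, test with $F(x)=e(x)$, which has Lipschitz norm $\ll 1$; this only affects constants. We reindex $L=[a,b]$ as $n=a+m$ with $1\le m\le M:=|L|$. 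The shifted polynomial $m\mapsto p_c(a+m)$ still has degree at most $d$.

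By Theorem~\ref{prop:gt-weyl}, for each $c\in\mathcal{C}$ there is a nonzero $k_c$ with $|k_c|\ll_d\delta^{-O_d(1)}$ and $\|k_c\,p_c(a+\cdot)\|_{C^\infty[M]}\ll_d\delta^{-O_d(1)}$. We then undo the shift. The coefficient of $m^j$ in $p_c(a+m)$ is $\sum_{i\ge j}\binom{i}{j}a^{i-j}\beta_i(c^i-D^i)$. Since $a<\delta^{-C_d-3}$, a downward induction on $j$ (starting from the top coefficient, as in \cite{GTmob}) should give
\[
\bigl\|k'_c\,\beta_j(c^j-D^j)\bigr\|_{\mathbb{R}/\mathbb{Z}}\ll_d\delta^{-O_d(1)}(N/D)^{-j}\qquad(1\le j\le d).
\]
Here $k'_c$ is a further bounded multiple of $k_c$ absorbing binomial denominators, and we use $M\asymp N/D$. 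This step is messy but routine; the content is that $a$ is polynomially small in $\delta^{-1}$, so the losses are of the form $\delta^{-O_d(1)}$.

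Next we pigeonhole. There are only $\delta^{-O_d(1)}$ choices of $k'_c$, so one value $k$ is taken on a subset $\mathcal{C}'$ of size $\gg\delta^{O_d(1)}D$. For each $j$, consider $\alpha_j:=k\beta_j$. We need to pass from information about $\alpha_j(c^j-D^j)$ with $c\in\mathcal{C}'$ to information about $\alpha_j$ itself.

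Differences remove the $D^j$ term. Lemma~\ref{lem:gt-waring} with $K=D$, $S=\mathcal{C}'$, $\eta\gg\delta^{O(1)}$ and $t=2^j+1$ shows that $\gg\delta^{O(1)}D^j$ integers $r\le tD^j$ are of the form $c_1^j+\dots+c_t^j$ with $c_i\in\mathcal{C}'$. For each such $r$, $\alpha_j(r-tD^j)$ lies within $O(\delta^{-O(1)}(D/N)^j)$ of $0$ mod~$1$. Thus the orbit $\alpha_j n'$ lands in a short interval for a $\delta^{O(1)}$-proportion of $n'\le tD^j$. Lemma~\ref{lem:bt-recurrence} then yields $0<|k_j|\ll\delta^{-O(1)}$ with
\[
\|k_jk\beta_j\|_{\mathbb{R}/\mathbb{Z}}\ll\delta^{-O(1)}(D/N)^jD^{-j}=\delta^{-O(1)}N^{-j}.
\]
The hypothesis $N/D>\delta^{-C_d}$ is what makes the interval length $\varepsilon\ll\delta^{-O(1)}(D/N)^j$ at most $\sigma/2$, as the lemma requires. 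Setting $K=k\prod_jk_j$ gives $\|Kg\|_{C^\infty[N]}\ll_d\delta^{-O_d(1)}$ with $|K|\ll_d\delta^{-O_d(1)}$. Finally, we choose $C_d$ larger than all the exponents that arise.

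The main obstacle is the un-shifting step together with tracking exponents consistently. The interplay between the bound $a<\delta^{-C_d-3}$ and the eventual choice of $C_d$ must not be circular. I would first fix every exponent coming from Theorem~\ref{prop:gt-weyl} and Lemmas~\ref{lem:bt-recurrence} and \ref{lem:gt-waring} independently of $C_d$. The shift then costs a factor $a^{d}\le\delta^{-d(C_d+3)}$, but this multiplies only the lower-order contributions, which are damped by extra powers of $M^{-1}\le(4\delta^{C_d})$. These extra powers should cancel that loss.
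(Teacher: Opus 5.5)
The gap is in the small-$D$ regime. The paper splits into $D\le\delta^{-C}$ and $D>\delta^{-C}$, and notes that this split is the only essential departure from Green--Tao. You run one uniform argument for all $D$, and your recurrence step fails when $D$ is small. If alternative (1) fails with, say, $D\le\delta^{-1}$, you only know $\mathcal C\neq\emptyset$. Your lower bound $|\mathcal C'|\gg\delta^{O_d(1)}D$ may be below $1$, so $\mathcal C'=\{c\}$ is possible. Then the only Waring-representable integer is $\ell=tc^j$, and you have a single return of $\alpha_j\ell$ to the short arc. Lemma~\ref{lem:bt-recurrence} cannot be used on one return. With $N'=tD^j$ bounded and $\sigma\asymp 1/N'$, its conclusion $\|k\alpha\|_{\mathbb{R}/\mathbb{Z}}\ll\varepsilon\sigma^{-O(1)}/N'$ is false in general as $\varepsilon\to0$ (take $\alpha$ badly approximable). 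Its proof needs at least two returns at bounded distance, i.e.\ $\sigma N'\gg1$. For $D>\delta^{-C}$ with $C$ large, the count $\gg\delta^{O_d(1)}D^j$ is large and your argument goes through as written.

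For $D\le\delta^{-C}$ you need the paper's direct device instead of recurrence. Take one $c\in\mathcal C$ with its multiplier $k_c$ and set $K=k_c\prod_{i=1}^d(c^i-D^i)$. This is a nonzero integer because $c<D$, and $|K|\ll\delta^{-O_d(1)}$. Then use $\|K\beta_j\|_{\mathbb{R}/\mathbb{Z}}\le\prod_{i\neq j}|c^i-D^i|\cdot\|k_c\beta_j(c^j-D^j)\|_{\mathbb{R}/\mathbb{Z}}$. Passing from scale $N/D$ to $N$ costs only $D^{O_d(1)}=\delta^{-O_d(1)}$. (Equivalently, a single return already gives $\|\alpha_j(tD^j-\ell)\|_{\mathbb{R}/\mathbb{Z}}\le t\varepsilon_0$ with $0<tD^j-\ell\le tD^j$. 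But you must say this explicitly rather than route it through Lemma~\ref{lem:bt-recurrence}.)

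Apart from this, your outline is the paper's large-$D$ argument: Weyl dichotomy for each $c$, pigeonhole on the multiplier, Waring, recurrence, and $K=k\prod_j k_j$. Your handling of the shift is more careful than the paper's. Each factor $a^{i-j}$ comes with $M^{-(i-j)}$, and $M>\tfrac14\delta^{-C_d}$, $a<\delta^{-C_d-3}$ give $a/M<4\delta^{-3}$. So the losses are $\delta^{-O(d)}$ independent of $C_d$, which is what makes the final choice of $C_d$ non-circular. A naive $a^{O(d)}$ loss would not give this.
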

This lemma, for fixed $D$, bounds the number of integers $c$ with $1\le c<D$ for which the exponential sum over $L$ exceeds $\delta N/D$. This is used to bound the number of bad base points noted in the proof strategy.
The condition on $L$ is taken for various adaptations.
The condition on $D$ is taken to ensure the argument
of \cite[\S3]{GTmob} can be applied.
\begin{proof}
The only essential difference between this proof and the arguments in \cite[\S3]{GTmob} is the case when $D$ is small.
For the reader's convenience, we record the whole proof.

Apply Theorem~\ref{prop:quantitative-weyl-circle} to the polynomials
\(\xi_c(n)=g(nc)-g(nD)=\sum_{j=1}^d\beta_j(c^j-D^j)n^j\) on~$L$, at
parameter \(\delta/(4\pi)\) to absorb \(\|e\|_{\mathrm{Lip}}\).
Write \(\mathcal R\) for the exceptional set in
alternative~(1).  Since \cite{BT} is stated for initial intervals starting at~$1$, translate
\(L=[a,b]\) to \([1,|L|]\) by \(n\mapsto n+a-1\).
The shifted polynomial \(\xi_c(n+a-1)\) has coefficients differing from
those of \(\xi_c\) by factors \(a^{O(d)}\); the bound \(a<\delta^{-C_d-3}\)
keeps the \(C^\infty[N_D]\) size \(\delta^{-O_d(1)}\).

Let $C>0$ be a parameter depending only on $d$.

\smallskip\noindent\textit{Case \(D\le\delta^{-C}\).}
When $D$ is small, we show that either each exponential sum is
at most \(\delta N/D\), or alternative~(2) of the lemma holds.
In other words, the exceptional set in alternative~(1) of the lemma
can be taken empty.

Applying Theorem~\ref{prop:quantitative-weyl-circle} to each \(\xi_c\),
either the exponential sum is \(\le\delta N/D\), or the second
alternative of that theorem produces \(q_c\) with
\(1\le|q_c|\ll_d\delta^{-O_d(1)}\) and
\[
\|q_c\xi_c\|_{C^\infty[N_D]}\ll_d\delta^{-O_d(1)}.
\]
Recall the definition of \(\xi_c\), whose terms contain \(c\) and \(D\).
The second alternative of Theorem~\ref{prop:quantitative-weyl-circle}
therefore reads
\[
N_D^j\bigl\|q_c\beta_j(c^j-D^j)\bigr\|_{\mathbb R/\mathbb Z}
\ll_d\delta^{-O_d(1)}
\qquad(1\le j\le d).
\]
To pass this to alternative~(2) of the lemma, we need the property of $g$.
So we put
\[
K=q_c\prod_{i=1}^d(c^i-D^i),
\qquad
K\beta_j
=\bigl(q_c\beta_j(c^j-D^j)\bigr)
\cdot\prod_{i\neq j}(c^i-D^i).
\]
The second factor is an integer, so \(\|K\beta_j\|_{\mathbb R/\mathbb Z}\) is controlled by
\(\|q_c\beta_j(c^j-D^j)\|_{\mathbb R/\mathbb Z}\).  Changing scale from \(N_D\) to \(N=N_DD\)
costs only powers of \(D\), and since \(D\le\delta^{-C}\) we have
\(|c^j-D^j|\le D^j\le\delta^{-Cj}\).  Thus
\[
0<|K|\ll_d\delta^{-O_d(1)},
\qquad
N^j\|K\beta_j\|_{\mathbb R/\mathbb Z}\ll_d\delta^{-O_d(1)},
\]
which is alternative~(2) of Lemma~\ref{lem:diophantine-nil}.
Since for each $c$, either the exponential sum is at most $\delta N/D$, or alternative~(2) of the lemma holds, we are done.

\smallskip\noindent\textit{Case \(D>\delta^{-C}\).}
If \(|\mathcal R|\le\delta D\) we are done.
Otherwise each \(c\in\mathcal R\) has
\(\bigl|\sum_{n\in L}e(\xi_c(n))\bigr|>\delta N/D\), so
Theorem~\ref{prop:quantitative-weyl-circle} supplies a nonzero integer \(q_c\) with
\(|q_c|\ll_d\delta^{-O_d(1)}\) and \(\|q_c\xi_c\|_{C^\infty[N_D]}\ll_d\delta^{-O_d(1)}\).
The number of possible values of \(q_c\) is \(\ll_d\delta^{-O_d(1)}\).
Since \(|\mathcal R|>\delta D>\delta^{-C+1}\), pigeonholing yields a single nonzero \(q\)
and a set \(E\subseteq\mathcal R\) with
\[
|E|\gg_d\delta^{O_d(1)}\,D
\]
such that \(\|q\xi_c\|_{C^\infty[N_D]}\ll_d\delta^{-O_d(1)}\) for every \(c\in E\).
We choose $C>0$ large enough depending only on $d$ to make sure $E$ is non-empty.
In particular \(E\) has density \(\gg_d\delta^{O_d(1)}\) in \(\{1,\ldots,D-1\}\), and
\[
\bigl\|q\beta_j(c^j-D^j)\bigr\|_{\mathbb R/\mathbb Z}
\ll_d\delta^{-O_d(1)}\,(D/N)^j
\qquad(1\leq j\leq d),\qquad c\in E.
\]

Fix \(j\in\{1,\ldots,d\}\) and set \(\alpha=q\beta_j\).  For every \(c\in E\) we have
\[
\bigl\|\alpha(c^j-D^j)\bigr\|_{\mathbb R/\mathbb Z}
\ll_d\delta^{-O_d(1)}\Bigl(\frac DN\Bigr)^j,
\]
so \(\alpha c^j\) lies in an arc of length
\(\varepsilon_0\ll_d\delta^{-O_d(1)}(D/N)^j\) about \(\alpha D^j\).

To pass from the \(j\)th powers \(c^j\) to a denser set of integers, let
\(t=2^j+1\) and apply Lemma~\ref{lem:gt-waring} to \(E\), with
density \(\eta\gg_d\delta^{O_d(1)}\).  This supplies
\(\gg_d\delta^{O_d(1)}D^j\) integers \(\ell\in[tD^j]\) of the form
\(c_1^j+\cdots+c_t^j\) with each \(c_i\in E\).

For each such \(\ell\), the value \(\alpha\ell\) lies in an arc \(I\) about
\(t\alpha D^j\) of length \(\varepsilon\le t\varepsilon_0\ll_d\delta^{-O_d(1)}(D/N)^j\).
The number of these \(\ell\) is \(\gg_d\delta^{O_d(1)}D^j\), a density
\(\sigma\gg_d\delta^{O_d(1)}\) in \(\{1,\ldots,tD^j\}\).

We apply Lemma~\ref{lem:bt-recurrence}: \(\alpha\ell\in I\) for a density
\(\sigma\) of the integers \(\ell\le tD^j\).  From the bounds on \(\varepsilon\) and \(\sigma\), one has \(\varepsilon/\sigma\ll_d\delta^{-O_d(1)}(D/N)^j\). Thus \(\varepsilon\le\sigma/2\) once \((D/N)^j\ll_d\delta^{O_d(1)}\), which holds for \(N/D>\delta^{-C_d}\) with \(C_d\) large in terms of \(d\).
The lemma produces an integer \(r\ll_d\delta^{-O_d(1)}\) with
\[
D^j\|r\alpha\|_{\mathbb R/\mathbb Z}
\ll_d\delta^{-O_d(1)}\Bigl(\frac DN\Bigr)^j,
\]
hence \(N^j\|rq\beta_j\|_{\mathbb R/\mathbb Z}\ll_d\delta^{-O_d(1)}\).  Doing the same for
each \(j\) produces an integer \(r_j\). Setting \(K=q\prod_{j=1}^d r_j\) gives
\(0<|K|\ll_d\delta^{-O_d(1)}\) and
\(\|Kg\|_{C^\infty[N]}\ll_d\delta^{-O_d(1)}\), which is
alternative~(2) of Lemma~\ref{lem:diophantine-nil}.
\end{proof}

\section{\texorpdfstring{Estimates on $\mathscr{F}_1$}{Estimates on F_1}}
\label{sec:F1}

As in the inclusion--exclusion decomposition, we can decompose the sum
over the union of $\mathscr{F}_i$ into the sum over each of the $\mathscr{F}_i$ and the intersections.
In this section, we deal with the summation over each $\mathscr{F}_i$.
Without loss of generality, we can assume that $i=1$, otherwise we  can apply a permutation 
to change the coordinates.
The contribution of \(\mathscr{F}_1\) is given by the following proposition.
\begin{proposition}\label{prop:F1}
Let $s\geq 2$, and assume that $\mathcal{H}(s-1)$ holds.
For every integer $d\geq 1$, there exist constants $C_d>5$ and $c_0=c_0(d,s)>0$, with $C_d$ depending only on~$d$, such that the following holds.
For every polynomial $g$ of degree~$d$, all
$N$ large enough depending on $d$ and~$s$, all $\delta$ with $N^{-c_0}<\delta<1/8$,
and all integers $A$ in the window
\[
\delta^{-C_d}<A<\delta^{-(C_d+3)},
\]
at least one of the following holds:
\begin{enumerate}
    \item $\big|\Sg{\mathscr{F}_1}-(s-1)!\,N^s\big|\ll_{d,s} \delta^{1/2} N^s (\log N)^{O(s^2)}$.
    \item There exists $q\in\mathbb{Z}\setminus\{0\}$ with $|q|\ll_d\delta^{-O_d(1)}$ such that $\|qg\|_{C^\infty[N]}\ll_d\delta^{-O_d(1)}$.
\end{enumerate}
\end{proposition}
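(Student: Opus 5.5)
We split \(\mathscr{F}_1\) into its diagonal part \(\{n_1=m_1\}\) and the off-diagonal part \(\mathscr{F}_1^\circ\). On the diagonal, \(n_1=m_1\ge A\) is free (the condition \(\gcd(n_1,m_1)\ge A\) forces \(n_1\ge A\)), the phases \(e(g(n_1))\overline{e(g(m_1))}\) cancel, and the remaining variables satisfy \(n_2\cdots n_s=m_2\cdots m_s\). The diagonal contribution is therefore \((N-A+1)\,U_{s-1}(N)\). By \(\mathcal{H}(s-1)\), either alternative~(2) holds, or \(U_{s-1}(N)=(s-1)!N^{s-1}+O(\delta N^{s-1}(\log N)^{O(s^2)})\). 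In the latter case the diagonal equals \((s-1)!N^s\) up to an error \(O(\delta N^s(\log N)^{O(s^2)})+O(AN^{s-1}(\log N)^{O(s^2)})\). The second error is negligible because \(A<\delta^{-(C_d+3)}<N^{c_0(C_d+3)}\), and we choose \(c_0\) small.

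For \(\mathscr{F}_1^\circ\) we parameterise. Write \(D=\gcd(n_1,m_1)\ge A\), \(n_1=Dc\), \(m_1=Dc'\) with \(\gcd(c,c')=1\) and \(c\neq c'\). The equation becomes \(c\,n_2\cdots n_s=c'\,m_2\cdots m_s\), so \(c'\mid n_2\cdots n_s\) and \(c\mid m_2\cdots m_s\). The key point is that, once \((c,c',n_2,\dots,m_s)\) is fixed, \(D\) ranges freely over an interval \(A\le D\le N/\max(c,c')\) (after absorbing the divisibility \(D\mid\cdot\)). The phase is \(e(g(Dc)-g(Dc'))\) times factors independent of \(D\).

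We then split by the size of \(X=\max(c,c')\).

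\emph{Case \(X\le\delta^{-C'}\) small.} The \(D\)-sum has length \(\asymp N/X\). We apply Lemma~\ref{lem:diophantine-nil} with the roles rearranged: the summation variable is \(D\) in an interval \(L\), and the "\(c,D\)" of the lemma are \(c,c'\). The interval condition \(a<\delta^{-C_d-3}\) is exactly why \(A\) lies in the stated window. Alternatively we apply Theorem~\ref{prop:quantitative-weyl-circle} directly to \(D\mapsto g(Dc)-g(Dc')\), whose coefficients are \(\beta_j(c^j-c'^j)\) with \(c^j-c'^j\neq0\) of size \(\delta^{-O(1)}\). Either the \(D\)-sum is \(\le\delta N/X\), or the small-\(D\) argument of Lemma~\ref{lem:diophantine-nil} gives alternative~(2). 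Summing over the remaining variables with divisor bounds (Lemma~\ref{lem:divisor}) costs \(N^{s-1}(\log N)^{O(s^2)}\) per unit of \(1/X\) weight, giving \(O(\delta N^s(\log N)^{O(s^2)})\).

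\emph{Case \(X\) large.} We dyadically decompose in \(X\) and instead sum over \(c\) (the smaller of \(c,c'\)) with the roles of the lemma reversed. Fixing the larger coordinate and \(D\)-ranges, the count of base points where the fibre sum exceeds \(\delta\) times its length is \(\ll\delta\) times the total by alternative~(1) of Lemma~\ref{lem:diophantine-nil}. Bad base points are bounded trivially by the fibre length times divisor-weighted counts, and Cauchy--Schwarz with the \(\tau^2\) bound of Lemma~\ref{lem:divisor} costs a square root. This is why the error is \(\delta^{1/2}\).

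The main obstacle is this last step. The base points carry multiplicities (the numbers of factorisations of \(n_2\cdots n_s\) and \(m_2\cdots m_s\) divisible by \(c'\) and \(c\)), and these are \(\tau\)-weights. A \(\delta\)-proportion of bad base points therefore does not directly give a \(\delta\)-proportion of mass. We would handle this by Cauchy--Schwarz: the mass of the bad set is at most \((\#\text{bad})^{1/2}\bigl(\sum\tau^2\bigr)^{1/2}\), which yields \(\delta^{1/2}N^s(\log N)^{O(s^2)}\). We also need the hypothesis \(N/D>\delta^{-C_d}\) of Lemma~\ref{lem:diophantine-nil}. Ranges where the fibre is too short (\(N/X<\delta^{-C_d}\), that is, \(D\) essentially bounded) are treated trivially; since then \(D\) takes few values, their total count is \(O(\delta N^s(\log N)^{O(s^2)})\) by the third bound in Lemma~\ref{lem:divisor}, applied with the lower cutoff \(A\).
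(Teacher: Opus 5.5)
\textbf{The gap: short fibres.} Your diagonal argument is the paper's. Your off-diagonal parameterisation is a valid, lighter substitute for the matrix parameterisation of Lemma~\ref{lem:param-general}: $D=\gcd(n_1,m_1)$ runs freely over $[A,N/X]$, and the remaining variables are tied only by $c\,n_2\cdots n_s=c'\,m_2\cdots m_s$. The gap is in applying Lemma~\ref{lem:diophantine-nil} to the fibre $L=[A,N/X]$. That lemma requires $\frac14\frac NX<|L|<2\frac NX$, which fails whenever $A\le N/X\le\frac43A$. There the fibre can be arbitrarily short compared with $N/X$.

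The hypothesis you single out, $N/X>\delta^{-C_d}$, is automatic: a nonempty fibre forces $N/X\ge A>\delta^{-C_d}$. So the range you propose to discard is empty, and the real obstruction is never addressed.

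Nor can these short fibres be discarded trivially. Already the points $n_1=ga$, $m_1=gb$, $n_2=hb$, $m_2=ha$, $n_i=m_i$ $(i\ge3)$, with $\gcd(a,b)=1$, $a<b$, $\frac{3N}{4A}<b\le\frac Ng$, $h\le\frac Nb$, lie on such fibres (they force $g\in[A,\frac43A)$). A direct count gives $\gg N^s$ of them. So the claimed bound $O(\delta N^s(\log N)^{O(s^2)})$ via the third estimate of Lemma~\ref{lem:divisor} is false there. That estimate yields a $1/A$ saving only when the large gcd divides the common product twice, as in Lemma~\ref{lem:I2}. A single large $\gcd(n_1,m_1)$ gives no such saving.

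\textbf{The fix.} The paper handles this by writing, for $A\le N/X\le 2A$,
\[
\sum_{A\le D\le N/X}=\sum_{1\le D\le N/X}-\sum_{1\le D\le A-1}.
\]
These are two initial segments of length $\asymp N/X$, depending only on $X$. The lemma applies to each, and the union of the two exceptional sets is still $\ll_d\delta X$ and depends only on $X$. The cost is a factor $2$ in the good bound. With this change your argument goes through.

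You should also make the decoupling in your Cauchy--Schwarz step explicit. The paper gets it for free: after fixing rows $2,\dots,s$, the weight $\frac{N}{B_1}\prod_{i\ge2}\frac{N}{A_i}$ does not depend on the first row. In your setup, the analogue is the pointwise bound
\[
W(c,c')\le\tau_{s-1}(c)\tau_{s-1}(c')\frac{N^{s-1}}{X}(2s\log N)^{O(s^2)},
\]
obtained by writing $n_2\cdots n_s=c'P$ and $m_2\cdots m_s=cP$ with $P\le N^{s-1}/X$. Cauchy--Schwarz over $c\in\mathcal{R}_{c'}$ then gives $\sum_{c\in\mathcal{R}_{c'}}\tau_{s-1}(c)\ll\delta^{1/2}c'(\log N)^{O(s^2)}$. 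The sum over $c'$ closes with $\sum_{c'\le N}\tau_{s-1}(c')/c'\le(2\log N)^{s-1}$.
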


To prove the proposition, split
\[
\Sg{\mathscr{F}_1}
=\Sg{\mathscr{F}_1\setminus\{n_1=m_1\}}
+\Sg{\mathscr{F}_1\cap\{n_1=m_1\}}
=:J_1+J_2.
\]
In the above, $J_1$ is the off-diagonal part, and $J_2$ is the diagonal part.
The off-diagonal part \(J_1\) has no main term.
The diagonal part \(J_2\) contains main term from
\(\mathcal{H}(s-1)\).
In Section \ref{sec:off-diagonal}, we deal with the off-diagonal part $J_1$, and in Section \ref{sec:diagonal}, we deal with the diagonal part $J_2$.

Recall that the parameter $A$ appears in the definition of $\mathscr{F}_1$, and is restricted to $\delta^{-C_d}<A<\delta^{-(C_d+3)}$. The lower bound matches the hypothesis $N/D>\delta^{-C_d}$ of Lemma~\ref{lem:diophantine-nil}. It is checked below, so the lemma can be applied.

\subsection{\texorpdfstring{Off-diagonal part $J_1$}{Off-diagonal part J_1}}
\label{sec:off-diagonal}

The main result of this subsection is Lemma~\ref{lem:J1}.
Write $\mathscr{F}_1^\circ:=\mathscr{F}_1\setminus\{n_1=m_1\}$ for the off-diagonal part of~$\mathscr{F}_1$, so that $J_1=\Sg{\mathscr{F}_1^\circ}$.

To prove Lemma~\ref{lem:J1}, we first need a parameterisation lemma for the off-diagonal part, which is slightly
different from \cite[Lemma~3.4]{KX} (the case $s=2$) and the factorisations in
\cite[Lemma~2.3]{BNR} and \cite[\S8]{VW}. 
\begin{lemma}[Matrix parameterisation]\label{lem:param-general}
Let $H\geq 1$ and, with \(V\) as above, set
\[
V_H^\circ=\bigl\{(\vec{n}_s,\vec{m}_s)\in V:\ \gcd(n_1,m_1)\geq H,\ n_1\neq m_1\bigr\}.
\]
There is an injection $\phi\colon V_H^\circ\to M_{s\times s}(\mathbb{Z}_{>0})$ sending $(\vec{n}_s,\vec{m}_s)$ to a matrix
\[
\begin{pmatrix}
a_{11} & a_{12} & \cdots & a_{1s} \\
a_{21} & a_{22} & \cdots & a_{2s} \\
\vdots & \vdots & \ddots & \vdots \\
a_{s1} & a_{s2} & \cdots & a_{ss}
\end{pmatrix}
\]
with $n_i=\prod_j a_{ij}$ the product of row $i$ and $m_i=\prod_k a_{ki}$ the product of column $i$.
Write $\pi_1$ for the projection onto the off-diagonal entries,
\[
\pi_1\bigl((a_{ij})_{1\leq i,j\leq s}\bigr)
=(a_{ij})_{i\neq j},
\]
forgetting the diagonal.
For each $y=(a_{ij})_{i\neq j}\in\pi_1\bigl(\phi(V_H^\circ)\bigr)$, writing
\[
A_i=\prod_{j\neq i}a_{ij},\qquad B_i=\prod_{k\neq i}a_{ki},\qquad M_i=\max\{A_i,B_i\},
\]
one has $A_1\neq B_1$, and
\begin{equation}\label{eq:fibre-general}
\pi_1^{-1}(y)\cap\phi(V_H^\circ)
=\left\{
(a_{ij}):
(a_{ij})_{i\neq j}=y,\quad
a_{11}\in[H,N/M_1],\quad
a_{ii}\in[1,N/M_i]\ (2\leq i\leq s)
\right\}.
\end{equation}
\end{lemma}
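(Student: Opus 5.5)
The plan is to build \(\phi\) in two stages: first strip off the ``diagonal'' common factors \(\gcd(n_i,m_i)\), then factor what remains by a canonical greedy procedure that only sees the reduced numbers. The point of this ordering is that the constraints defining the image of \(\phi\) should then involve only the off-diagonal entries. That is exactly what the box description \eqref{eq:fibre-general} of the fibres requires.

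\emph{Step 1 (diagonal entries).} Given \((\vec{n}_s,\vec{m}_s)\in V_H^\circ\), put \(a_{ii}=\gcd(n_i,m_i)\), \(n_i'=n_i/a_{ii}\) and \(m_i'=m_i/a_{ii}\). Then \(\gcd(n_i',m_i')=1\) for each \(i\). Moreover \(\prod_i n_i'=\prod_i m_i'\), since dividing both sides of \(n_1\cdots n_s=m_1\cdots m_s\) by \(\prod_i a_{ii}\) preserves equality.

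\emph{Step 2 (off-diagonal entries).} Run a greedy gcd algorithm on the reduced numbers. Process the pairs \((i,j)\) with \(i\neq j\) in a fixed lexicographic order. At each step set \(a_{ij}=\gcd(r_i,c_j)\), where \(r_i\) and \(c_j\) are what currently remains of \(n_i'\) and \(m_j'\), and then divide \(r_i\) and \(c_j\) by \(a_{ij}\).

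We must check that all remainders end at \(1\). Any prime \(p\) dividing \(n_i'\) does not divide \(m_i'\), since the two are coprime. Since \(\prod_i n_i'=\prod_i m_i'\), the \(p\)-adic mass of the rows equals that of the columns, and the greedy procedure matches it completely using only off-diagonal pairs. This gives \(n_i=a_{ii}A_i\) and \(m_i=a_{ii}B_i\), with \(A_i=n_i'\), \(B_i=m_i'\), and \(\gcd(A_i,B_i)=1\). Injectivity is immediate, because the row products of \(\phi\) recover \((\vec{n}_s,\vec{m}_s)\).

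\emph{Step 3 (fibres).} Call \(y=(a_{ij})_{i\neq j}\) \emph{admissible} if two things hold: \(\gcd(A_i,B_i)=1\) for all \(i\), and the greedy procedure applied to \((A_i)\) and \((B_i)\) returns \(y\) itself. Both conditions depend on \(y\) alone. Every \(y\in\pi_1(\phi(V_H^\circ))\) is admissible by construction, and \(A_1\neq B_1\) holds because \(n_1\neq m_1\) and the two differ by the common factor \(a_{11}\).

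Conversely, fix an admissible \(y\) and diagonal entries with \(a_{11}\in[H,N/M_1]\) and \(a_{ii}\in[1,N/M_i]\) for \(i\ge 2\). Define \(n_i=a_{ii}A_i\) and \(m_i=a_{ii}B_i\). These lie in \([1,N]\). Both \(\prod_i n_i\) and \(\prod_i m_i\) equal the product of all matrix entries, so the tuple lies in \(V\). Coprimality of \(A_i,B_i\) gives \(\gcd(n_i,m_i)=a_{ii}\). Hence \(\gcd(n_1,m_1)\ge H\), and \(n_1\neq m_1\) because \(A_1\neq B_1\). Thus the tuple lies in \(V_H^\circ\). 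Rerunning Steps 1 and 2 recovers the same diagonal, then \(n_i'=A_i\) and \(m_i'=B_i\), and by admissibility the same \(y\). This proves the equality of sets \eqref{eq:fibre-general}.

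The main obstacle is Step 2 and its interaction with Step 3. One must make sure the canonical off-diagonal factorisation both terminates with all remainders equal to \(1\) and depends only on the reduced numbers and not on the diagonal. This independence is what makes the fibre a full box. If the greedy order gave trouble, I would replace it by an explicit prime-by-prime canonical matching, for example a northwest-corner rule applied to \(p\)-adic valuations. That rule makes the independence from the diagonal evident.
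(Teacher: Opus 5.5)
Your proposal is correct and follows essentially the same route as the paper. Both arguments strip the diagonal gcds $a_{ii}=\gcd(n_i,m_i)$, fill the off-diagonal entries by a canonical gcd extraction that sees only the coprime reduced numbers, and prove the fibre identity by reconstructing $n_i=a_{ii}A_i$, $m_i=a_{ii}B_i$ and rerunning the construction.

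The differences are cosmetic. You extract row by row in lexicographic order, whereas the paper extracts row~1 and column~1 and then recurses on the $(s-1)\times(s-1)$ block. Prime by prime your order is a northwest-corner rule, and it exhausts each row in its own pass: if row $i$ kept some $p$-mass, every column $j\neq i$ would already be empty, and column $i$ carries no $p$-mass by coprimality, which contradicts the balance $\sum_i v_p(n_i')=\sum_j v_p(m_j')$. You assert this termination rather than spelling it out, but the argument is the one just given. Your ``admissibility'' condition simply makes explicit the paper's appeal to uniqueness of the parameterisation.
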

We take a moment to explain the intuition behind the parameterisation lemma.
At \(s=2\), \cite[Lemma~3.4]{KX} parameterises the solutions of \(m_1m_2=n_1n_2\) by
\[
m_1=ga,\qquad m_2=hb,\qquad n_1=gb,\qquad n_2=ha,
\qquad \gcd(a,b)=1,
\]
where \(g=\gcd(m_1,n_1)\) and \(h=\gcd(m_2,n_2)\).

To estimate the exponential sum over \(\mathscr{F}_1^\circ\), it is important to find arithmetic progressions in the parameterisation. At \(s=2\), fix the coprime pair \((a,b)\). Both \(g\) and \(h\) stay free. Then we may first sum over \(g\) and \(h\). As \(g\) runs, \(n_1=gb\) and \(m_1=ga\) are arithmetic progressions. As \(h\) runs, \(n_2=ha\) and \(m_2=hb\) are arithmetic progressions. This is the structure used in \cite[Lemma~3.4]{KX} for the computation of the 4th moment. The lemma above keeps it and extends the parameterisation to general \(s\).
For general $s$, the lemma describes the fibres of $\pi_1$ on the image of $\phi$.  In particular, as $a_{11}$ varies,
$n_1=a_{11}A_1$ and
$m_1=a_{11}B_1$ are arithmetic progressions.
This is the most important property used in what follows, and the main addition
compared with \cite[Lemma~2.3]{BNR} and \cite[\S8]{VW}.
\begin{proof}
We first construct $\phi$ and check injectivity, then verify the fibre description.

\smallskip\noindent\textbf{Construction.}
Set $a_{ii}:=\gcd(n_i,m_i)$ for every~$i$. Write $u_i:=n_i/a_{ii}$ and
$v_i:=m_i/a_{ii}$, so that $\gcd(u_i,v_i)=1$ for each~$i$ and
$\prod_i u_i=\prod_i v_i$.
The off-diagonal entries $a_{ij}$ with $i\neq j$ remain to be chosen so that
\[
u_i=\prod_{j\neq i}a_{ij},\qquad
v_i=\prod_{k\neq i}a_{ki}.
\]
For the off-diagonal entries, the way we choose them does not matter so much as long as the procedure makes the parameterisation unique.
One may instead fill them by \cite[Lemma~2.3]{BNR} or by the construction of \cite[\S8]{VW}.
For the reader's convenience we record the following construction.
We proceed by induction on~$s$.
Extract the off-diagonal entries of the first row by successive gcds
\[
a_{1j}=\gcd\Bigl(u_1\Big/\prod_{2\leq t<j}a_{1t},\,v_j\Bigr)
\qquad(j\geq 2).
\]
We claim that $u_1=\prod_{j=2}^s a_{1j}$.
In fact, from $\prod_i u_i=\prod_i v_i$ we have $u_1\mid\prod_i v_i$.
Since also $\gcd(u_1,v_1)=1$, we get $u_1\mid\prod_{j=2}^s v_j$.
Write $L$ for the leftover $u_1\big/\prod_{j=2}^s a_{1j}$.
Then $L$ divides $\prod_{j=2}^s v_j$, but is coprime to each $v_j$ with $j\geq 2$, so $L=1$.
Similarly, extract the off-diagonal entries of the first column by successive gcds
\[
a_{i1}=\gcd\Bigl(u_i,\,v_1\Big/\prod_{2\leq t<i}a_{t1}\Bigr)
\qquad(i\geq 2).
\]
The same argument gives $v_1=\prod_{i=2}^s a_{i1}$.

When $s=2$, the off-diagonal consists exactly of $a_{12}$ and $a_{21}$, so $u_1=a_{12}$ and $v_1=a_{21}$. The identity $u_1 u_2=v_1 v_2$ together with $\gcd(u_2,v_2)=1$ then gives $u_2=a_{21}$ and $v_2=a_{12}$.

Now suppose $s\geq 3$.
Write $u_i':=u_i/a_{i1}$ and $v_i':=v_i/a_{1i}$ for $i=2,\ldots,s$.
These are positive integers with $\gcd(u_i',v_i')=1$. Cancelling
$u_1=\prod_{j\geq 2}a_{1j}$ and $v_1=\prod_{i\geq 2}a_{i1}$ from the
product equation gives
\[
u_2'\cdots u_s'=v_2'\cdots v_s'.
\]
Thus the same construction applies to $u_i'$, $v_i'$,
and uniquely determines the remaining $(s-1)\times(s-1)$ block. That is,
\[
u_i'=\prod_{\substack{j=2\\ j\neq i}}^s a_{ij},\qquad
v_i'=\prod_{\substack{k=2\\ k\neq i}}^s a_{ki}
\qquad(i=2,\ldots,s).
\]
and therefore
\[
u_i=a_{i1}u_i'=\prod_{j\neq i}a_{ij},\qquad
v_i=a_{1i}v_i'=\prod_{k\neq i}a_{ki}.
\]

Together with $a_{ii}=\gcd(n_i,m_i)$, this gives $n_i=\prod_j a_{ij}$ and $m_i=\prod_k a_{ki}$.
Note that in this construction, $\gcd(A_i,B_i)=1$ for $i=1,\ldots,s$.
Also note that
this construction procedure makes the parameterisation unique.

\smallskip\noindent\textbf{Injectivity.}
The map is injective because the pair is recovered as the row and column products $n_i=\prod_j a_{ij}$ and $m_i=\prod_k a_{ki}$.

\smallskip\noindent\textbf{Fibre.}
Fix $y=(a_{ij})_{i\neq j}\in\pi_1\bigl(\phi(V_H^\circ)\bigr)$, and write $A_i$, $B_i$ and $M_i$ as in the statement.
Since $y$ arises from a matrix in $\phi(V_H^\circ)$, one has  $A_1\neq B_1$ and $\gcd(A_1,B_1)=1$ by the construction.

We check the two inclusions in~\eqref{eq:fibre-general}.
We first show that $\pi_1^{-1}(y)\cap\phi(V_H^\circ)$ is contained in the right-hand side.
If $(a_{ij})\in\pi_1^{-1}(y)\cap\phi(V_H^\circ)$, then $(a_{ij})=\phi(\vec{n}_s,\vec{m}_s)$ for some $(\vec{n}_s,\vec{m}_s)\in V_H^\circ$, and
\[
n_i=a_{ii}A_i,\qquad m_i=a_{ii}B_i
\qquad(1\leq i\leq s).
\]
Membership in $V_H^\circ$ and $\gcd(A_1,B_1)=1$ give $a_{11}=\gcd(n_1,m_1)\geq H$.
The bounds $n_i,m_i\leq N$ give $a_{ii}\leq N/M_i$ for each $i$.
Thus $(a_{ij})$ lies in the right-hand side of~\eqref{eq:fibre-general}.

We next show that the right-hand side of~\eqref{eq:fibre-general} is contained in $\pi_1^{-1}(y)\cap\phi(V_H^\circ)$.
Let $a_{11}\in[H,N/M_1]$ and $a_{ii}\in[1,N/M_i]$ for $2\leq i\leq s$, and let $(a_{ij})$ be the matrix with this diagonal and with off-diagonal entries given by $y$.
Reconstruct
\[
n_i=a_{ii}A_i,\qquad m_i=a_{ii}B_i
\qquad(1\leq i\leq s).
\]
We claim that $(\vec n_s,\vec m_s)$ lies in $V_H^\circ$ and that $\phi(\vec n_s,\vec m_s)=(a_{ij})$.

We first check membership in $V$.
The bounds $a_{ii}\leq N/M_i$ and $A_i,B_i\leq M_i$ give $n_i,m_i\leq N$.
The coordinates are positive integers.
The product identity $n_1\cdots n_s=m_1\cdots m_s$ holds because $\prod_i A_i$ and $\prod_i B_i$ are both the product of the off-diagonal entries of $y$.

We next check the gcd condition.
Since $\gcd(A_1,B_1)=1$, we have $\gcd(n_1,m_1)=a_{11}\geq H$.
Finally, $A_1\neq B_1$ and $a_{11}\geq 1$ give $n_1=a_{11}A_1\neq a_{11}B_1=m_1$.
Thus the reconstructed tuple lies in $V_H^\circ$.

It remains to check that $\phi(\vec n_s,\vec m_s)=(a_{ij})$.
Since $\gcd(A_i,B_i)=1$ for each $i$, the diagonal of $\phi(\vec n_s,\vec m_s)$ is recovered as $a_{ii}=\gcd(n_i,m_i)$.
Uniqueness of the parameterisation then forces the off-diagonal to be exactly $y$.
\end{proof}

\begin{remark}\label{rem:param-remark}
The same parameterisation $\phi$ applies to any finite set of positive solutions of
$n_1\cdots n_s=m_1\cdots m_s$
(in particular to the scaled images in Piece~III of Lemma~\ref{lem:I1}).
\end{remark}

We now deal with $J_1$. The following lemma is the goal of this section.

\begin{lemma}\label{lem:J1}
Let the parameters be as in Proposition~\ref{prop:F1}.
At least one of the following holds:
\begin{enumerate}
    \item $|J_1|\ll_{d,s} \delta^{1/2} N^s(\log N)^{O(s^2)}$.
    \item There exists $q\in\mathbb{Z}\setminus\{0\}$ with $|q|\ll_d\delta^{-O_d(1)}$ such that $\|qg\|_{C^\infty[N]}\ll_d\delta^{-O_d(1)}$.
\end{enumerate}
\end{lemma}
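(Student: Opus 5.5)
The plan is to parameterise $\mathscr{F}_1^\circ$ with Lemma~\ref{lem:param-general}, sum first over the free diagonal entry $a_{11}$, and control that inner sum with Lemma~\ref{lem:diophantine-nil}. Take $H=A$ in Lemma~\ref{lem:param-general}, so $\mathscr{F}_1^\circ=V_A^\circ$ and $\phi$ is a bijection onto its image. Fibring over $y=\pi_1(\phi(\cdot))$ and the remaining diagonal entries $a_{22},\dots,a_{ss}$ gives
\[
J_1=\sum_{y}\ \sum_{\substack{a_{ii}\le N/M_i\\ 2\le i\le s}}\Bigl(\prod_{i\ge2}e\bigl(g(a_{ii}A_i)-g(a_{ii}B_i)\bigr)\Bigr)\sum_{A\le a\le N/M_1}e\bigl(g(aA_1)-g(aB_1)\bigr).
\]
The outer phase has modulus one and does not depend on $a=a_{11}$. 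Hence
\[
|J_1|\le\sum_y\prod_{i\ge2}\frac{N}{M_i}\,\Bigl|\sum_{a\in L_y}e\bigl(g(ac)-g(aD)\bigr)\Bigr|,
\]
where $L_y=[A,N/M_1]$, $D=M_1=\max(A_1,B_1)$ and $c=\min(A_1,B_1)$. If $A_1>B_1$ we first take complex conjugates, so the inner sum has exactly the shape in Lemma~\ref{lem:diophantine-nil}. Since $A_1\neq B_1$, we have $1\le c<D$. Nonempty fibres force $N/D\ge A>\delta^{-C_d}$, and the left endpoint satisfies $A<\delta^{-C_d-3}$. These are precisely the hypotheses of Lemma~\ref{lem:diophantine-nil}, and this is why $A$ is restricted to that window.

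Next I group the $y$ by the value of $D$ and apply Lemma~\ref{lem:diophantine-nil} for each $D$. If alternative~(2) occurs for some $D$, we are in case~(2) of Lemma~\ref{lem:J1}; note that $K$ and its bounds do not depend on $D$. Otherwise, for every $D$ the set $\mathcal{B}_D$ of bad $c$ has size $\ll_d\delta D$.

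For good $c$ the inner sum is at most $\delta N/D$. Summing the resulting bound over all $y$ is $\delta$ times the total count of $V$. By Lemma~\ref{lem:divisor}, that count is $\ll N^s(2s\log N)^{O(s^2)}$.

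For bad $c$ I bound the inner sum trivially by $N/D$. The quantity to control is then
\[
\sum_D\sum_{c\in\mathcal{B}_D}\frac ND\,W(c,D),
\]
where $W(c,D)$ is the number of completions $(y,a_{22},\dots,a_{ss})$ with $\{A_1,B_1\}=\{c,D\}$. Fixing $(A_1,B_1)$ determines $n_1=aA_1$ and $m_1=aB_1$ up to $a$. The rest of the data is a solution of $n_2\cdots n_s\,A_1=m_2\cdots m_s\,B_1$, so $W(c,D)$ is bounded by a divisor-type sum of size roughly $N^{s-1}\tau_{O(s)}(cD)$-weighted. I apply Cauchy--Schwarz over $c<D$, using $|\mathcal{B}_D|\ll\delta D$ together with the second-moment bound $\sum\tau_l^2\le X(2\log X)^{l^2-1}$ and the weighted bound $\sum\tau_l(n)^2/n^2\ll(\log N)^{O(1)}/A$ from Lemma~\ref{lem:divisor}. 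This gives
\[
\sum_D\frac ND\,|\mathcal{B}_D|^{1/2}\Bigl(\sum_{c<D}W(c,D)^2\Bigr)^{1/2}\ll\delta^{1/2}N^s(\log N)^{O(s^2)}.
\]
The loss of a square root here is the source of the exponent $\delta^{1/2}$ in the statement.

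The main obstacle is the length condition $\tfrac14 N/D<|L|<2N/D$ in Lemma~\ref{lem:diophantine-nil}. It fails when $N/M_1$ is comparable to $A$, i.e.\ when $a_{11}$ runs over a range of size $\ll A$. My first attempt is to replace $L_y$ by the dyadic-type interval $[A,N/D]$ only when $N/D\ge 2A$; then $|L_y|\ge N/(2D)$ and the lemma applies. For the remaining $y$ (those with $N/(2A)<D\le N/A$), I would apply Lemma~\ref{lem:diophantine-nil} with the full $a$-range padded or truncated to an admissible $L$, at the cost of an error of $O(A)$ terms per fibre. I would then try to show that these configurations, where $\gcd(n_1,m_1)\asymp A$ and $\max(n_1,m_1)\asymp N$, are absorbed by the $\delta$-savings. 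If that fails, the fallback is to rescale the parameter in Lemma~\ref{lem:diophantine-nil}, replacing $D$ by $N/|L_y|$, and recheck that alternative~(2) still yields $\|Kg\|_{C^\infty[N]}\ll_d\delta^{-O_d(1)}$. Because $D$ only changes by a bounded factor, the change-of-scale step in that lemma's proof costs only $\delta^{-O_d(1)}$.
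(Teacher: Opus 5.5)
Your overall structure matches the paper's proof. You parameterise with $H=A$, sum the free entry $a_{11}$ first, and group base points by $D=M_1$. You then apply Lemma~\ref{lem:diophantine-nil} once per $D$, bound good fibres by $\delta N/D$ times a divisor count, and handle bad fibres by Cauchy--Schwarz against $|\mathcal B_D|\ll_d\delta D$, which is where $\delta^{1/2}$ comes from.

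The genuine gap is the short-fibre range $A\le N/D\le 2A$. You identify it correctly but do not resolve it, and neither of your two suggestions works.

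\emph{Padding.} If you pad $L_y$ to an admissible interval and bound the padding trivially, you pay $O(A)\asymp N/D$ per fibre, which is no saving at all. These fibres are not negligible. Already for $s=2$, the points of $\mathscr{F}_1^\circ$ with $N/(2A)<D\le N/A$ number $\asymp N^2$, roughly $\sum_{N/(2A)<M\le N/A}\varphi(M)(N/M-A)N/M$. So their trivial contribution cannot be absorbed into $\delta^{1/2}N^2(\log N)^{O(1)}$ once $\delta$ is below a suitable negative power of $\log N$. The lemma must allow this, since $\delta$ ranges down to $N^{-c_0}$.

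\emph{Rescaling.} The fallback is not sound either. The length $|L_y|=\lfloor N/D\rfloor-A+1$ can be as small as $1$. So $D'=N/|L_y|$ can exceed $D$ by a factor $\asymp A$, not a bounded factor, and the hypothesis $N/D'>\delta^{-C_d}$ fails. Moreover, $D$ also appears inside the phase $g(nD)$, so changing the parameter of Lemma~\ref{lem:diophantine-nil} changes the sum being estimated.

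The missing idea is simply not to bound the padding trivially. In the short range write
\[
S=\sum_{1\le a\le N/D}e\bigl(g(ac)-g(aD)\bigr)-\sum_{1\le a\le A-1}e\bigl(g(ac)-g(aD)\bigr).
\]
Both are initial segments. Because $A\le N/D\le 2A$, both lengths lie in $(\tfrac14 N/D,\,2N/D)$, so Lemma~\ref{lem:diophantine-nil} applies to each with the same $D$. Let $\mathcal B_D$ be the union of the two exceptional sets; it is still $\ll_d\delta D$ and still depends only on $D$. Then $|S|\le 2\delta N/D$ for $c\notin\mathcal B_D$, and your good/bad summation goes through unchanged.

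One smaller point: in the bad-fibre count you must make the factor $1/D$ explicit. Since $\gcd(c,D)=1$, the relation $c\,n_2\cdots n_s=D\,m_2\cdots m_s$ forces $D\mid n_2\cdots n_s$. This gives $W(c,D)\le\tau_{s-1}(c)\tau_{s-1}(D)\sum_{k\le N^{s-1}/D}\tau_{s-1}(k)^2$. Without the $1/D$, the final sum over $D$ loses a factor of $N$. The tail bound for $\sum\tau_l(n)^2/n^2$ is not needed here.
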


We now explain the proof strategy for Lemma~\ref{lem:J1}.

Apply Lemma~\ref{lem:param-general} with $H=A$ to
$\mathscr{F}_1^\circ=V_A^\circ$, and set
\[
\mathscr{F}_1'
:=\pi_1\bigl(\phi(\mathscr{F}_1^\circ)\bigr)
\]
for the resulting set of off-diagonal base points.
In Figure~\ref{fig:roadmap}, \(\phi\) is the parameterisation and \(\pi_1\) is the projection; \(\pi_1\) forgets the diagonal.
Each point of \(\mathscr{F}_1'\) is a base point.  The fibre over it is given by~\eqref{eq:fibre-general}.  Figure~\ref{fig:pi1} draws each fibre as a vertical interval.

\begin{figure}[ht]
\centering
\begin{tikzpicture}[
  font=\small,
  >=Stealth,
  arr/.style={->, thick},
]
  \node[draw=figink, rounded corners=2pt, fill=figplane,
        minimum width=1.8cm, minimum height=1.8cm]
    (F1) at (0,2.55) {};
  \fill[plum!34] ($(F1.center)+(0,0.16)$) ellipse (0.34 and 0.46);
  \draw[figink!75, line width=0.55pt] ($(F1.center)+(0,0.16)$) ellipse (0.34 and 0.46);
  \node[font=\scriptsize, anchor=north, inner sep=0pt]
    at ($(F1.center)+(0,-0.46)$) {$\mathscr{F}_1$};

  \node[inner sep=1pt, font=\scriptsize] (Phi) at (5.1,2.55) {%
    $\begin{pmatrix}
    a_{11} & a_{12} & \cdots & a_{1s} \\
    a_{21} & a_{22} & \cdots & a_{2s} \\
    \vdots & \vdots & \ddots & \vdots \\
    a_{s1} & a_{s2} & \cdots & a_{ss}
    \end{pmatrix}$};
  \node[above=2pt] at (Phi.north) {$\phi(\mathscr{F}_1)$};

  \draw[arr] (F1.east) -- node[above]{$\phi$} (Phi.west);

  \coordinate (O) at (4.95,-1.35);
  \coordinate (U) at (1.35,0.38);
  \coordinate (V) at (-1.05,0.52);
  \def\ah{0.42}
  \fill[figplane] (O) -- ++(U) -- ++(V) -- ++($-1*(U)$) -- cycle;
  \draw[figink] (O) -- ++(U) -- ++(V) -- ++($-1*(U)$) -- cycle;
  \node[below=1pt] at ($(O)+0.55*(U)$) {$\mathscr{F}_1'$};

  \draw[arr] (Phi.south) -- node[right]{$\pi_1$} ++(0,-0.75);

  % back to front
  \foreach \s/\t/\h in {
    0.20/0.80/0.58,
    0.70/0.74/0.92,
    0.42/0.50/1.28,
    0.82/0.36/0.68,
    0.26/0.20/1.02,
    0.60/0.16/1.18%
  } {
    \coordinate (P) at ($(O)+\s*(U)+\t*(V)$);
    \coordinate (Bot) at ($(P)+(0,\ah)$);
    \coordinate (Top) at ($(Bot)+(0,\h)$);
    \draw[figink!40, dotted] (P) -- (Bot);
    \draw[figblue, line width=1.35pt] (Bot) -- (Top);
    \fill[figblue] (Bot) circle (1.15pt);
    \fill[figblue] (Top) circle (0.7pt);
  }
\end{tikzpicture}
\caption{Each point of \(\mathscr{F}_1'\) is a base point.  The vertical
intervals are the fibres of~\eqref{eq:fibre-general}.
}
\label{fig:pi1}
\end{figure}

A first attempt uses only Lemma~\ref{lem:param-general} and $|e|=1$.
Each fibre has length at most \(\prod_{i=1}^s N/M_i\), and summing over the
base points in \(\mathscr{F}_1'\) gives
\[
|J_1|\ll N^s(\log N)^{s(s-1)}\ll_s N^s(\log N)^{O(s^2)}.
\]
This is even larger than the size of the main term \(s!\,N^s\), so this is not enough.

The proof uses Lemma~\ref{lem:diophantine-nil} to get a $\delta$ saving.
We express \(J_1\) by summing first over the fibres with fixed base point,
then summing over all base points. The base points are then split into good and bad according to the size of the fibre sum.
For the bad base points, we show that they take up only a small proportion by using Lemma~\ref{lem:diophantine-nil}.

\begin{proof}
Assume alternative~(2) of the present lemma fails.
We start from $J_1=\Sg{\mathscr{F}_1^\circ}$.
Lemma~\ref{lem:param-general} with $H=A$ sends each $(\vec{n}_s,\vec{m}_s)\in\mathscr{F}_1^\circ$ to a matrix $(a_{ij})$ with $n_i=\prod_j a_{ij}$ and $m_i=\prod_k a_{ki}$.
Recall that
\[
\mathscr{F}_1'
=\pi_1\bigl(\phi(\mathscr{F}_1^\circ)\bigr).
\]
For each $(a_{ij})_{i\neq j}\in\mathscr{F}_1'$, Lemma~\ref{lem:param-general} gives the fibre~\eqref{eq:fibre-general} with $H=A$, and $n_i=a_{ii}A_i$, $m_i=a_{ii}B_i$ for $1\leq i\leq s$.
Substituting into $J_1$ gives
\[
J_1=\sum_{\mathscr{F}_1'}\sum_{\substack{a_{11}\in[A,N/M_1]\\ a_{ii}\in[1,N/M_i]\ (i\geq 2)}}\prod_{i=1}^s e\bigl(g(a_{ii}A_i)-g(a_{ii}B_i)\bigr).
\]
The outer sum runs over the base points, and the inner sum is the exponential sum over one fibre, as in Figure~\ref{fig:pi1}.
Since $|e|=1$, the sums over $a_{ii}$ for $i\geq 2$ are at most $N/M_i$, and we obtain
\begin{equation}\label{eq:J1-bound}
|J_1|\leq \sum_{\mathscr{F}_1'}\Big|\sum_{A\leq a_{11}\leq N/M_1} e\big(g(a_{11}A_1)-g(a_{11}B_1)\big)\Big|
\prod_{i=2}^s\frac{N}{M_i}.
\end{equation}

By symmetry between $A_1$ and $B_1$, sum separately on $\{A_1<B_1\}$ and $\{A_1>B_1\}$; we treat $\{A_1<B_1\}$ (so $M_1=B_1$) and the other side is identical.

For a base point $(a_{ij})_{i\neq j}\in\mathscr{F}_1'$, write
\[
S=\sum_{A\leq a_{11}\leq N/B_1} e\big(g(a_{11}A_1)-g(a_{11}B_1)\big).
\]
We call the right endpoint $N/B_1$ the height of the fibre. The lower bound $A>\delta^{-C_d}$ is imposed so that this height exceeds $\delta^{-C_d}$, as required by Lemma~\ref{lem:diophantine-nil}. This is checked in the proof of the following claim.
The strategy is to apply Lemma~\ref{lem:diophantine-nil} to $S$ and split the fibres of Figure~\ref{fig:pi1} into good and bad.
On a good fibre, $|S|\ll\delta N/B_1$.
The bad fibres are those for which the exponential sum is large.
This is recorded as the following claim.

\begin{clm}\label{clm:J1-fibre}
Let \((a_{ij})_{i\neq j}\in\mathscr{F}_1'\) be a base point with \(A_1<B_1\). For \(S\) as above, there exists a set \(\mathcal{R}_{B_1}\subset\{1,\ldots,B_1-1\}\), depending only on \(B_1\), with \(|\mathcal{R}_{B_1}|\ll_d\delta B_1\) such that
\[
|S|\ \leq\ 2\delta\frac{N}{B_1}+\mathbf{1}_{A_1\in\mathcal{R}_{B_1}}\cdot\frac{N}{B_1}.
\]
\end{clm}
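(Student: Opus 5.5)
Fix the base point with $A_1<B_1$ and put $D:=B_1$ and $c:=A_1$, so $1\le c<D$ and
\[
S=\sum_{A\le n\le N/D} e\bigl(g(nc)-g(nD)\bigr).
\]
The plan is to apply Lemma~\ref{lem:diophantine-nil} with this $D$, and to take $\mathcal R_{B_1}$ to be the exceptional set of its alternative~(1):
\[
\mathcal R_{B_1}=\Bigl\{1\le c<D: \Bigl|\sum_{n\in L} e\bigl(g(nc)-g(nD)\bigr)\Bigr|>\delta\tfrac ND\Bigr\}.
\]
This set depends only on $D=B_1$, on $L$ and on $g$. Since alternative~(2) of Lemma~\ref{lem:J1} is assumed to fail, the same conclusion (with $K$ in place of $q$) fails in Lemma~\ref{lem:diophantine-nil}. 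Hence alternative~(1) there holds, giving $|\mathcal R_{B_1}|\ll_d\delta B_1$.

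\textbf{Checking the hypotheses.} We need $N/D>\delta^{-C_d}$. The fibre is nonempty, because the base point lies in $\mathscr F_1'$, so $A\le N/B_1$. Therefore $N/D\ge A>\delta^{-C_d}$, and in particular $D<N$. The natural interval is $[A,\lfloor N/D\rfloor]$, with left endpoint $a=A<\delta^{-C_d-3}$, which matches the window for $A$. However, its length may fail the requirement $\frac14 N/D<|L|<2N/D$, for instance when $A$ is comparable to $N/D$.

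\textbf{Main obstacle: the length condition.} I handle it by splitting into two cases.
\begin{itemize}
\item If $A\le \frac12 N/D$, take $L=[A,\lfloor N/D\rfloor]$. Its length lies between roughly $\frac12 N/D-1$ and $N/D$, which is inside $(\frac14 N/D,\,2N/D)$ once $N/D>\delta^{-C_d}$ is large. Also $L\subseteq[1,2N_D]$.
\item If $A>\frac12 N/D$, then $N/D<2A<2\delta^{-C_d-3}$. In this case the trivial bound on $S$ is at most $N/D$, which does not give $2\delta N/D$. So instead, in this case, apply Lemma~\ref{lem:diophantine-nil} to $L'=[1,\lfloor N/D\rfloor]$, whose length is admissible. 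Then write $S$ as the difference of the sums over $L'$ and over $[1,A-1]$. This handles the good $c$ only if $A-1\le\delta N/D$, which may fail.
\end{itemize}
A cleaner route in the second case is to note that the fibre starting point $A$ already exceeds half the height. I would then redefine $\mathcal R_{B_1}$ as the union of the exceptional sets for the two admissible intervals $[1,\lfloor N/D\rfloor]$ and $[1,A-1]$, the latter when its length is admissible, and use
\[
S=\sum_{n\in[1,\lfloor N/D\rfloor]}-\sum_{n\in[1,A-1]}
\]
for $c$ outside the union. Each piece is then at most $\delta N/D$, which explains the factor $2\delta$ in the claim. Both exceptional sets have size $\ll_d\delta D$, so the union does too.

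For $c=A_1\notin\mathcal R_{B_1}$ this gives $|S|\le 2\delta N/B_1$. For $c\in\mathcal R_{B_1}$ I use the trivial bound $|S|\le N/B_1$. Together these give
\[
|S|\le 2\delta\frac N{B_1}+\mathbf 1_{A_1\in\mathcal R_{B_1}}\frac N{B_1}.
\]
The delicate bookkeeping is exactly this length/endpoint check. Everything else is a direct invocation of Lemma~\ref{lem:diophantine-nil}.
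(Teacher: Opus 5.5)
Your argument is correct and is essentially the paper's proof: the same long/short fibre split (your threshold $A\le\tfrac12 N/B_1$ is the paper's $N/B_1>2A$), a direct application of Lemma~\ref{lem:diophantine-nil} on $[A,N/B_1]$ for long fibres, and for short fibres writing $S$ as the difference of the initial segments $[1,N/B_1]$ and $[1,A-1]$, with $\mathcal R_{B_1}$ the union of the two exceptional sets. In your second case the interval $[1,A-1]$ is always admissible, since $A>\tfrac12 N/B_1$ and $N/B_1\ge A>\delta^{-C_d}$ give $\tfrac14 N/B_1<A-1<N/B_1$. So you can drop the hedge ``when its length is admissible'' and delete the abandoned first attempt in that bullet.
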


\noindent\textit{Proof of the claim.}
We plan to apply Lemma~\ref{lem:diophantine-nil}.
But the input of this lemma requires the length of the interval to be comparable to the height.
Since there are long fibres and short fibres as shown in Figure~\ref{fig:J1-longshort},
one may worry that if the fibre is too short, then we are not able to apply Lemma~\ref{lem:diophantine-nil}.

But the observation is that we can still apply Lemma~\ref{lem:diophantine-nil}, because the height of the fibre is large enough.
We divide into two cases.

\begin{figure}[ht]
\centering
\begin{tikzpicture}[
  font=\small,
  scale=0.78,
  every node/.append style={transform shape},
]
  \coordinate (O) at (0,0);
  \coordinate (U) at (5.6,0.48);
  \coordinate (V) at (-1.65,1.05);
  \def\ah{1.28}

  \fill[figplane] (O) -- ++(U) -- ++(V) -- ++($-1*(U)$) -- cycle;
  \draw[figink] (O) -- ++(U) -- ++(V) -- ++($-1*(U)$) -- cycle;
  \node[below=2pt] at ($(O)+0.50*(U)$) {$\mathscr{F}_1'$};

  \foreach \s/\t/\len in {
    0.08/0.22/0.52,
    0.18/0.70/0.44,
    0.28/0.42/0.56,
    0.12/0.88/0.46,
    0.34/0.12/0.50%
  } {
    \coordinate (P) at ($(O)+\s*(U)+\t*(V)$);
    \coordinate (Bot) at ($(P)+(0,\ah)$);
    \coordinate (Top) at ($(Bot)+(0,\len)$);
    \draw[figink!40, dotted] (P) -- (Bot);
    \draw[goldpale, line width=2pt] (Bot) -- (Top);
    \draw[gold, line width=1.15pt] (Bot) -- (Top);
    \fill[gold] (Bot) circle (1.4pt);
    \fill[gold] (Top) circle (0.9pt);
  }

  \foreach \s/\t/\len in {
    0.50/0.78/1.85,
    0.60/0.28/1.62,
    0.72/0.58/1.95,
    0.84/0.16/1.70,
    0.66/0.90/1.55%
  } {
    \coordinate (P) at ($(O)+\s*(U)+\t*(V)$);
    \coordinate (Bot) at ($(P)+(0,\ah)$);
    \coordinate (Top) at ($(Bot)+(0,\len)$);
    \draw[figink!40, dotted] (P) -- (Bot);
    \draw[figblue, line width=1.35pt] (Bot) -- (Top);
    \fill[figblue] (Bot) circle (1.3pt);
    \fill[figblue] (Top) circle (0.7pt);
  }

  \node[font=\scriptsize, align=center, text=goldink, anchor=south]
    at ($(O)+0.20*(U)+0.48*(V)+(0,\ah)+(0,0.74)$)
    {short fibre\\[-1pt]$N/B_1\le 2A$};
  \node[font=\scriptsize, align=center, text=figblue, anchor=south]
    at ($(O)+0.70*(U)+0.50*(V)+(0,\ah)+(0,2.12)$)
    {long fibre\\[-1pt]$N/B_1>2A$};
  \node[font=\scriptsize, left=8pt] at ($(O)+0.50*(V)$) {base};
\end{tikzpicture}
\caption{The fibre interval \(L=[A,N/B_1]\) is drawn floating above \(\mathscr{F}_1'\).
Gold: short fibres \(A\le N/B_1\le 2A\).
Purple: long fibres \(N/B_1>2A\).}
\label{fig:J1-longshort}
\end{figure}

\smallskip\noindent\emph{Long fibres} ($N/B_1>2A$).
In this case the range \(L:=[A,N/B_1]\) is an interval of length \(\asymp N/B_1\) with left endpoint \(A<\delta^{-(C_d+3)}\), so alternative~(1) of Lemma~\ref{lem:diophantine-nil} applies directly. There is \(\mathcal{R}_{B_1}\subset\{1,\ldots,B_1-1\}\) with \(|\mathcal{R}_{B_1}|\ll_d\delta B_1\) such that \(|S|\leq\delta N/B_1\) for all \(A_1\notin\mathcal{R}_{B_1}\).
Without the length comparison, \(L\) would be an illegal input for that lemma.

\smallskip\noindent\emph{Short fibres.}
Here $A\le N/B_1\le 2A$, so $N/B_1\asymp A$.
We write $S$ as a difference of two initial segments,
\[
S=\sum_{1\leq a_{11}\leq N/B_1} e\big(g(a_{11}A_1)-g(a_{11}B_1)\big)
-\sum_{1\leq a_{11}\leq A-1} e\big(g(a_{11}A_1)-g(a_{11}B_1)\big).
\]
Both \([1,N/B_1]\) and \([1,A-1]\) then have length comparable to \(N/B_1\), and both start at \(1\), so Lemma~\ref{lem:diophantine-nil} applies to each.
Let \(\mathcal{R}_{B_1}\) be the union of the two exceptional sets. One still has \(|\mathcal{R}_{B_1}|\ll_d\delta B_1\).
For \(A_1\notin\mathcal{R}_{B_1}\), each of the two sums is at most \(\delta N/B_1\), and therefore \(|S|\le 2\delta N/B_1\).
The factor $2$ is the only extra cost of writing the short fibre as a difference of two longer sums.

In either case one has $|S|\le 2\delta N/B_1$ off a set $\mathcal{R}_{B_1}$ of size $\ll_d\delta B_1$.  On that set the trivial bound $|S|\le N/B_1$ gives the claimed inequality.

This proves the claim.

\smallskip
Recall that $S$ is obtained by fixing the base point $(a_{ij})_{i\neq j}$ and summing over $a_{11}$.
The claim therefore groups the base points, or equivalently the fibres, into good and bad: the good fibres contribute the first term, with a $2\delta$ saving, while the bad fibres---those $(a_{ij})_{i\neq j}$ with $A_1\in\mathcal{R}_{B_1}$---contribute the second term.
By the claim one has $|\mathcal{R}_{B_1}|\ll_d\delta B_1$, so the number of bad fibres is small, which will be helpful when summing over the base points.

Next we sum over the base points, that is, $\mathscr{F}_1'$.
Since $M_1=B_1$ in this branch, $\dfrac{N}{B_1}\prod_{i=2}^s\dfrac{N}{M_i}=\prod_{i=1}^s\dfrac{N}{M_i}$.
Write
\[
\Sigma_{\mathrm I}:=\sum_{\mathscr{F}_1'\cap\{A_1<B_1\}}\ \prod_{i=1}^s\frac{N}{M_i},\qquad
\Sigma_{\mathrm{II}}:=\sum_{\mathscr{F}_1'\cap\{A_1<B_1,\,A_1\in\mathcal{R}_{B_1}\}}\ \prod_{i=1}^s\frac{N}{M_i}.
\]
Inserting the claim, the contribution of $\{A_1<B_1\}$ to the right-hand side of~\eqref{eq:J1-bound} satisfies
\[
\sum_{\mathscr{F}_1'\cap\{A_1<B_1\}}\ |S|\prod_{i=2}^s\frac{N}{M_i}
\leq 2\delta\Sigma_{\mathrm I}+\Sigma_{\mathrm{II}}.
\]

\smallskip\noindent\emph{The term $\Sigma_{\mathrm I}$.}
Since $M_i\geq A_i:=\prod_{j\neq i}a_{ij}$ for every $i=1,\ldots,s$, we may replace each $M_i$ by $A_i$ and drop the restriction $A_1<B_1$.
Summing row by row,
\[
\Sigma_{\mathrm I}\ \leq\ \sum_{\mathscr{F}_1'}\prod_{i=1}^s\frac{N}{A_i}
\leq\prod_{i=1}^s\Big(N\sum_{A_i\leq N}\frac{\tau_{s-1}(A_i)}{A_i}\Big).
\]
By Lemma~\ref{lem:divisor}, $\sum_{A_i\leq N}\tau_{s-1}(A_i)/A_i\leq(2\log N)^{s-1}$ for each of the $s$ rows, so
\[
\Sigma_{\mathrm I}\ll N^s(\log N)^{s(s-1)}\ll_s N^s(\log N)^{O(s^2)}.
\]

\smallskip\noindent\emph{The term $\Sigma_{\mathrm{II}}$.}
We are still in the branch $A_1<B_1$, so $M_1=B_1$.
For $i\ge 2$, we have $M_i\ge A_i$, and we use this trivial bound  $\prod_{i=2}^sN/M_i\le \prod_{i=2}^s N/A_i$.
So $\Sigma_{\mathrm{II}}$ is less than
\begin{equation}\label{eq:Sigma-II}
\sum_{\mathscr{F}_1'\cap\{A_1<B_1,\,A_1\in\mathcal{R}_{B_1}\}}\ \frac{N}{B_1}\prod_{i=2}^s\frac{N}{A_i}.
\end{equation}
The following picture, at $s=5$, records which entries of the matrix in Lemma~\ref{lem:param-general} appear in this denominator; the $A_i$ and $B_i$ are the off-diagonal row and column products.
\begin{center}
\begin{tikzpicture}[
  font=\tiny,
  cell/.style={draw, minimum size=4.1mm, inner sep=0pt},
]
  % white: absent; mid: once (rows 2..s); dark: twice (B_1 and A_i)
  \foreach \i/\ii in {1/1, 2/2, 3/3, 4/4, 5/s} {
    \foreach \j/\jj in {1/1, 2/2, 3/3, 4/4, 5/s} {
      \ifnum\i=\j
        \def\lab{\cdot}
        \def\cellfill{white}
      \else
        \def\lab{a_{\ii\jj}}
        \ifnum\i=1
          \def\cellfill{white}
        \else
          \ifnum\j=1
            \def\cellfill{figcell}
          \else
            \def\cellfill{figcellpale}
          \fi
        \fi
      \fi
      \node[cell, draw=figink!45, fill=\cellfill] (C\i\j) at ({(\j-1)*0.46},{(1-\i)*0.46}) {$\lab$};
    }
  }
  \foreach \i/\lab in {1/{A_1}, 2/{A_2}, 3/{A_3}, 4/{A_4}, 5/{A_s}} {
    \node[left=1.5pt] at (C\i1.west) {\(\lab\)};
  }
  \foreach \j/\lab in {1/{B_1}, 2/{B_2}, 3/{B_3}, 4/{B_4}, 5/{B_s}} {
    \node[below=1.5pt] at (C5\j.south) {\(\lab\)};
  }
\end{tikzpicture}
\end{center}
The first row is white: those entries do not appear in the denominator.
The darker cells appear twice in the denominator, and the paler cells appear once.
Since the first row does not appear in the denominator, we may first fix rows $2,\ldots,s$ and sum over the first row, without worrying about the weight changing in \eqref{eq:Sigma-II}.

Fix rows $2,\ldots,s$. Then $B_1$ and $A_2,\ldots,A_s$ are constant, so the weight in~\eqref{eq:Sigma-II} stays constant, and we sum first over the first row.
For each fixed $A_1$, the number of tuples $(a_{12},\ldots,a_{1s})$ with $A_1=\prod_{j=2}^s a_{1j}$ is at most $\tau_{s-1}(A_1)$.
Thus the inner sum over the first row contributes a factor at most
\[
\sum_{\substack{A_1<B_1\\ A_1\in\mathcal{R}_{B_1}}}\tau_{s-1}(A_1)
\leq\sum_{A_1\in\mathcal{R}_{B_1}}\tau_{s-1}(A_1).
\]

By Cauchy--Schwarz and the second bound of Lemma~\ref{lem:divisor}, we have
\begin{align*}
\sum_{A_1\in\mathcal{R}_{B_1}}\tau_{s-1}(A_1)
&\leq |\mathcal{R}_{B_1}|^{1/2}\Bigl(\sum_{n\leq B_1}\tau_{s-1}(n)^2\Bigr)^{1/2}\\
&\ll_{d,s}\delta^{1/2}B_1(\log N)^{O(s^2)}.
\end{align*}

We next sum over rows $2,\dots, s$.
Multiplying the above count by $\dfrac{N}{B_1}\prod_{i=2}^s\dfrac{N}{A_i}$ as in~\eqref{eq:Sigma-II} and summing over rows $2,\ldots,s$,
the factors of $B_1$ cancel and we obtain
\[
\Sigma_{\mathrm{II}}\ \ll_{d,s}\ \delta^{1/2}N(\log N)^{O(s^2)}\sum_{\text{rows }2,\ldots,s}\prod_{i=2}^s\frac{N}{A_i}.
\]
The remaining sum is a product of row-sums exactly as for $\Sigma_{\mathrm I}$, so
\[
\Sigma_{\mathrm{II}}\ll_d \delta^{1/2} N^s(\log N)^{O(s^2)}.
\]
Combining, we have $2\delta\Sigma_{\mathrm I}+\Sigma_{\mathrm{II}}\ll_{d,s}\delta^{1/2} N^s(\log N)^{O(s^2)}$. The same bound holds on $\{A_1>B_1\}$ by symmetry.
Thus, unless the Diophantine alternative of Lemma~\ref{lem:diophantine-nil} occurs, $|J_1|\ll_{d,s} \delta^{1/2} N^s(\log N)^{O(s^2)}$.
If it does occur, the integer $K$ produced there is the $q$ of alternative~(2).
\end{proof}

\subsection{\texorpdfstring{Diagonal part $J_2$}{Diagonal part J_2}}
\label{sec:diagonal}

We now deal with $J_2$. The following lemma is the goal of this section. The argument uses the inductive hypothesis $\mathcal{H}(s-1)$.
\begin{lemma}\label{lem:J2}
Assume that $\mathcal{H}(s-1)$ holds.
There exists $c_0=c_0(d,s)>0$ such that, for the remaining parameters as in Proposition~\ref{prop:F1}, with this choice of $c_0$, at least one of the following alternatives occurs:
\begin{enumerate}
    \item $\big|J_2-(s-1)!\,N^s\big|\ll_{d,s}\delta^{1/2} N^s(\log N)^{O(s^2)}$.
    \item There exists $q\in\mathbb{Z}\setminus\{0\}$ with $|q|\ll_d\delta^{-O_d(1)}$ such that $\|qg\|_{C^\infty[N]}\ll_d\delta^{-O_d(1)}$.
\end{enumerate}
\end{lemma}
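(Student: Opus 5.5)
On the diagonal $n_1=m_1$ the phase factors $e(g(n_1))\overline{e(g(m_1))}$ cancel, so the plan is to reduce $J_2$ to the $(2s-2)$-th moment and invoke $\mathcal{H}(s-1)$.

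On $\{n_1=m_1\}$ the condition $\gcd(n_1,m_1)\ge A$ becomes $n_1\ge A$. Cancelling $n_1$ from the product equation leaves $n_2\cdots n_s=m_2\cdots m_s$, so the remaining coordinates range over the set $V^{(s-1)}$ for moment order $s-1$. Hence
\[
J_2=\sum_{A\le n_1\le N}\ \Sg{V^{(s-1)}}=(N-A+1)\,U_{s-1}(N),
\]
where $\Sg{V^{(s-1)}}$ denotes the exponential sum over $V^{(s-1)}$ with the same phase as before but only $s-1$ pairs, so that Lemma~\ref{lem:moment-formula} gives $\Sg{V^{(s-1)}}=U_{s-1}(N)$. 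Since $U_{s-1}(N)=N^{s-1}\mathcal{M}_{s-1}(N)$, we get
\[
J_2-(s-1)!\,N^s=(N-A+1)N^{s-1}\bigl(\mathcal{M}_{s-1}(N)-(s-1)!\bigr)-(A-1)(s-1)!\,N^{s-1}.
\]

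Next apply $\mathcal{H}(s-1)$ with the same $d$, $g$, $N$ and $\delta$. For $s-1=1$, $\mathcal{H}(1)$ gives $\mathcal{M}_1(N)=1$ exactly, so nothing is needed. For $s-1\ge2$, take $c_0(d,s)\le c_0(d,s-1)$ so that $\delta>N^{-c_0(d,s-1)}$ and $N$ is large enough for the hypothesis to apply. Either alternative~(2) of $\mathcal{H}(s-1)$ holds, which is verbatim alternative~(2) of the lemma, or
\[
|\mathcal{M}_{s-1}(N)-(s-1)!|\ll_{d,s}\delta(\log N)^{O(s^2)}.
\]
In the latter case the first term above is $\ll\delta N^s(\log N)^{O(s^2)}\le\delta^{1/2}N^s(\log N)^{O(s^2)}$, since $\delta<1$.

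It remains to control the boundary term $(A-1)(s-1)!\,N^{s-1}$. Because $A<\delta^{-(C_d+3)}$, this term is $\ll_s\delta^{-(C_d+3)}N^{s-1}$. We want it to be at most $\delta^{1/2}N^s$, which holds provided $\delta^{-(C_d+7/2)}\le N$. This follows from $\delta>N^{-c_0}$ once we also impose $c_0\le 1/(C_d+4)$. So we set $c_0(d,s)=\min\{c_0(d,s-1),\,1/(C_d+4)\}$.

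The only delicate point is this compatibility of parameters: the constant $c_0$ must be chosen small enough to dominate both the inductive constant $c_0(d,s-1)$ and the window for $A$. The remaining steps are direct bookkeeping.
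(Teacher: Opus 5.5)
Your proposal is correct and follows essentially the same route as the paper: reduce to $J_2=(N-A+1)\,U_{s-1}(N)$, use $\mathcal{H}(1)$ exactly when $s=2$ and $\mathcal{H}(s-1)$ otherwise, and absorb the $O(AN^{s-1})$ boundary term using $A<\delta^{-(C_d+3)}$ together with $\delta>N^{-c_0}$. Your exact identity (subtracting $(s-1)!$ before bounding) and your explicit choice $c_0(d,s)=\min\{c_0(d,s-1),\,1/(C_d+4)\}$ are only minor bookkeeping refinements of the paper's ``$c_0$ small enough''.
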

\begin{proof}
On $\mathscr{F}_1\cap\{n_1=m_1\}$ the constraint $\gcd(n_1,m_1)\geq A$ defining $\mathscr{F}_1$ reduces to $n_1\geq A$, and the diagonal factor $e(g(n_1))\overline{e(g(n_1))}$ equals $1$.
Summing freely over $n_1\in[A,N]$ and over the remaining variables, which satisfy $n_2\cdots n_s=m_2\cdots m_s$,
\[
J_2=(N-A+1)\,U_{s-1}(N).
\]
Writing $U_{s-1}(N)=N^{s-1}\mathcal{M}_{s-1}(N)$ and expanding $(N-A+1)N^{s-1}=N^s+O(AN^{s-1})$,
\begin{equation}\label{eq:J2-expand}
J_2=N^s\,\mathcal{M}_{s-1}(N)+O\!\big(AN^{s-1}\mathcal{M}_{s-1}(N)\big).
\end{equation}

If $s=2$, $\mathcal{H}(1)$ gives $\mathcal{M}_1(N)=1$ exactly, so \eqref{eq:J2-expand} reads $J_2=N^2+O(AN)$.
The upper bound $A<\delta^{-(C_d+3)}$ together with $\delta>N^{-c_0}$ (with $c_0$ small enough) yields
$A\ll_{d}\delta N$, hence $O(AN)\ll_{d}\delta N^2$, which is alternative~(1).

If $s\geq 3$, apply the inductive hypothesis $\mathcal{H}(s-1)$ to $\mathcal{M}_{s-1}(N)$.
If it produces alternative~(2), the same $q$ serves here and we are done.
Otherwise
\[
\big|\mathcal{M}_{s-1}(N)-(s-1)!\big|\ll_{d,s}\delta(\log N)^{O(s^2)},
\]
and so $\mathcal{M}_{s-1}(N)\ll_{d,s}(\log N)^{O(s^2)}$.
 The same $A\ll_d \delta N$ also bounds the error term.
Hence \eqref{eq:J2-expand} gives
\[
\bigl|J_2-(s-1)!\,N^s\bigr|\ll_{d,s}\delta^{1/2} N^s(\log N)^{O(s^2)},
\]
which is alternative~(1).
\end{proof}

Combining Lemmas~\ref{lem:J1} and~\ref{lem:J2} proves Proposition~\ref{prop:F1}.

\begin{remark}[Symmetry among $\mathscr{F}_i$]\label{rem:Fi-symmetry}
For $i\in\{1,\ldots,s\}$, relabel $(n_1,m_i)\mapsto(m_i,n_1)$ and permute the remaining indices so that $\mathscr{F}_i$ becomes $\mathscr{F}_1$ with $\gcd(n_1,m_1)\geq A$ unchanged.  Proposition~\ref{prop:F1} therefore applies identically to $\sum_{\mathscr{F}_i}$, with the same main term $(s-1)!\,N^s$ and the same error bound.
\end{remark}

\section{Estimates on intersections}
\label{sec:intersection}

For $2\leq\kappa\leq s$ and distinct indices $\{i_1,\ldots,i_\kappa\}\subset\{1,\ldots,s\}$, we bound sums over $\mathscr{F}_{i_1}\cap\cdots\cap\mathscr{F}_{i_\kappa}$.  These intersections contribute no $N^s$ main term.

\begin{proposition}[General intersection]\label{prop:intersection}
Let $C_d>5$ be as in Proposition~\ref{prop:F1}.
Let $s\geq 2$ and let $2\leq\kappa\leq s$.
Let $\{i_1,\ldots,i_\kappa\}\subset\{1,\ldots,s\}$ be distinct.
For every polynomial $g$ of degree~$d$, all $N$ large enough depending on $d$ and~$s$, all $\delta$ with $0<\delta<1/8$,
and all integers $A$ in the window
\[
\delta^{-(C_d+2)}<A<\delta^{-(C_d+3)},
\]
at least one of the following holds:
\begin{enumerate}
    \item \[
    \big|\Sg{\mathscr{F}_{i_1}\cap\cdots\cap\mathscr{F}_{i_\kappa}}\big|
    \ll_{d,s} \delta^{1/2} N^s(\log N)^{O(s^2)}.
    \]
    \item There exists $q\in\mathbb{Z}\setminus\{0\}$ with $|q|\ll_d\delta^{-O_d(1)}$ such that $\|qg\|_{C^\infty[N]}\ll_d\delta^{-O_d(1)}$.
\end{enumerate}
\end{proposition}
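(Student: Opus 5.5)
We follow the scheme of Figure~\ref{fig:roadmap}: split into a diagonal part, bounded by a divisor count, and an off-diagonal part, bounded by the fibre argument of Lemma~\ref{lem:J1}. After permuting indices (as in Remark~\ref{rem:Fi-symmetry}) we may assume $\{i_1,\ldots,i_\kappa\}=\{1,\ldots,\kappa\}$. The set $\mathscr{F}_1\cap\cdots\cap\mathscr{F}_\kappa$ is then contained in $\mathscr{F}_1\cap\mathscr{F}_2$, so $n_1$ has $\gcd(n_1,m_1)\geq A$ and $\gcd(n_1,m_2)\geq A$. We write the sum as the diagonal piece on $\{n_1=m_1\}$ plus the off-diagonal piece $(\mathscr{F}_1\cap\cdots\cap\mathscr{F}_\kappa)^\circ$.

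\emph{Diagonal piece.} On $\{n_1=m_1\}$ the condition $\gcd(n_1,m_2)\geq A$ forces $d:=\gcd(n_1,m_2)\geq A$ and $d\mid m_2$. We bound the sum trivially by counting. After removing $n_1=m_1$, the remaining variables satisfy $n_2\cdots n_s=m_2\cdots m_s$. For fixed $m_2$, the number of $n_1\le N$ with $\gcd(n_1,m_2)\geq A$ is at most $\sum_{d\mid m_2,\,d\geq A}N/d\le N\tau(m_2)/A$. Summing over solutions of the $(s-1)$-fold equation, weighted by $\tau(m_2)$, gives via Lemma~\ref{lem:divisor} (Cauchy--Schwarz on divisor sums as in Lemma~\ref{lem:sparse-complement}) a total of $\ll N\cdot N^{s-1}(\log N)^{O(s^2)}/A$. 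Since $A>\delta^{-(C_d+2)}\geq\delta^{-1}$, this is $\ll\delta N^s(\log N)^{O(s^2)}$.

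\emph{Off-diagonal piece.} Here we reuse the matrix parameterisation of Lemma~\ref{lem:param-general} with $H=A$; by Remark~\ref{rem:param-remark} it applies to any subset of $V_A^\circ$. The intersection condition $\gcd(n_1,m_2)\geq A$, among others, is not a condition on $a_{11}$ alone, so the fibre in $a_{11}$ is no longer a full interval. To fix this we refine the parameterisation: we factor $n_1=a_{11}a_{12}\cdots$ with $a_{12}$ absorbing part of $\gcd(n_1,m_2)$. The cleanest route is a $3s$-coordinate parameterisation (the $\mathbb{Z}_{>0}^{3s}$ of the figure). We record $g_1=\gcd(n_1,m_1)$, $g_2=\gcd(n_1/g_1,m_2)$, and so on, and treat the intersection constraints as conditions on base coordinates only, except one free coordinate $a_{11}$ which runs over an interval $[A',N/M_1]$ with $A'\le A$. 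Once this structure is in place, the proof of Lemma~\ref{lem:J1} transfers verbatim. Claim~\ref{clm:J1-fibre}, via Lemma~\ref{lem:diophantine-nil}, splits fibres into good ones, which gain $\delta$, and bad ones, where $A_1\in\mathcal{R}_{B_1}$ and which gain $\delta^{1/2}$ by Cauchy--Schwarz. The remaining base sums are products of row sums bounded by $(\log N)^{O(s^2)}$ through Lemma~\ref{lem:divisor}. If instead alternative~(2) of Lemma~\ref{lem:diophantine-nil} fires, it gives alternative~(2) here.

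\emph{Main obstacle.} The hard step is making the intersection constraints compatible with a fibre that is an interval in one variable, while keeping the base-point weights of the form $\prod N/M_i$, so that the divisor sums still close with $(\log N)^{O(s^2)}$. If this fails, we would first try an alternative: drop the extra constraints $\gcd(n_1,m_{i_j})\geq A$ for $j\geq 2$ by inclusion--exclusion against sets of the form $\{\gcd(n_1,m_{i_j})<A\}$. On those sets $n_1$ is nearly determined by small gcds, and they are sparse by the argument of Lemma~\ref{lem:sparse-complement}, so each term becomes either a single-set sum (handled as in Lemma~\ref{lem:J1}, without a main term in its off-diagonal part) or a negligible set. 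The window $A>\delta^{-(C_d+2)}$ supplies the extra factor $\delta$ needed to absorb the $A$-losses that arise in these sparse counts.
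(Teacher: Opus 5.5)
Your diagonal piece is correct, and your count is a little cleaner than the paper's. The off-diagonal piece, however, has a genuine gap exactly at the step you flag, and the structure you posit cannot be achieved.

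Your extraction $g_2=\gcd(n_1/g_1,m_2)$ removes the common factor from $n_1'=n_1/g_1$, which does not involve the fibre variable. So $\gcd(n_1,m_2)=\gcd(g_1n_1',m_2)$ still varies with $g_1=a_{11}$, and the fibre is still cut. More fundamentally, suppose $n_1=bX$ and $m_1=bY$ with $X,Y,m_2$ fixed by the base. Then $\gcd(bX,m_2)$ generally varies as $b$ runs through an interval, and making it constant forces a coprimality condition on $b$. The paper therefore extracts from the \emph{fibre} part: it sets $t_j=\gcd(b_{j-1},m_j)$ successively, starting from $b_1=\gcd(n_1,m_1)$. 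This gives $n_1=bTn_1'$, $m_1=bTm_1'$, $m_j=t_jm_j'$, and the fibre is $A/T\le b\le N/(T\max\{n_1',m_1'\})$ with $\gcd(b,m_2'\cdots m_s')=1$. On this fibre every $\gcd(n_1,m_i)$ is constant (Lemma~\ref{lem:intersection-param}).

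Hence Lemma~\ref{lem:J1} does not transfer verbatim. Removing the coprimality by M\"obius inversion ($b=\ell k$ with $\ell\mid m_2'\cdots m_s'$) turns each fibre sum into $\sum_k e\bigl(g(hkn_1')-g(hkm_1')\bigr)$ with $h=\ell T$. That is a dilated polynomial $g_h(n)=g(hn)$ on a fibre of height $N/(h\max\{n_1',m_1'\})$. Three steps are missing from your proposal:
\begin{enumerate}
\item Discard the pairs with $T\ge\delta^{-1}$ or $\ell\ge\delta^{-1}$ by a divisor count that saves a factor $1/(\ell^2T^2)$.
\item For $h<\delta^{-2}$, reparameterise the base (dropping $t_j\mid m_j$) to reach the scaled form of~\eqref{eq:J1-bound} for $g_h$. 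This is where $A>\delta^{-(C_d+2)}$ is used, so that $A/h>\delta^{-C_d}$. It is not there to absorb $A$-losses in sparse counts, as you suggest.
\item Transfer a Diophantine alternative for $g_h$ back to $g$ via $K'=Kh^d$.
\end{enumerate}

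Your fallback does not work either. The argument of Lemma~\ref{lem:sparse-complement} needs \emph{every} $\gcd(n_1,m_j)$ to be small in order to force $n_1\le A^s$. By contrast, $\mathscr{F}_1\cap\{\gcd(n_1,m_2)<A\}$ contains most diagonal points with $n_1=m_1\ge A$, so it has size $\gg N^s$. Its sum equals $\Sg{\mathscr{F}_1}-\Sg{\mathscr{F}_1\cap\mathscr{F}_2}$, so the inclusion--exclusion is circular. Its off-diagonal part also has the same cut-fibre problem.
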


Without loss of generality, we may assume that $i_1=1$, since otherwise we may relabel the $m$-variables.
We then split
\begin{align*}
\Sg{\mathscr{F}_{i_1}\cap\cdots\cap\mathscr{F}_{i_\kappa}}
&=\Sg{\mathscr{F}_{i_1}\cap\cdots\cap\mathscr{F}_{i_\kappa}\cap\{n_1=m_1\}}\\
&\qquad+\Sg{\mathscr{F}_{i_1}\cap\cdots\cap\mathscr{F}_{i_\kappa}\setminus\{n_1=m_1\}}\\
&=:I_2+I_1,
\end{align*}
as in the $J_1+J_2$ split of Proposition~\ref{prop:F1}.
In Section~\ref{sec:I1} we deal with the off-diagonal part $I_1$, and in Section~\ref{sec:I2} we deal with the diagonal part $I_2$.

\subsection{\texorpdfstring{Off-diagonal part $I_1$}{Off-diagonal part I_1}}
\label{sec:I1}

We now deal with $I_1$.
Write
\[
\mathscr{F}_I^\circ
:=\mathscr{F}_{i_1}\cap\cdots\cap\mathscr{F}_{i_\kappa}\setminus\{n_1=m_1\}
\]
for the off-diagonal part of the intersection (with $i_1=1$).

Similar to the $J_1$ case, we first need a parameterisation
lemma to extract the structure of arithmetic progressions.
But different from the $J_1$ case, the multiple intersections
of $\mathscr{F}_I^\circ$ have more constraints than a single $\mathscr{F}_1$, 
making it hard to directly apply the matrix parameterisation of Lemma~\ref{lem:param-general}.
The difficulty coming from the multiple constraints is that
the extra large gcd still involves the same $n_1$, and therefore cuts the fibre in $a_{11}$.

For example, take $i_1=1$ and $i_2=2$, that is $\mathscr{F}_1\cap\mathscr{F}_2$.
On $\mathscr{F}_1$, Lemma~\ref{lem:param-general} writes $n_1=a_{11}A_1$ and $m_1=a_{11}B_1$, with fibre $a_{11}\in[A,N/M_1]$.
Membership in $\mathscr{F}_2$ is $\gcd(n_1,m_2)\ge A$.
Equivalently, $\gcd(a_{11}A_1,m_2)\ge A$.
The admissible $a_{11}$ must share a large factor with $m_2$, so the fibre is no longer an interval, and Lemma~\ref{lem:diophantine-nil} does not apply.

To make this strategy work, we introduce new parameters $t_2,\dots ,t_s$
to separate the large gcds from the other factors. Hence the remaining
parameters are fully characterised by relatively coprime conditions, which makes it convenient to pass to exponential sums over arithmetic progressions.
More precisely, we have the following lemma:
\begin{lemma}[Intersection parameterisation]\label{lem:intersection-param}
Let $I=\{i_1,\ldots,i_\kappa\}\subseteq\{1,\ldots,s\}$ with $i_1=1$.
There is an injection
$\psi\colon\mathscr{F}_I^\circ\to\mathbb{Z}_{>0}^{3s}$ sending
$(n_1,\ldots,n_s,m_1,\ldots,m_s)$ to $(b,\nu)$, where
\[
\nu:=(t_2,\ldots,t_s,n_1',n_2,\ldots,n_s,m_1',m_2',\ldots,m_s').
\]
Denote $T:=t_2\cdots t_s$. The pair $(b,\nu)$ satisfies
\[
\begin{pmatrix}
n_1 & n_2 & \cdots & n_s \\
m_1 & m_2 & \cdots & m_s
\end{pmatrix}
=
\begin{pmatrix}
b T n_1' & n_2 & \cdots & n_s \\
b T m_1' & t_2 m_2' & \cdots & t_s m_s'
\end{pmatrix},
\]
together with
\[
\gcd(n_1',m_1')=1,\qquad
n_1'\neq m_1'.
\]
Write $\pi_2$ for the projection onto the outer parameters,
\[
\pi_2(b,\nu)=\nu,
\]
forgetting $b$.
For each $\nu\in\pi_2\bigl(\psi(\mathscr{F}_I^\circ)\bigr)$, we have
\[
\pi_2^{-1}(\nu)\cap\psi(\mathscr{F}_I^\circ)
=\mathscr G(\nu),
\]
where
\[
\mathscr G(\nu)
=\left\{(b,\nu):
\frac{A}{T}\leq b\leq
\frac{N}{T\max\{n_1',m_1'\}},\quad
\gcd(b,m_j')=1\ \text{for }2\leq j\leq s
\right\}.
\]
\end{lemma}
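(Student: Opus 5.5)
The plan is to build \(\psi\) by first splitting off the common factor of \(n_1\) and \(m_1\). Then the part of that common factor shared with each \(m_j\), \(j\geq 2\), is peeled off by successive gcds, in the same spirit as the construction in Lemma~\ref{lem:param-general}. After that we check that, once the outer parameters \(\nu\) are fixed, every constraint defining \(\mathscr{F}_I^\circ\) is either independent of \(b\) or becomes a condition on \(b\) of the form in \(\mathscr G(\nu)\).

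\emph{Construction.} Given \((\vec n_s,\vec m_s)\in\mathscr{F}_I^\circ\), let \(G:=\gcd(n_1,m_1)\) and set \(n_1'=n_1/G\), \(m_1'=m_1/G\). Then \(\gcd(n_1',m_1')=1\), and \(n_1'\neq m_1'\) because \(n_1\neq m_1\). Define
\[
t_2=\gcd(G,m_2),\qquad t_j=\gcd\Bigl(G\Big/\textstyle\prod_{2\leq t<j}t_t,\ m_j\Bigr)\quad(3\leq j\leq s),
\]
and put \(b=G/T\) and \(m_j'=m_j/t_j\). By construction \(n_1=bTn_1'\), \(m_1=bTm_1'\) and \(m_j=t_jm_j'\). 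The key elementary fact is that if \(d=\gcd(x,y)\) then \(\gcd(x/d,y/d)=1\), since at each prime one of the two exponents becomes zero. Hence the residue \(G/(t_2\cdots t_j)\) is coprime to \(m_j'\). Since \(b\) divides every such residue, we get \(\gcd(b,m_j')=1\) for all \(j\geq 2\). Injectivity is immediate, because \((\vec n_s,\vec m_s)\) is recovered from \((b,\nu)\) by the displayed matrix identity.

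\emph{Fibre.} For the inclusion \(\pi_2^{-1}(\nu)\cap\psi(\mathscr F_I^\circ)\subseteq\mathscr G(\nu)\), the bound \(b\geq A/T\) is \(\gcd(n_1,m_1)=bT\geq A\), which is the \(\mathscr F_1\) condition. The upper bound on \(b\) is \(n_1,m_1\leq N\), and coprimality was shown above. For the reverse inclusion, take \(b\) in the range of \(\mathscr G(\nu)\), reconstruct the tuple, and check three things.
\begin{enumerate}
\item Membership in \(V\) holds, since the coordinates are at most \(N\) and the product identity depends only on \(\nu\): it holds for one \(b\), hence for all.
\item The remaining constraints \(\gcd(n_1,m_{i_k})\geq A\), \(k\geq 2\), do not depend on \(b\). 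Since \(t_j\mid T\mid n_1\),
\[
\gcd(n_1,m_j)=t_j\gcd\bigl(bTn_1'/t_j,\ m_j'\bigr)=t_j\gcd\bigl(Tn_1'/t_j,\ m_j'\bigr),
\]
using \(\gcd(b,m_j')=1\). So these conditions are properties of \(\nu\) alone and hold because \(\nu\) lies in the image.
\item \(\psi\) returns the same \(\nu\). First, \(\gcd(n_1,m_1)=bT\) because \(\gcd(n_1',m_1')=1\). Second, \(\gcd(bT,m_2)=t_2\gcd(bT/t_2,m_2')=t_2\), using \(\gcd(b,m_2')=1\) together with \(\gcd(T/t_2,m_2')=1\). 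The latter is a \(\nu\)-only condition inherited from the original point. The later \(t_j\) follow inductively in the same way.
\end{enumerate}

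The main obstacle is this last consistency step. It requires that the successive-gcd extraction recover exactly \(t_2,\ldots,t_s\) for every admissible \(b\), not only for the original one. The point to handle carefully is to separate the coprimality facts that involve \(b\), which are guaranteed by \(\gcd(b,m_j')=1\), from those that involve only \(T\), \(n_1'\) and the \(m_j'\), which are inherited from \(\nu\) being in the image.
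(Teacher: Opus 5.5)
Your proposal is correct and follows essentially the same route as the paper: the same successive-gcd construction $b_1=\gcd(n_1,m_1)$, $t_j=\gcd(b_{j-1},m_j)$, the same $b$-independence computation $\gcd(n_1,m_j)=t_j\gcd\bigl((T/t_j)n_1',m_j'\bigr)$, and the same reconstruction argument for the reverse inclusion. Your consistency step is more explicit than the paper's, which only appeals to uniqueness of the procedure. You isolate the inherited $\nu$-only condition $\gcd(t_{j+1}\cdots t_s,m_j')=1$, and this is exactly what makes the successive gcds return the same $t_j$ for every admissible $b$; in the reverse inclusion, also state explicitly that $\gcd(n_1,m_1)=bT\geq A$ and $n_1\neq m_1$.
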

We take a moment to explain this lemma.
As in the $\mathscr{F}_1$ case we want to have a parameterisation
and then conduct exponential sums over the fibres, where the base points
can be grouped into good and bad parts.
The outer parameters \(\nu\) are the new base points, playing the role of \(\mathscr{F}_1'\).
The fibre variable \(b\) plays the role of \(a_{11}\).
The factors \(t_j\) are introduced to restore the information of the common divisors.
The lemma describes the fibres of $\pi_2$ on the image of $\psi$.
The fibre is exactly one interval satisfying some coprime conditions, where
the coprime conditions can be removed by M\"obius inversion in the following arguments.

\begin{proof}
We first construct $\psi$ and check injectivity, then verify the fibre description.

\smallskip\noindent\textbf{Construction.}
Set
\[
b_1=\gcd(n_1,m_1),\qquad
n_1'=\frac{n_1}{b_1},\qquad
m_1'=\frac{m_1}{b_1}.
\]
Then $\gcd(n_1',m_1')=1$. Recursively, for $j=2,\ldots,s$, define
\[
t_j=\gcd(b_{j-1},m_j),\qquad
b_j=\frac{b_{j-1}}{t_j},\qquad
m_j'=\frac{m_j}{t_j}.
\]
Write $b$ for $b_s$, and write $T$ for $t_2\cdots t_s$. Thus $b_1=bT$, so $n_1=bTn_1'$ and $m_1=bTm_1'$.
Note that from the construction, one immediately sees that $n_1'\neq m_1'$, that $\gcd(b_j,m_j')=1$ for $2\leq j\leq s$, and hence that $\gcd(b,m_j')=1$.
Also note that this procedure uniquely determines the parameterisation.

\smallskip\noindent\textbf{Injectivity.}
The map is injective because the pair is recovered as $n_1=bTn_1'$, $m_1=bTm_1'$, the coordinates $n_i$ for $i\geq 2$ as in $\nu$, and $m_j=t_jm_j'$ for $j\geq 2$.

\smallskip\noindent\textbf{Fibre.}
Fix a $\nu\in \pi_2\bigl(\psi(\mathscr{F}_I^\circ)\bigr)$. Write
\[
\nu=(t_2,\ldots,t_s,n_1',n_2,\ldots,n_s,m_1',m_2',\ldots,m_s'),
\]
and denote $T:=t_2\cdots t_s$.
Note that this $\nu$ arises from a construction.  So it naturally satisfies the properties $\gcd(n_1',m_1')=1$ and $n_1'\neq m_1'$ as noted in the construction.
We check the two inclusions between the fibre $\pi_2^{-1}(\nu)\cap\psi(\mathscr{F}_I^\circ)$ and $\mathscr G(\nu)$.

We first show that $\pi_2^{-1}(\nu)\cap\psi(\mathscr{F}_I^\circ)$ is contained in $\mathscr G(\nu)$.
Take an element of this fibre: it is of the form $(b,\nu)=\psi(n_1,\ldots,n_s,m_1,\ldots,m_s)$ for some $(n_1,\ldots,n_s,m_1,\ldots,m_s)\in\mathscr{F}_I^\circ$.
We first check the range of $b$.
Since $n_1=bTn_1'$ and $m_1=bTm_1'$, the bounds $n_1,m_1\leq N$ give
\[
b\leq\frac{N}{T\max\{n_1',m_1'\}}.
\]
The point lies in $\mathscr F_1$, so $\gcd(n_1,m_1)\geq A$.
From the construction this gcd equals $bT$, hence $b\geq A/T$.
The coprime conditions $\gcd(b,m_j')=1$ for $2\leq j\leq s$ likewise come from the construction.
Thus $(b,\nu)\in\mathscr G(\nu)$.

We next show that $\mathscr G(\nu)$ is contained in $\pi_2^{-1}(\nu)\cap\psi(\mathscr{F}_I^\circ)$.
Let $(b,\nu)$ be a point of $\mathscr G(\nu)$.
Our strategy is to reconstruct the integral vector $(n_1,\ldots,n_s,m_1,\ldots,m_s)$ from $(b,\nu)$ and check that it lies in $\mathscr{F}_I^\circ$ and that its image under $\psi$ is exactly $(b,\nu)$.

We reconstruct $n_1=bTn_1'$ and $m_1=bTm_1'$, together with $n_i$ as in $\nu$ for $i\geq 2$ and $m_j=t_j m_j'$ for $j\geq 2$.

We first check that the reconstructed integral vector $(n_1,\ldots,n_s,m_1,\ldots,m_s)$ lies in $\mathscr{F}_I^\circ$.
In order to check this, we must check that it lies in $V$, that it lies in every $\mathscr F_i$ for $i\in I$, and that $n_1\neq m_1$.
Note that $(b,\nu)\in \mathscr G(\nu)$, so $b$ satisfies
\[
\tfrac{A}{T}\leq b\leq\tfrac{N}{T\max\{n_1',m_1'\}}, \quad \gcd(b,m_j')=1\ \text{for }2\leq j\leq s.
\]

We first check membership in $V$.
The upper bound $b\leq N/(T\max\{n_1',m_1'\})$ gives $n_1=bTn_1'\leq N$ and $m_1=bTm_1'\leq N$. The coordinates $n_2,\ldots,n_s$ appear in $\nu$, hence are at most $N$. For $j\geq 2$, the reconstructed $m_j=t_j m_j'$ is the corresponding coordinate of the original point in $V$ that produced $\nu$, hence is also at most $N$. The coordinates are positive integers. The product identity  holds because $\nu$ arises from a point in the image of $\psi$, so $n_1'n_2\cdots n_s=m_1'(t_2m_2')\cdots(t_sm_s')$, and multiplying by $bT$ gives the identity $n_1\cdots n_s=m_1\cdots m_s$.

We next check membership in $\mathscr F_1$. Note that $\gcd(n_1',m_1')=1$, so $\gcd(n_1,m_1)=bT\geq A$.
We next check the condition $n_1\neq m_1$. As recorded above one has $n_1'\neq m_1'$, and multiplying by $bT\geq 1$ yields $n_1=bTn_1'\neq bTm_1'=m_1$.

We next check membership in each $\mathscr F_i$ for $i\in I\setminus\{1\}$.
For each such $i$, reconstruction gives $\gcd(n_1,m_i)=\gcd(bTn_1',t_im_i')$.
Since $t_i\mid T$, we may factor $t_i$ out of the gcd:
\[
\gcd(bTn_1',t_im_i')=t_i\gcd\bigl(b(T/t_i)n_1',m_i'\bigr).
\]
Since $\gcd(b,m_i')=1$, the remaining gcd drops $b$:
\[
\gcd\bigl(b(T/t_i)n_1',m_i'\bigr)=\gcd\bigl((T/t_i)n_1',m_i'\bigr).
\]
Hence $\gcd(n_1,m_i)=\gcd(Tn_1',t_im_i')$, which is independent of $b$.
Thus as $b$ varies in $\mathscr{G}(\nu)$, the reconstructed
\allowbreak$(n_1,\ldots,n_s,m_1,\ldots,m_s)$ has constant gcd.
Since $\nu\in \pi_2\bigl(\psi(\mathscr{F}_I^\circ)\bigr)$, there exists an element $\xi\in \mathscr{F}_I^\circ$ such that $\nu=\pi_2(\psi(\xi))$.
We may write $\psi(\xi)=(b_0,\nu)$.
By the first inclusion, $(b_0,\nu)$ is contained in $\mathscr G(\nu)$.
Since we have shown above that the reconstructed tuple has constant gcd throughout the fibre $\mathscr{G}(\nu)$,
we have that the gcds $\gcd(n_1,m_i)$ for $i\in I\setminus\{1\}$ are all the same as those of the original element $\xi$.
Thus the reconstructed tuple lies in $\mathscr{F}_I^\circ$.

Next we check that the image of
the reconstructed integral vector $(n_1,\ldots,n_s,m_1,\ldots,m_s)$ under $\psi$ is exactly $(b,\nu)$.
Since $\nu$ arises from the construction, one has $\gcd(n_1',m_1')=1$, so the first step recovers $b_1=bT$. Since the procedure uniquely determines the parameterisation, and $\gcd(b,m_j')=1$ for $2\leq j\leq s$, the successive steps recover the same $t_j$ and $m_j'$, and thus $\psi(n_1,\ldots,n_s,m_1,\ldots,m_s)=(b,\nu)$.
\end{proof}

We now deal with $I_1$. The following lemma is the goal of this section.

\begin{lemma}[Off-diagonal part $I_1$]\label{lem:I1}
Let the parameters be as in Proposition~\ref{prop:intersection}.
Recall
\[
I_1=\Sg{\mathscr{F}_I^\circ}.
\]
At least one of the following holds:
\begin{enumerate}
    \item $|I_1|\ll_{d,s} \delta^{1/2} N^s(\log N)^{O(s^2)}$.
    \item There exists $q\in\mathbb{Z}\setminus\{0\}$ with $|q|\ll_d\delta^{-O_d(1)}$ such that $\|qg\|_{C^\infty[N]}\ll_d\delta^{-O_d(1)}$.
\end{enumerate}
\end{lemma}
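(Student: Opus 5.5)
Assume alternative~(2) fails throughout. The plan mirrors the proof of Lemma~\ref{lem:J1}, with Lemma~\ref{lem:intersection-param} in place of Lemma~\ref{lem:param-general}. Via $\psi$ we write
\[
I_1=\sum_{\nu\in\pi_2(\psi(\mathscr{F}_I^\circ))}\ \prod_{i\geq 2}e\bigl(g(n_i)-g(t_im_i')\bigr)\sum_{(b,\nu)\in\mathscr G(\nu)} e\bigl(g(bTn_1')-g(bTm_1')\bigr),
\]
so that
\[
|I_1|\leq\sum_\nu|S_\nu|,\qquad S_\nu:=\sum_{(b,\nu)\in\mathscr G(\nu)}e\bigl(g(bTn_1')-g(bTm_1')\bigr).
\]
The coprimality constraints $\gcd(b,m_j')=1$ are removed by M\"obius inversion: with $P=\prod_{j\geq2}m_j'$,
\[
S_\nu=\sum_{e\mid P}\mu(e)\sum_{b'\in L_e}e\bigl(g(b'eTn_1')-g(b'eTm_1')\bigr),
\qquad
L_e=\Bigl[\tfrac{A}{eT},\ \tfrac{N}{eT\max\{n_1',m_1'\}}\Bigr].
\]
Each inner sum is a sum of $e(g(nc)-g(nD))$ over an interval, with $c=eT\min\{n_1',m_1'\}$ and $D=eT\max\{n_1',m_1'\}$. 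We treat $n_1'<m_1'$ and the symmetric case separately, exactly as before.

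For $D\leq N\delta^{C_d}$, the inner sums are handled by the long/short fibre trick of Claim~\ref{clm:J1-fibre}. When the left endpoint $A/(eT)$ is $\ll\delta^{-(C_d+3)}$ the interval is legal for Lemma~\ref{lem:diophantine-nil}; otherwise we write the sum as a difference of two initial segments, splitting further dyadically if the length is not comparable to $N/D$. This yields an exceptional set $\mathcal R_D$ of size $\ll_d\delta D$ such that the inner sum is $O(\delta N/D)$ whenever $c\notin\mathcal R_D$. The fibres with $D>N\delta^{C_d}$ are short (length $\ll\delta^{-C_d}$) and are bounded trivially. The divisor sum over $e\mid P$ costs only $\tau(P)$, which is absorbed into the $(\log N)^{O(s^2)}$ factor after averaging.

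Next we sum over base points. For $\Sigma_{\mathrm I}$ (good fibres), bound $\sum_\nu N/(T\max\{n_1',m_1'\})$ by dropping $b$, i.e.\ by the count of tuples with $n_1\cdots n_s=m_1\cdots m_s$ where $n_1$ ranges freely. Using $\tau$-counts for the factorisations of $m_j=t_jm_j'$ and Lemma~\ref{lem:divisor}, this gives $\ll N^s(\log N)^{O(s^2)}$. For $\Sigma_{\mathrm{II}}$ (bad fibres), fix everything except the smaller of $n_1',m_1'$ (say $n_1'$), so that the weight $N/(eT m_1')$ is constant. Summing over $n_1'$ in $\mathcal R_D/(eT)$, weighted by the multiplicity with which the product constraint determines the remaining variables, Cauchy--Schwarz with the second bound of Lemma~\ref{lem:divisor} gives a saving of $\delta^{1/2}$, as in the treatment of $\Sigma_{\mathrm{II}}$ in Lemma~\ref{lem:J1}.

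The main obstacle is the bad-fibre count. The product constraint $n_1'n_2\cdots n_s=m_1'\prod_j t_jm_j'$ couples $n_1'$ to the other variables, so, unlike the first row in Lemma~\ref{lem:J1}, $n_1'$ cannot simply be summed freely. The first thing to try is to compose with the matrix parameterisation of Lemma~\ref{lem:param-general} applied to the scaled solution $(n_1',n_2,\ldots;m_1',t_2m_2',\ldots)$ (cf.\ Remark~\ref{rem:param-remark}). Then $n_1'$ becomes a free row product, and the exceptional set $\mathcal R_D$, which depends only on $D$ and hence only on the fixed other row, is handled through the bound $\sum_{n\in\mathcal R}\tau_{s-1}(n)\ll\delta^{1/2}D(\log N)^{O(s^2)}$. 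The factors of $T$ and $e$ must be tracked so that this dependence is preserved.
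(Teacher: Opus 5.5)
There is a genuine gap, and it sits where you say the factors of $T$ and $e$ ``must be tracked''.

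You apply Lemma~\ref{lem:diophantine-nil} to $g$ itself with $c=eTn_1'$ and $D=eTm_1'$. That lemma only gives an exceptional set $\mathcal R_D\subset\{1,\dots,D-1\}$ with $|\mathcal R_D|\ll_d\delta D$. The values of $c$ that actually occur are multiples of $h:=eT$, and there are only about $D/h$ of them. So the bound on $\mathcal R_D$ only shows that the bad $n_1'$ have relative density $\ll\min(\delta h,1)$ in $[1,m_1']$. Your Cauchy--Schwarz step therefore yields a factor $(\delta h)^{1/2}$, not $\delta^{1/2}$, and for $h\gg\delta^{-1}$ there is no saving at all.

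The same problem hides in the fibres with $D>N\delta^{C_d}$ that you bound trivially. Such a fibre is nonempty only if $A/(eT)\le N/D<\delta^{-C_d}$, i.e.\ $eT>A\delta^{C_d}>\delta^{-2}$. The trivial bound is acceptable only after proving that configurations with large $eT$ carry total mass $\ll\delta^{1/2}N^s(\log N)^{O(s^2)}$. Nothing in the proposal proves this, and averaging $\tau(P)$ over $e\mid P$ does not address it.

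The paper supplies both missing ingredients.

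First, in Pieces~I and~II, pairs $(\nu,\ell)$ with $T(\nu)\ge\delta^{-1}$ or $\ell\ge\delta^{-1}$ are bounded by counting alone. For fixed $T,\ell,k$, the number of admissible $\nu$ is at most $N^s\tau_{s-1}(T)\tau_{s-1}(\ell)\tau_s(\ell T)(2s\log N)^{s^2-1}/(k\ell^2T^2)$. One factor $\ell T$ comes from $\ell kTn_1'\le N$, and the other from $\ell T\mid n_1'n_2\cdots n_s$. The tails $\sum_{T\ge\delta^{-1}}$ and $\sum_{\ell\ge\delta^{-1}}$ then produce the saving.

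Second, in Piece~III, where $h=\ell T<\delta^{-2}$, the lemma is not applied to $g$ at scale $hm_1'$. Instead the paper:
\begin{enumerate}
\item fixes $(t_2,\ldots,t_s)$ and $\ell$;
\item drops the divisibility conditions $t_j\mid m_j$;
\item applies $\phi$ to $\mathscr V$, so that $a_{11}=1$, $n_1'=A_1$ and $m_1'=B_1$;
\item runs the argument of Lemma~\ref{lem:J1} for the rescaled polynomial $g_h(n)=g(hn)$, with $N$ and $A$ replaced by $N/h$ and $A/h$.
\end{enumerate}
The exceptional set then lives in $\{1,\dots,B_1-1\}$ with genuine density $\delta$. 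The window $A>\delta^{-(C_d+2)}$ guarantees the height $A/h>\delta^{-C_d}$. A Diophantine $K$ for $g_h$ becomes $K'=Kh^d$ for $g$, at cost $h^d\le\delta^{-2d}$.

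Given the counting bound, your unscaled approach could be salvaged for $eT$ below a small power of $\delta^{-1}$, at the price of a weaker power of $\delta$. As written, however, the claimed $\delta^{1/2}$ saving is not justified.
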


We now explain the proof strategy for Lemma~\ref{lem:I1}.
Write \(\mathscr{P}=\pi_2\bigl(\psi(\mathscr{F}_I^\circ)\bigr)\) for the finite set of outer parameters.
The strategy is illustrated in Figure~\ref{fig:I1}.
\begin{figure}[ht]
\centering
\resizebox{0.95\textwidth}{!}{%
\begin{tikzpicture}[
  font=\small,
  >=Stealth,
  arr/.style={->, thick},
]
  \node[draw=figink, rounded corners=2pt, fill=figplane,
        minimum width=1.9cm, minimum height=1.7cm]
    (FI) at (0,2.45) {};
  \coordinate (Ic) at ($(FI.center)+(0,0.10)$);
  \foreach \ang in {90,210,330} {
    \fill[plum!34] ($(Ic)+({0.18*cos(\ang)},{0.18*sin(\ang)})$) circle (0.34);
  }
  \begin{scope}
    \clip ($(Ic)+({0.18*cos(90)},{0.18*sin(90)})$) circle (0.34);
    \clip ($(Ic)+({0.18*cos(210)},{0.18*sin(210)})$) circle (0.34);
    \fill[gold] ($(Ic)+({0.18*cos(330)},{0.18*sin(330)})$) circle (0.34);
  \end{scope}
  \foreach \ang in {90,210,330} {
    \draw[figink!75, line width=0.5pt]
      ($(Ic)+({0.18*cos(\ang)},{0.18*sin(\ang)})$) circle (0.34);
  }
  \node[font=\scriptsize, anchor=north, inner sep=0pt]
    at ($(FI.center)+(0,-0.50)$) {$\mathscr{F}_I^\circ$};

  \node[draw, rounded corners=2pt, minimum width=1.9cm, minimum height=1.7cm]
    (Zbox) at (5.15,2.45) {$\mathbb{Z}_{>0}^{3s}$};

  \draw[arr] (FI.east) -- node[above, font=\scriptsize]{parameterisation} (Zbox.west);

  \coordinate (O) at (4.35,-1.15);
  \coordinate (U) at (1.35,0.38);
  \coordinate (V) at (-1.05,0.52);
  \def\ah{0.42}
  \fill[figplane] (O) -- ++(U) -- ++(V) -- ++($-1*(U)$) -- cycle;
  \draw[figink] (O) -- ++(U) -- ++(V) -- ++($-1*(U)$) -- cycle;
  \node[below=1pt] at ($(O)+0.55*(U)$) {$\mathscr{P}$};
  \node[font=\scriptsize, left=8pt] at ($(O)+0.45*(V)$) {base};
  \node[font=\scriptsize, anchor=east, inner sep=1pt]
    at ($(O)+0.08*(U)+0.78*(V)+(0,1.05)$) {fibre};

  \draw[arr] (Zbox.south) -- node[right, font=\scriptsize]{projection} ++(0,-0.62);

  \foreach \s/\t/\h in {
    0.20/0.80/0.58,
    0.70/0.74/0.92,
    0.42/0.50/1.28,
    0.82/0.36/0.68,
    0.26/0.20/1.02,
    0.60/0.16/1.18%
  } {
    \coordinate (P) at ($(O)+\s*(U)+\t*(V)$);
    \coordinate (Bot) at ($(P)+(0,\ah)$);
    \coordinate (Top) at ($(Bot)+(0,\h)$);
    \draw[figink!40, dotted] (P) -- (Bot);
    \draw[figblue, line width=1.35pt] (Bot) -- (Top);
    \fill[figblue] (Bot) circle (1.15pt);
    \fill[figblue] (Top) circle (0.7pt);
  }

  \node[inner sep=1pt, font=\scriptsize] (Mat) at (10.55,-0.05) {%
    $\begin{pmatrix}
    1 & a_{12} & \cdots & a_{1s} \\
    a_{21} & a_{22} & \cdots & a_{2s} \\
    \vdots & \vdots & \ddots & \vdots \\
    a_{s1} & a_{s2} & \cdots & a_{ss}
    \end{pmatrix}$};
  \node[above=2pt, font=\scriptsize] at (Mat.north) {reparameterised base};

  \draw[arr] ($(O)+(U)+0.15*(V)$) -- node[above, font=\scriptsize]{reparameterisation} (Mat.west);
\end{tikzpicture}%
}
\caption{The map \(\psi\) of Lemma~\ref{lem:intersection-param} sends \(\mathscr{F}_I^\circ\)
into \(\mathbb{Z}_{>0}^{3s}\).  The projection \(\pi_2\) forgets the first coordinate
 leaving the base \(\mathscr{P}\) with a fibre.
We group the fibres in the same spirit as in Lemma \ref{lem:J1}. A reparameterisation of the base is needed.}
\label{fig:I1}
\end{figure}

The estimate follows Figure~\ref{fig:I1} in four steps.
From parameterisation Lemma \ref{lem:intersection-param}, we may bound $I_1$
by first summing over the fibre and then summing over the base.
The idea is similar to the proof of Lemma \ref{lem:J1}, but we have to overcome
more difficulties.
The first difficulty is that the fibre is not an interval, but a more complicated set with coprime constraints.
By using M\"obius inversion, we may remove the coprime constraints by introducing a new parameter $\ell$.
The second difficulty is that although we want to classify the fibre into good and bad just like in Lemma \ref{lem:J1}, we have to face the case that the 
fibres have height too small to be classified when $\ell$ and $t_2\cdots t_s$ are large.
Hence we need to split the sum into three pieces, and estimate the first two pieces where the height is small by the divisor bounds of Lemma~\ref{lem:divisor}.
The third piece is where the height is large, and we need to reparameterise the base for this piece to reduce to Lemma~\ref{lem:J1}.
\begin{proof}
\smallskip\noindent\textbf{Step 1 (fibre sums).}
We start from
\[
I_1=\Sg{\mathscr{F}_I^\circ}
=\sum_{(n_1,\ldots,n_s,m_1,\ldots,m_s)\in\mathscr{F}_I^\circ}
\prod_{i=1}^s e\bigl(g(n_i)-g(m_i)\bigr).
\]
We use Lemma~\ref{lem:intersection-param} to parameterise the elements of $\mathscr{F}_I^\circ$.
Let $\mathscr P=\pi_2\bigl(\psi(\mathscr{F}_I^\circ)\bigr)$ be the finite set of outer parameters, as above. By Lemma~\ref{lem:intersection-param}, each $\nu\in\mathscr P$ has the form
\[
\nu=(t_2,\ldots,t_s,n_1',n_2,\ldots,n_s,m_1',m_2',\ldots,m_s').
\]
In Figure~\ref{fig:I1} this is the base after projection.
For $\nu\in\mathscr P$ put
\[
T(\nu):=t_2\cdots t_s,\qquad
Q(\nu):=m_2'\cdots m_s',\qquad
M(\nu):=\max\{n_1',m_1'\}.
\]
Lemma~\ref{lem:intersection-param}
gives the fibre with \(\gcd(b,m_j')=1\) for \(2\leq j\leq s\). Since \(\gcd(b,Q(\nu))=1\) if and only if \(\gcd(b,m_j')=1\) for \(2\leq j\leq s\), we may write
\[
\mathscr G(\nu)
=\Bigl\{(b,\nu):
\tfrac{A}{T(\nu)}\leq b\leq\tfrac{N}{T(\nu)M(\nu)},\quad
\gcd(b,Q(\nu))=1\Bigr\}.
\]
The identities of Lemma~\ref{lem:intersection-param} write the coordinates in terms of \((b,\nu)\), and the phase becomes
\[
\prod_{i=1}^s e\bigl(g(n_i)-g(m_i)\bigr)
=\Omega(\nu)\,e\bigl(g(bT(\nu)n_1')-g(bT(\nu)m_1')\bigr),
\]
where \(\Omega(\nu):=\prod_{i=2}^s e\bigl(g(n_i)-g(t_im_i')\bigr)\)
is unimodular and independent of $b$.
Substituting into \(I_1\) therefore gives
\begin{equation}\label{eq:I1-outer-decomposition}
I_1
=\sum_{\nu\in\mathscr P}\Omega(\nu)\,S(\nu),
\qquad
S(\nu)
:=\sum_{(b,\nu)\in\mathscr G(\nu)}
e\bigl(g(bT(\nu)n_1')-g(bT(\nu)m_1')\bigr).
\end{equation}
The inner sum $S(\nu)$ is the exponential sum over the fibre above the base point $\nu$.
The fibre is exactly an interval with some additional coprime constraints, and the coprimality
$\gcd(b,Q(\nu))=1$ is removed by M\"obius inversion by adding a new parameter in Step~2.

\smallskip\noindent\textbf{Step 2 (M\"obius inversion).}
In this step, our goal is to remove the coprime constraints by introducing a new parameter $\ell$.
Write
\[
\mathscr B(\nu)
:=\Bigl\{b\in\mathbb Z_{>0}:
\tfrac{A}{T(\nu)}\leq b\leq\tfrac{N}{T(\nu)M(\nu)}\Bigr\}
\]
for the interval without the coprimality constraint, so \(\mathscr G(\nu)=\bigl\{(b,\nu):b\in\mathscr B(\nu),\ \gcd(b,Q(\nu))=1\bigr\}\).
The coprimality indicator is
\[
\mathbf 1_{\gcd(b,Q(\nu))=1}
=\sum_{\ell\mid\gcd(b,Q(\nu))}\mu(\ell),
\]
where \(\ell\mid\gcd(b,Q(\nu))\) if and only if \(\ell\mid Q(\nu)\) and \(\ell\mid b\).
Inserting the identity into the constrained sum over \(\mathscr B(\nu)\) and changing the order of two finite sums therefore gives, for any function \(H\) on \(\mathbb Z_{>0}\),
\begin{equation}\label{eq:mobius-step}
\sum_{\substack{b\in\mathscr B(\nu)\\\gcd(b,Q(\nu))=1}}H(b)
=\sum_{\ell\mid Q(\nu)}\mu(\ell)
\sum_{\substack{k\geq1\\\ell k\in\mathscr B(\nu)}}H(\ell k).
\end{equation}
The fibre is rewritten as an interval in $k$ of height $N/(\ell T(\nu)M(\nu))$.
Apply \eqref{eq:mobius-step} with
$H(b)=e\bigl(g(bT(\nu)n_1')-g(bT(\nu)m_1')\bigr)$.  Then
\begin{equation}\label{eq:mobius-product}
S(\nu)=\sum_{\ell\mid Q(\nu)}\mu(\ell)\,S(\nu,\ell),
\end{equation}
where
\[
S(\nu,\ell)
:=\sum_{\substack{k\geq1\\\tfrac{A}{\ell T(\nu)}\leq k\leq\tfrac{N}{\ell T(\nu)M(\nu)}}}
e\bigl(g(\ell kT(\nu)n_1')-g(\ell kT(\nu)m_1')\bigr)
\]
Taking absolute values in
\eqref{eq:I1-outer-decomposition}--\eqref{eq:mobius-product}, and using
$|\mu(\ell)|\leq1$ together with $|\Omega(\nu)|=1$,
\begin{equation}\label{eq:I1-Mobius}
|I_1|
\leq\sum_{\nu\in\mathscr P}\sum_{\ell\mid Q(\nu)}\bigl|S(\nu,\ell)\bigr|.
\end{equation}
 
For the form of $S(\nu,\ell)$, we view it as the exponential sum over
a fibre, as drawn in Figure~\ref{fig:I1-short}.
In the same spirit as in the proof of Lemma \ref{lem:J1}, we want to classify these fibres into good and bad.
But when we look at the height of the fibre, that is, $N/\ell T(\nu)M(\nu)$, with $\ell$ and $\nu$
fixed.
These additional parameters $\ell$ and $T(\nu)$ will cause trouble when we try to 
copy the same argument as in Lemma~\ref{lem:J1}, since
when $\ell$ and $T(\nu)$ are large, the height of the fibre is too small to be an input of Lemma~\ref{lem:diophantine-nil}.
Our solution, as shown in the next step, is to isolate those large $\ell$ and $\nu$ with $T(\nu)$ large, and then apply the same argument as in Lemma~\ref{lem:J1} to bound the rest.

\smallskip\noindent\textbf{Step 3 (three-piece split).}
As noted above, we need to isolate those large $\ell$ and $\nu$ with $T(\nu)$ large, and then apply the same argument as in Lemma~\ref{lem:J1} to bound the rest.
Partition the pairs $(\nu,\ell)$ in \eqref{eq:I1-Mobius} according to the
size of $T(\nu)$ and $\ell$, and write
\begin{align*}
\Xi_{\mathrm I}
&:=\sum_{\substack{\nu\in\mathscr P\\ T(\nu)\geq\delta^{-1}}}
\sum_{\ell\mid Q(\nu)}\bigl|S(\nu,\ell)\bigr|,\\
\Xi_{\mathrm{II}}
&:=\sum_{\nu\in\mathscr P}
\sum_{\substack{\ell\mid Q(\nu)\\ \ell\geq\delta^{-1}}}
\bigl|S(\nu,\ell)\bigr|,\\
\Xi_{\mathrm{III}}
&:=\sum_{\substack{\nu\in\mathscr P\\ T(\nu)<\delta^{-1}}}
\sum_{\substack{\ell\mid Q(\nu)\\ \ell<\delta^{-1}}}
\bigl|S(\nu,\ell)\bigr|.
\end{align*}
Piece~I is the contribution of $\nu$ with large $T(\nu)$.
Piece~II is the contribution of large $\ell$.
Piece~III is what remains when both $T(\nu)$ and $\ell$ are small.
There is a harmless double counting when both $T(\nu)$ and $\ell$ are at
least $\delta^{-1}$.
Hence
\[
|I_1|
\leq
\Xi_{\mathrm I}+\Xi_{\mathrm{II}}+\Xi_{\mathrm{III}}.
\]
Pieces~I and~II are estimated by the divisor bounds of Lemma~\ref{lem:divisor} in this step;
The third piece contains the remaining base points with small $T(\nu)$ and $\ell$, where those base points
correspond to fibres of height large enough to be classified in the next step.

Figure~\ref{fig:I1-short} records this isolation.

\begin{figure}[ht]
\centering
\begin{tikzpicture}[
  font=\small,
  scale=0.78,
  every node/.append style={transform shape},
]
  \coordinate (O) at (0,0);
  \coordinate (U) at (5.6,0.48);
  \coordinate (V) at (-1.65,1.05);
  \def\ah{0.88}

  \fill[figplane] (O) -- ++(U) -- ++(V) -- ++($-1*(U)$) -- cycle;
  \draw[figink] (O) -- ++(U) -- ++(V) -- ++($-1*(U)$) -- cycle;
  \node[below=2pt] at ($(O)+0.50*(U)$) {pairs $(\nu,\ell)$};

  \foreach \s/\t/\len in {
    0.07/0.22/0.26,
    0.16/0.68/0.18,
    0.26/0.42/0.30,
    0.11/0.88/0.15,
    0.32/0.12/0.22%
  } {
    \coordinate (P) at ($(O)+\s*(U)+\t*(V)$);
    \coordinate (Bot) at ($(P)+(0,\ah)$);
    \coordinate (Top) at ($(Bot)+(0,\len)$);
    \draw[figink!40, dotted] (P) -- (Bot);
    \draw[goldpale, line width=2pt] (Bot) -- (Top);
    \draw[gold, line width=1.15pt] (Bot) -- (Top);
    \fill[gold] (Bot) circle (1.4pt);
    \fill[gold] (Top) circle (0.9pt);
  }

  \foreach \s/\t/\len in {
    0.50/0.78/1.38,
    0.60/0.28/1.18,
    0.72/0.58/1.45,
    0.84/0.16/1.22,
    0.66/0.90/1.10%
  } {
    \coordinate (P) at ($(O)+\s*(U)+\t*(V)$);
    \coordinate (Bot) at ($(P)+(0,\ah)$);
    \coordinate (Top) at ($(Bot)+(0,\len)$);
    \draw[figink!40, dotted] (P) -- (Bot);
    \draw[figblue, line width=1.35pt] (Bot) -- (Top);
    \fill[figblue] (Bot) circle (1.3pt);
    \fill[figblue] (Top) circle (0.7pt);
  }

  \node[font=\scriptsize, align=center, text=goldink, anchor=south]
    at ($(O)+0.18*(U)+0.48*(V)+(0,\ah)+(0,0.48)$)
    {small height\\[-1pt]not classified};
  \node[font=\scriptsize, align=center, text=figblue, anchor=south]
    at ($(O)+0.70*(U)+0.50*(V)+(0,\ah)+(0,1.64)$)
    {enough height\\[-1pt]classify};
  \node[font=\scriptsize, left=8pt] at ($(O)+0.50*(V)$) {base};
\end{tikzpicture}
\caption{
 Each fibre is the floating interval.
Gold: small height, excluded from classification
(Pieces~I and~II).  Purple: enough height to enter the good/bad classification
of Lemma~\ref{lem:J1} (Piece~III).}
\label{fig:I1-short}
\end{figure}

Next, we deal with Piece~I and ~II.

\smallskip\noindent\textbf{Piece I ($T(\nu)\geq\delta^{-1}$).}
Recall that each $\nu\in\mathscr P$ is
\begin{align}\label{eq:nu-decomposition}
\nu=(t_2,\ldots,t_s,n_1',n_2,\ldots,n_s,m_1',m_2',\ldots,m_s'),
\end{align}
and write $T(\nu)=t_2\cdots t_s$, $Q(\nu)=m_2'\cdots m_s'$.

Piece I is dealt with by the divisor bounds of Lemma~\ref{lem:divisor}.
We do not use exponential sum bounds here.
The sum $S(\nu,\ell)$ runs over those $k$ with $\ell k\in\mathscr B(\nu)$, and each term has absolute value $1$. We use the trivial bound. Hence
\[
\Xi_{\mathrm I}
\leq
\sum_{\substack{\nu\in\mathscr P\\ T(\nu)\geq\delta^{-1}}}
\sum_{\ell\mid Q(\nu)}
\sum_{\substack{k\geq1\\ \ell k\in\mathscr B(\nu)}}1.
\]
We need to bound the right hand side of the above, while the
only budget we have is that the sum is over large $T(\nu)$.
Our strategy is to change the order of summation and count the number of $\nu$ with fixed $T,\ell,k$.
Changing the order, the right hand side of the above equals
\[
\sum_{T\geq\delta^{-1}}\sum_{\ell\geq1}\sum_{k\geq1}
\#\bigl\{\nu\in\mathscr P:\ T(\nu)=T,\ \ell\mid Q(\nu),\ \ell k\in\mathscr B(\nu)\bigr\}.
\]
The above is saying that we first fix $T,\ell,k$ and count the number of base points $\nu$ that satisfy the conditions.
The observation is that the number of $\nu$ with large $T(\nu)$ is well bounded.

\begin{clm}
Let $T,\ell,k\geq 1$.  Then
\begin{equation}\label{eq:pieces-divisor-majorant}
\#\bigl\{\nu\in\mathscr P:\ T(\nu)=T,\ \ell\mid Q(\nu),\ \ell k\in\mathscr B(\nu)\bigr\}
\leq
\frac{N^s}{k\ell^2T^2}\,
\tau_{s-1}(T)\tau_{s-1}(\ell)\tau_s(\ell T)(2s\log N)^{s^2-1}.
\end{equation}
\end{clm}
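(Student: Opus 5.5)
The plan is to count the base points $\nu$ directly, coordinate by coordinate, using only the structure from Lemma~\ref{lem:intersection-param}: the identities $m_j=t_jm_j'$ for $j\geq2$, $\gcd(n_1',m_1')=1$, and the product identity
\[
n_1'n_2\cdots n_s=m_1'\,T\,m_2'\cdots m_s' .
\]
No exponential sums enter. Fix $T,\ell,k$ and let $\nu$ be counted.

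\emph{Constraints on the coordinates.} The condition $\ell k\in\mathscr B(\nu)$ gives $T\ell k\max\{n_1',m_1'\}\leq N$. Hence both $n_1'$ and $m_1'$ lie in $[1,X]$ with $X:=N/(T\ell k)$.

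\emph{The factors $t_j$.} The tuple $(t_2,\dots,t_s)$ has product $T$, so there are at most $\tau_{s-1}(T)$ choices for it.

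\emph{Splitting off $\ell$.} Since $\ell\mid Q(\nu)=m_2'\cdots m_s'$, we can write $m_j'=\ell_jm_j''$ with $\ell_2\cdots\ell_s=\ell$. Any valid choice of the $\ell_j$ will do, and overcounting is harmless. This contributes a factor of at most $\tau_{s-1}(\ell)$. The bound $m_j=t_j\ell_jm_j''\leq N$ then confines each $m_j''$ to $[1,N/(t_j\ell_j)]$.

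\emph{Recovering the $n$-side.} With the $t_j$, $\ell_j$, $m_1'$ and $m_j''$ fixed, the right-hand side $m_1'T\ell\, m_2''\cdots m_s''$ of the product identity is determined. The tuple $(n_1',n_2,\dots,n_s)$ is an ordered $s$-fold factorisation of this integer. So the number of admissible $n$-side tuples is at most $\tau_s(m_1'T\ell\,m_2''\cdots m_s'')$.

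\emph{Submultiplicativity.} By $\tau_s(ab)\leq\tau_s(a)\tau_s(b)$ this is at most
\[
\tau_s(\ell T)\,\tau_s(m_1')\prod_{j\geq2}\tau_s(m_j'').
\]
The point is that the factor $\tau_s(\ell T)$ separates off, and the remaining sum factorises over the box of $m$-coordinates.

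\emph{Summing over the box.} It remains to sum $\tau_s(m_1')\prod_{j\ge2}\tau_s(m_j'')$ over $m_1'\leq X$ and $m_j''\leq N/(t_j\ell_j)$. Each one-variable sum is handled by the first bound of Lemma~\ref{lem:divisor}, giving at most (length)$\cdot(2\log N)^{s-1}$; very short ranges of length $<3$ are handled trivially. The product of the lengths is
\[
X\prod_{j\ge2}\frac{N}{t_j\ell_j}=\frac{N}{T\ell k}\cdot\frac{N^{s-1}}{T\ell}=\frac{N^s}{k\ell^2T^2}.
\]
The log factor is $(2\log N)^{s(s-1)}\leq(2s\log N)^{s^2-1}$.

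\emph{Conclusion and main obstacle.} Multiplying the factors $\tau_{s-1}(T)$, $\tau_{s-1}(\ell)$ and $\tau_s(\ell T)$ by this box bound gives exactly \eqref{eq:pieces-divisor-majorant}. The main point needing care is the order of fixing. The $n$-side must be recovered from the $m$-side via $\tau_s$, rather than the other way round, so that the box lengths carry the full saving $1/(\ell^2T^2)$. Placing $\ell$ inside the $m_j'$ is what produces the second factor of $\ell$ in the denominator.
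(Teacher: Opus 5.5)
Your proof is correct, but it takes a different route from the paper.

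\textbf{The paper's route.} The paper fixes the common value $n=n_1'n_2\cdots n_s=T\,m_1'm_2'\cdots m_s'$. It bounds the tuples above it by $\tau_{s-1}(T)\tau_{s-1}(\ell)\tau_s(n)\tau_s(n/(\ell T))$, counting both sides as factorisations, and then sums over $n$. The range $n\le N^s/(\ell kT)$ supplies the factor $1/(k\ell T)$; this uses the $n$-side bounds $n_1'\le N/(\ell kT)$ and $n_j\le N$. The divisibility $\ell T\mid n$ supplies the second $1/(\ell T)$. After writing $n=\ell Tm$ and splitting off $\tau_s(\ell T)$, the paper needs the second-moment bound for $\sum_m\tau_s(m)^2$ from Lemma~\ref{lem:divisor}.

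\textbf{Your route.} You treat the $m$-side coordinates as free variables in a box and recover the $n$-side with a single $\tau_s$. Both savings then come from $m$-side size constraints: $m_1'\le N/(T\ell k)$ from $\mathscr B(\nu)$ gives $1/(k\ell T)$, and $t_j\ell_jm_j''\le N$ from $V$ gives $1/(\ell T)$. Submultiplicativity of $\tau_s$ makes the sum factorise into one-variable first-moment divisor sums.

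\textbf{Comparison.} Your version avoids fixing $n$ and avoids the second-moment estimate, and it gives the slightly better power $(2\log N)^{s(s-1)}$. The paper's version follows the same ``fix the product, count both factorisations'' template it uses in Lemma~\ref{lem:sparse-complement} and Lemma~\ref{lem:I2}.

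\textbf{One step to state explicitly.} A decomposition $\ell=\ell_2\cdots\ell_s$ with $\ell_j\mid m_j'$ always exists. For example, take $\ell_2=\gcd(\ell,m_2')$ and use $\gcd(\ell/\ell_2,m_2'/\ell_2)=1$ to continue inductively. Choosing one such decomposition for each $\nu$ then gives an injection into your set of extended tuples.
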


\noindent\textit{Proof of the claim.}
Recall that we write $\nu$ as in \eqref{eq:nu-decomposition}.
For fixed $T,\ell,k$,
we first observe the conditions that $\nu$ in the above set must satisfy, and then use the conditions to bound the number of $\nu$ with fixed $T,\ell,k$.
The conditions $T(\nu)=T$ and $\ell\mid Q(\nu)$ imply
\begin{equation}\label{eq:ooo}
n_1'n_2\cdots n_s
=T\,m_1'm_2'\cdots m_s',\qquad
\ell\mid m_2'\cdots m_s',
\end{equation}
while $\ell k\in\mathscr B(\nu)$ implies
$\ell kTn_1'\leq N$ and $\ell kTm_1'\leq N$.
Also note that $n_2,\ldots,n_s\leq N$. 
Gathering conditions we need, it therefore suffices to bound
\[
\#\bigl\{
(t_2,\ldots,t_s,n_1',n_2,\ldots,n_s,m_1',\ldots,m_s')
:\ 
t_2\cdots t_s=T,\ 
\text{\eqref{eq:ooo} holds},\ 
n_2,\ldots,n_s\leq N,\ 
\ell kTn_1'\leq N
\bigr\}.
\]

Our strategy is to first fix the following product
\[
n:=n_1'n_2\cdots n_s=T\,m_1'm_2'\cdots m_s',
\]
and bound the number of tuples for this $n$ by the divisor function, and then do the summation over all possible values of $n$.

We first observe the possible range of $n$.
Since \(n_2,\ldots,n_s\leq N\) and \(n_1'\leq N/(\ell kT)\), the possible $n$ satisfy \(n\leq N^s/(\ell kT)\).
From \(n=T\,m_1'm_2'\cdots m_s'\) and \(\ell\mid m_2'\cdots m_s'\), one has \(\ell T\mid n\).

Now fix such an $n$.
The tuple $(t_2,\ldots,t_s)$ is an ordered $(s-1)$-factorisation of the already fixed product $T$, hence at most $\tau_{s-1}(T)$ choices.
The condition $\ell\mid Q(\nu)$ does not specify how $\ell$ sits inside $m_2',\ldots,m_s'$.
Write $\ell=\ell_2\cdots\ell_s$ with $\ell_i\mid m_i'$ and $m_i'=\ell_i r_i$.
There are at most $\tau_{s-1}(\ell)$ such allocations.
After this substitution the identity $n=T\,m_1'm_2'\cdots m_s'$ becomes $n=\ell T\,m_1'r_2\cdots r_s$.
Thus $(m_1',r_2,\ldots,r_s)$ is an ordered $s$-factorisation of $n/(\ell T)$, hence at most $\tau_s(n/(\ell T))$ choices.
Independently, $(n_1',n_2,\ldots,n_s)$ is an ordered $s$-factorisation of $n$, hence at most $\tau_s(n)$ choices.
For this $n$, the number of tuples is therefore at most
$\tau_{s-1}(T)\,\tau_{s-1}(\ell)\,\tau_s(n)\,\tau_s\bigl(n/(\ell T)\bigr)$.

Summing over all possible values of $n$ gives
\[
\tau_{s-1}(T)\,\tau_{s-1}(\ell)
\sum_{\substack{n\leq N^s/(\ell kT)\\\ell T\mid n}}
\tau_s(n)\tau_s\!\left(\frac{n}{\ell T}\right).
\]
Write \(n=\ell Tm\) and use \(\tau_s(\ell Tm)\leq\tau_s(\ell T)\tau_s(m)\).  Then
\[
\sum_{\substack{n\leq N^s/(\ell kT)\\\ell T\mid n}}
\tau_s(n)\tau_s\!\left(\frac{n}{\ell T}\right)
\leq
\tau_s(\ell T)\sum_{m\leq N^s/(k\ell^2 T^2)}\tau_s(m)^2,
\]
and Lemma~\ref{lem:divisor} gives the asserted bound.
This completes the proof of the claim.
\hfill\(\square\)

\smallskip
Now sum \eqref{eq:pieces-divisor-majorant} over $T$, $\ell$ and $k$.
Here $T\leq N^{s-1}$ and, since $\ell\mid Q$, also $\ell\leq N^{s-1}$.
Using $\sum_{k\leq N}k^{-1}\ll\log N$ and
$\tau_s(\ell T)\leq\tau_s(\ell)\tau_s(T)$, $\Xi_{\mathrm I}$ is at most
\[
N^s(2s\log N)^{s^2}
\Biggl(
\sum_{\ell\leq N^{s-1}}
\frac{\tau_s(\ell)\tau_{s-1}(\ell)}{\ell^2}
\Biggr)
\Biggl(
\sum_{\delta^{-1}\leq T\leq N^{s-1}}
\frac{\tau_s(T)\tau_{s-1}(T)}{T^2}
\Biggr).
\]
Since $\tau_{s-1}\leq\tau_s$, Lemma~\ref{lem:divisor} applies to both sums.
The $\ell$-sum is a full sum and the $T$-sum is a tail from $\delta^{-1}$, so this is
\[
\ll_{d,s}\delta^{1/2} N^s(\log N)^{O(s^2)}.
\]

\smallskip\noindent\textbf{Piece II ($\ell\geq\delta^{-1}$).}
The bound~\eqref{eq:pieces-divisor-majorant} is symmetric in the final $T$- and $\ell$-sums.
This time the $T$-sum is unrestricted and Lemma~\ref{lem:divisor} is
applied to the tail $\ell\geq\delta^{-1}$.  Hence
\[
\Xi_{\mathrm{II}}
\ll_{d,s}\delta^{1/2} N^s(\log N)^{O(s^2)}.
\]

\smallskip\noindent\textbf{Step 4 (reparameterisation of the base; $T(\nu),\ell<\delta^{-1}$).}
It remains to bound $\Xi_{\mathrm{III}}$.
Both $T(\nu)$ and $\ell$ are now small. Hence our strategy is the same as in Lemma~\ref{lem:J1}, that is, to conduct exponential sums over the fibres,
group those base points as good or bad, and then apply the same argument as in Lemma~\ref{lem:J1} to bound the bad base points. In this step, a reparameterisation of the base is needed.
Recall that Piece~III is
\[
\Xi_{\mathrm{III}}
=\sum_{\substack{\nu\in\mathscr P\\ T(\nu)<\delta^{-1}}}
\sum_{\substack{\ell\mid Q(\nu)\\ \ell<\delta^{-1}}}
\bigl|S(\nu,\ell)\bigr|.
\]
Recall that each $\nu\in\mathscr P$ is a $(3s-1)$-dimensional integral vector
\[
\nu=(t_2,\ldots,t_s,n_1',n_2,\ldots,n_s,m_1',m_2',\ldots,m_s').
\]
For each $\nu$, we write $T(\nu)=t_2\cdots t_s$, $Q(\nu)=m_2'\cdots m_s'$, and $M(\nu)=\max\{n_1',m_1'\}$.
Note that for each $\nu$ fixed, and $\ell$ fixed, considering the inner sum $S(\nu,\ell)$, we have
\[
S(\nu,\ell)
=\sum_{\tfrac{A}{\ell T(\nu)}\leq k\leq\tfrac{N}{\ell T(\nu)\,M(\nu)}}
e\bigl(g(\ell k T(\nu)\,n_1')-g(\ell k T(\nu)\,m_1')\bigr).
\]
This is the desired exponential sum over an arithmetic progression.
Write \(h=\ell T(\nu)\). 
Our plan is to apply Lemma \ref{lem:diophantine-nil} to the polynomial $g_h(\cdot)$ defined by $g_h(\cdot)=g(\cdot h)$.
This requires the outer parameters $(t_2,\ldots,t_s)$ and $\ell$ to be fixed first.

Let $\pi_T(\nu)=(t_2,\ldots,t_s)$
be the projection of $\nu$ onto the first $s-1$ coordinates,
and let $\mathscr P_T:=\pi_T(\mathscr P)$ be the set of images of $\mathscr P$ under this projection.
As mentioned above, we may first fix the $(t_2,\ldots,t_s)$ coordinates and $\ell$
and sum over the remaining coordinates to use the same argument as in Lemma~\ref{lem:J1}.
So this requires us to change the order of summation.

Changing the order of summation,  and writing $\mathbf t=(t_2,\ldots,t_s)$, we obtain
\[
\Xi_{\mathrm{III}}
=\sum_{\substack{\mathbf t\in\mathscr P_T\\ t_2\cdots t_s<\delta^{-1}}}
\sum_{\ell<\delta^{-1}}
\sum_{\substack{\nu\in\mathscr P\\ \pi_T(\nu)=\mathbf t\\ \ell\mid Q(\nu)}}
\bigl|S(\nu,\ell)\bigr|.
\]
For $(\mathbf t,\ell)$ fixed, and hence $T=t_2\cdots t_s$ and $h=\ell T$ fixed, we consider the sum over the fibres and the base points $\nu$.
\begin{align}\label{eq:piece-III-sum1}
\sum_{\substack{\nu\in\mathscr P\\ \pi_T(\nu)=\mathbf t}}
\bigl|S(\nu,\ell)\bigr|.
\end{align}
Our plan is to  repeat the same argument as in Lemma~\ref{lem:J1} to bound the bad base points.
To get the same form as in Lemma~\ref{lem:J1}, we need to reparameterise the base.
For $\mathbf t=( t_2,\ldots, t_s)$ fixed, consider all $\nu=(t_2,\ldots,t_s,n_1',n_2,\ldots,n_s,m_1',m_2',\ldots,m_s')\in \mathscr P$.
We record the conditions that $\nu$ must satisfy in order to put the base in the same form as in Lemma~\ref{lem:param-general}:
\begin{center}
\begin{tabular}{@{}l@{\hspace{2.2em}}l@{}}
(i) $n_1'n_2\cdots n_s=m_1'(t_2m_2')\cdots(t_sm_s')$
&
(ii) $\gcd(n_1',m_1')=1$ and $n_1'\neq m_1'$
\\[0.45em]
(iii) $n_1',\ldots,n_s,m_1'\leq N$
&
(iv) $t_jm_j'\leq N$ for $2\leq j\leq s$.
\end{tabular}
\end{center}
Conditions~(i) and~(ii) come from Lemma \ref{lem:intersection-param}, and~(i) is the same product equation as in Remark~\ref{rem:param-remark}.
Conditions~(iii) and~(iv) are the bounds coming from $V$.
Every $\nu$ in~\eqref{eq:piece-III-sum1} satisfies (i)--(iv), so summing instead over all positive integers $n_1',\ldots,n_s,m_1',\ldots,m_s'$ obeying these conditions only enlarges the range:
\[
\eqref{eq:piece-III-sum1}
\leq
\sum_{\text{(i)--(iv)}}
\bigg|
\sum_{A/h\leq k\leq N/(h\max\{n_1',m_1'\})}
e\bigl(g(hkn_1')-g(hkm_1')\bigr)
\bigg|.
\]
The inner sum does not depend on $m_2',\ldots,m_s'$.
For $\mathbf t$ fixed, set $m_j=t_jm_j'$ when $2\leq j\leq s$, and rename $n_1',m_1'$ to $n_1,m_1$.
The product equation~(i) becomes $n_1\cdots n_s=m_1\cdots m_s$, with conditions (iii) and (iv) turned into $m_j,n_j\leq N$ when $1\leq j\leq s$ and $t_j\mid m_j$ when $2\leq j\leq s$.
Dropping the divisibility $t_j\mid m_j$ only enlarges the range. Applying~(ii), the previous display is at most the same sum over
\[
\mathscr{V}
:=\bigl\{
(\vec{n}_s,\vec{m}_s)\in V:\
\gcd(n_1,m_1)=1,\ n_1\neq m_1
\bigr\}.
\]
Thus the previous bound is at most
\[
\sum_{(\vec{n}_s,\vec{m}_s)\in\mathscr{V}}
\Biggl|
\sum_{\tfrac{A}{h}\leq k\leq\tfrac{N}{h\max\{n_1,m_1\}}}
e\bigl(g(hkn_1)-g(hkm_1)\bigr)
\Biggr|.
\]
By Remark~\ref{rem:param-remark} we may apply the map $\phi$ from Lemma~\ref{lem:param-general} to \(\mathscr{V}\) and get the parameterisation $(a_{ij})_{1\leq i,j\leq s}$ for each $(\vec{n}_s,\vec{m}_s)\in\mathscr{V}$.
Since \(\gcd(n_1,m_1)=1\) for every point of \(\mathscr{V}\), the construction gives \(a_{11}=1\).
As in Lemma \ref{lem:param-general}, write $A_1=\prod_{j=2}^s a_{1j}$, $B_1=\prod_{i=2}^s a_{i1}$, and $M_1=\max\{A_1,B_1\}$.
Since $a_{11}=1$, one has $n_1=A_1$, $m_1=B_1$, and $\max\{n_1,m_1\}=M_1$, so the summation above becomes
\[
\sum_{\phi(\mathscr{V})}
\Biggl|
\sum_{A/h\leq k\leq N/(hM_1)}
e\bigl(g(hkA_1)-g(hkB_1)\bigr)
\Biggr|.
\]
Moreover, the inner sum is constant along the remaining diagonal $(a_{22},\ldots,a_{ss})$, and that fibre has at most $\prod_{i=2}^s N/M_i$ points, the same trivial bound as in~\eqref{eq:J1-bound}.
Summing over all off-diagonal matrices therefore yields
\begin{equation}\label{eq:piece-III-matrix}
\sum_{\pi_1\bigl(\phi(\mathscr{V})\bigr)}
\Biggl|
\sum_{A/h\leq k\leq N/(hM_1)}
e\bigl(g(hkA_1)-g(hkB_1)\bigr)
\Biggr|
\prod_{i=2}^s\frac{N}{M_i}.
\end{equation}
This is the scaled form of~\eqref{eq:J1-bound}, where the main differences from \eqref{eq:J1-bound} are that the polynomial $g(\cdot)$ is replaced by $g(h\cdot)$ and the fibre range is divided by a factor $h$.

Apply the same arguments as in Lemma \ref{lem:J1} to
$g_h(n):=g(hn)$.  The summation range is $A/h\le k\le N/(hM_1)$.
The conditions required for the argument are guaranteed by $h=\ell T<\delta^{-2}$ and the restricted range of $A$.
Hence, unless alternative~(2) occurs for $g_h$, the argument of Claim~\ref{clm:J1-fibre} applies with $N$ and $A$ replaced by $N/h$ and $A/h$, respectively.
The same estimates for $\Sigma_{\mathrm I}$ and $\Sigma_{\mathrm{II}}$ as in Lemma~\ref{lem:J1} apply, with the factor $N/M_1$ replaced by $N/(hM_1)$.
Both sums therefore pick up an extra $1/h$.
Thus, unless the Diophantine alternative for $g_h$ occurs,
\begin{equation}\label{eq:piece-III-fixed}
\eqref{eq:piece-III-matrix}
\ll_{d,s} \delta^{1/2}\,\frac{N^s}{h}(\log N)^{O(s^2)}.
\end{equation}

Suppose alternative~(2) occurs for $g_h$.
Then there is a nonzero integer $K$ with $|K|\ll_d\delta^{-O_d(1)}$ and $\|Kg_h\|_{C^\infty[N/h]}\ll_d\delta^{-O_d(1)}$.
Write $g(n)=\sum_{j=0}^d\beta_j n^j$, so $g_h(n)=\sum_{j=0}^d(\beta_j h^j)n^j$.
Set $K'=Kh^d$.
Since $h=\ell T<\delta^{-2}$, one has $|K'|\ll_d\delta^{-O_d(1)}$.
By a simple calculation,
\[
\|K'g\|_{C^\infty[N]}
\leq h^d\|Kg_h\|_{C^\infty[N/h]}
\ll_d\delta^{-O_d(1)}.
\]
Thus $K'$ is the integer in alternative~(2) for $g$.

So far we have bounded the sum over the fibres and the base points $\nu$ with  $\mathbf t$ and $\ell$ fixed, that is, the quantity $\eqref{eq:piece-III-sum1}$. 
Finally, we need to bound the sum over all $\mathbf t$ and $\ell$.
The number of vectors $\mathbf t$ with product $T$ is at most
$\tau_{s-1}(T)$, while $\ell$ is already a scalar. 
Recall that $h=\ell T$.
Summing
\eqref{eq:piece-III-fixed} gives the additional factor
\[
\Biggl(\sum_{T<\delta^{-1}}\frac{\tau_{s-1}(T)}{T}\Biggr)
\Biggl(\sum_{\ell<\delta^{-1}}\frac1\ell\Biggr)
\ll (2\log N)^s
\]
by Lemma~\ref{lem:divisor}. 
  Multiplying the right hand side of~\eqref{eq:piece-III-fixed}
by $(2\log N)^s$ remains of size $\delta^{1/2}N^s(\log N)^{O(s^2)}$.   Hence
\[
\Xi_{\mathrm{III}}\ll_{d,s}\delta^{1/2} N^s(\log N)^{O(s^2)}.
\]

Combining the three estimates, we obtain
\[
|I_1|\ll_{d,s}\delta^{1/2} N^s(\log N)^{O(s^2)},
\]
unless Lemma~\ref{lem:diophantine-nil} produces alternative~(2).
\end{proof}

\subsection{\texorpdfstring{Diagonal part $I_2$}{Diagonal part I_2}}
\label{sec:I2}
We now deal with $I_2$. The following lemma is the goal of this section. 
\begin{lemma}[Diagonal $I_2$]\label{lem:I2}
Let the parameters be as in Proposition~\ref{prop:intersection}.
Recall that, with $i_1=1$,
\[
I_2=\Sg{\mathscr{F}_{i_1}\cap\cdots\cap\mathscr{F}_{i_\kappa}\cap\{n_1=m_1\}}.
\]
Then
\[
|I_2|\ll A^{-1}N^s(2\log N)^{2s^2-2}.
\]
In particular, $|I_2|\ll\delta N^s(2\log N)^{2s^2-2}\ll_{d,s}\delta^{1/2} N^s(\log N)^{O(s^2)}$ once $A>\delta^{-1}$.
\end{lemma}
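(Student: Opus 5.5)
We bound $I_2$ trivially: each term of the phase product has modulus $1$, so $|I_2|$ is at most the number of points of $\mathscr{F}_1\cap\mathscr{F}_{i_2}\cap\cdots\cap\mathscr{F}_{i_\kappa}\cap\{n_1=m_1\}$. Since $\kappa\ge2$, we keep only the condition coming from $i_2\neq 1$. After relabelling the $m$-variables, take $i_2=2$. The points counted then satisfy $n_1=m_1$ and $\gcd(n_1,m_2)\geq A$. Cancelling $n_1=m_1$ from the product equation leaves $n_2\cdots n_s=m_2\cdots m_s$, with the additional requirement that $m_2$ shares a divisor $r\geq A$ with $n_1$.

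We count as follows. First fix $r:=\gcd(n_1,m_2)\geq A$. We do not need exact gcd control; it suffices that some integer $r\ge A$ divides both $n_1$ and $m_2$, and we may take $r$ to be the gcd itself to avoid overcounting. Then $n_1$ ranges over multiples of $r$ up to $N$, which gives at most $N/r$ choices. For the reduced system, write $m_2=r m_2''$ and $n:=n_2\cdots n_s$. Then $r\mid n$, and $n\le N^{s-1}$. For fixed $n$ with $r\mid n$, the number of choices of $(n_2,\dots,n_s)$ is at most $\tau_{s-1}(n)$. The number of choices of $(m_2'',m_3,\dots,m_s)$ is at most $\tau_{s-1}(n/r)$. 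Writing $n=rm$ and using $\tau_{s-1}(rm)\le\tau_{s-1}(r)\tau_{s-1}(m)$, we get
\[
\#\ll\sum_{A\le r\le N}\frac{N}{r}\,\tau_{s-1}(r)\sum_{m\le N^{s-1}/r}\tau_{s-1}(m)^2 .
\]
By the square-sum bound of Lemma~\ref{lem:divisor}, the inner sum is at most $(N^{s-1}/r)(2\log N^{s-1})^{(s-1)^2-1}$. The total is therefore at most $N^s(2s\log N)^{O(s^2)}\sum_{r\ge A}\tau_{s-1}(r)/r^2$. We need the $s$-dependence of the log power to be absorbed. Either we accept $(\log N)^{O(s^2)}$ with constants depending on $s$, or, to match the stated exponent $2s^2-2$, we apply Cauchy--Schwarz to the $r$-sum using the third bound of Lemma~\ref{lem:divisor}, namely $\sum_{r\ge A}\tau_l(r)^2/r^2\le 5(2\log N)^{l^2-1}/A$. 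Since $\tau_{s-1}(r)\le\tau_{s-1}(r)^2$, the tail sum is $\ll A^{-1}(2\log N)^{(s-1)^2-1}$. Combining the exponents gives a power no larger than $2s^2-2$, up to the harmless conversion $\log N^{s-1}\le s\log N$, which is absorbed in the $\ll$.

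The main obstacle is purely bookkeeping. We must make sure that the divisibility $r\mid n$ really produces the saving $1/r$ in the count of $n$; this is what yields the second factor of $1/r$, and hence $\sum 1/r^2$ and the final $A^{-1}$. We must also keep track of the log exponents so that they stay within $2s^2-2$. If the exponent bookkeeping is tight, we will instead bound $\tau_{s-1}(n)\tau_{s-1}(n/r)\le\tau_{s-1}(r)\tau_{s-1}(m)^2$ and sum with the crude $\sum_{m\le X}\tau_{s-1}(m)^2\le X(2\log X)^{(s-1)^2-1}$. Finally, the ``in particular'' clause follows from the window $A>\delta^{-(C_d+2)}>\delta^{-1}$ of Proposition~\ref{prop:intersection}, and from $\delta\le\delta^{1/2}$.
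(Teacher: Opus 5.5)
Your proof is correct and is essentially the paper's argument: bound $|I_2|$ trivially by a count, fix the large gcd, and use submultiplicativity of $\tau_k$ together with the square-sum and tail bounds of Lemma~\ref{lem:divisor} to get $\sum_{r\ge A}\tau(r)^{O(1)}/r^2\ll A^{-1}(\log N)^{O(s^2)}$.

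The only difference is bookkeeping. You cancel $n_1=m_1$ and take one factor $1/r$ from $n_1$ and one from $r\mid n_2\cdots n_s$, which leaves $(s-1)$-fold divisor functions and a slightly smaller log exponent. The paper keeps all $s$ coordinates, rewrites the condition as $\gcd(m_1,m_r)\ge A$, and gets $1/h^2$ from $h^2\mid n$. Also, your step on the $r$-sum is the pointwise bound $\tau_{s-1}(r)\le\tau_{s-1}(r)^2$, not Cauchy--Schwarz.
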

\begin{proof}
Write $r=i_2\geq 2$.
Membership in $\mathscr{F}_1$ on the diagonal $n_1=m_1$ gives $n_1=m_1\geq A$.
Membership in $\mathscr{F}_r$ gives $\gcd(n_1,m_r)\geq A$. Since $n_1=m_1$, this is $\gcd(m_1,m_r)\geq A$.
Thus the support of $I_2$ sits inside
\[
\{n_1=m_1,\ \gcd(m_1,m_r)\geq A\}\cap V.
\]
Since $|e|=1$, it is enough to count the cardinality of the above set.

Write $n=n_1\cdots n_s=m_1\cdots m_s\leq N^s$. We first fix this $n$ and count the number of points satisfying the above conditions.
Fix $h=\gcd(m_1,m_r)$ with $h\geq A$.
Set $m_1=ha$ and $m_r=hb$ with $\gcd(a,b)=1$.
Then $h\mid m_1$ and $h\mid m_r$ contribute two copies of $h$ to the same factorisation of $n$, hence $h^2\mid n$.
The tuple $(a,b,(m_i)_{i\neq 1,r})$ is an ordered $s$-factorisation of
$n/h^2$, so the number of admissible $(m_1,\ldots,m_s)$ is at most $\tau_s(n/h^2)$.
The number of admissible $(n_1,\ldots,n_s)$ is at most $\tau_s(n)$.
Hence the contribution of this $h$ is $\sum_{n\leq N^s}\tau_s(n/h^2)\,\tau_s(n)$.
By Lemma~\ref{lem:divisor} and multiplicativity,
\[
\sum_{n\leq N^s}\tau_s(n/h^2)\,\tau_s(n)
\ll \frac{\tau_s(h)^2}{h^2}\,N^s(2\log N)^{s^2-1}.
\]
Summing over $h\geq A$ via $\sum_{h\geq A}\tau_s(h)^2/h^2\ll A^{-1}(2\log N)^{s^2-1}$ yields the claim.
\end{proof}

Combining Lemmas~\ref{lem:I1} and~\ref{lem:I2} proves Proposition~\ref{prop:intersection}.

\section{Conclusion of the proof}
\label{sec:conclusion}

\begin{proof}[Proof of Theorem~\ref{thm:main}]
We induct on \(s\geq 2\). The case \(s=2\) uses \(\mathcal{H}(1)\). For \(s\geq 3\), assume \(\mathcal{H}(s-1)\).

Let \(C_d>5\) be as in Proposition~\ref{prop:F1}, and choose \(c_0=c_0(d,s)>0\) sufficiently small.  Take \(N\) large enough depending on \(d\) and \(s\) that every estimate below applies.  Fix \(N^{-c_0}<\delta<1/8\) and an integer \(A\) with
\[
\delta^{-(C_d+2)}<A<\delta^{-(C_d+3)}.
\]
Let \(g\) be a polynomial of degree \(d\).

If alternative~(2) occurs in Proposition~\ref{prop:F1} or Proposition~\ref{prop:intersection}, the same $q$ gives alternative~(2) of the theorem.  Otherwise alternative~(1) holds in both, and the rest of the proof combines those bounds.

By Lemma~\ref{lem:moment-formula}, \(U_s(N)=\Sg{V}\).  Lemma~\ref{lem:sparse-complement} covers \(V\) by \(\bigcup_i\mathscr{F}_i\) up to a complement of size \(O\bigl(N^{s-1}A^{2s}(\log N)^{O(s^2)}\bigr)\).  The upper bound \(A<\delta^{-(C_d+3)}\) together with \(\delta>N^{-c_0}\) and \(c_0\) small puts this complement inside \(\delta^{1/2} N^s(\log N)^{O(s^2)}\).

Expand the union by~\eqref{eq:IE-general}.  Proposition~\ref{prop:F1} and Remark~\ref{rem:Fi-symmetry} give
\[
\Sg{\mathscr{F}_i}=(s-1)!\,N^s+O_{d,s}\bigl(\delta^{1/2} N^s(\log N)^{O(s^2)}\bigr)
\]
for each \(i\).  For \(\kappa\geq 2\), Proposition~\ref{prop:intersection} bounds each \(\kappa\)-fold intersection by \(O_{d,s}\bigl(\delta^{1/2} N^s(\log N)^{O(s^2)}\bigr)\).

There are \(s\) single sets and at most \(2^s\) intersection terms. Absorbing these factors into the implied constant,
\[
U_s(N)=s!\,N^s+O_{d,s}\bigl(\delta^{1/2} N^s(\log N)^{O(s^2)}\bigr).
\]  Dividing by \(N^s\) gives
\[
\bigl|\mathcal{M}_s(N)-s!\bigr|
\ll_{d,s}\delta^{1/2}(\log N)^{O(s^2)}.
\]
Replacing the free parameter $\delta$ by $\delta^2$ (and shrinking $c_0$) yields $\mathcal{H}(s)$.  The Diophantine alternative becomes $|q|\ll_d(\delta^2)^{-O_d(1)}=\delta^{-O_d(1)}$, as stated.
\end{proof}

\begin{proof}[Proof of Corollary~\ref{thm:cor-clt}]
Fix an integer $s\geq 2$ and set
\begin{equation}\label{eq:delta-clt}
\delta=\delta_s(N)=(\log N)^{-K},
\end{equation}
where $K=K(s)$ is large enough depending only on $s$ that the error $\delta(\log N)^{O(s^2)}$ in Theorem~\ref{thm:main} tends to $0$.
For $N$ large enough (depending on $d$ and~$s$), one has $\delta>N^{-c_0}$ with $c_0=c_0(d,s)$ as in Theorem~\ref{thm:main}, so the theorem applies.  Suppose towards a contradiction that alternative~(2) holds for this~$\delta$: there exists $q\in\mathbb{Z}\setminus\{0\}$ with
\[
|q|\ll_d\delta^{-O_d(1)}\qquad\text{and}\qquad
\|qg\|_{C^\infty[N]}\ll_d\delta^{-O_d(1)}.
\]
By Definition~\ref{def:c-infty}, the second bound means that for every $1\leq i\leq d$,
\[
\|q\beta_i\|_{\mathbb{R}/\mathbb{Z}}\ll_d\delta^{-O_d(1)}\,N^{-i}.
\]
Since $|q|\ll_d\delta^{-O_d(1)}$ and $\delta=(\log N)^{-K}$ with $K=K(s)$, one has $|q|\ll_{d,s}(\log N)^{O_{d,s}(1)}$.
Choose $\varepsilon=\varepsilon(d,s)>0$ small enough that $|q|^{\varepsilon}\le(\log N)/2$.
The hypothesis then yields $C=C(g,\varepsilon)>0$ such that some $i$ has
$\|q\beta_i\|\geq C\exp\bigl(-|q|^{\varepsilon}\bigr)$.  Combining with the display above,
\[
\exp\bigl(-|q|^{\varepsilon}\bigr)\ll_d\delta^{-O_d(1)}N^{-i}.
\]
The left side is at least $N^{-1/2}$, while the right side is
$\ll_{d,s}(\log N)^{O_{d,s}(1)}N^{-1}$.
This is impossible for large~$N$.  Thus alternative~(2) cannot occur, and Theorem~\ref{thm:main} gives
\[
\bigl|\mathbb{E}|S_N|^{2s}-s!\bigr|
\ll_{d,s}\delta(\log N)^{O(s^2)}
\to 0
\]
as $N\to\infty$.  Thus $\mathcal{M}_s(N)=\mathbb{E}|S_N|^{2s}\to s!$ for every fixed integer $s\geq 2$.  (The case $s=1$ is immediate: $\mathbb{E}|S_N|^2=1$ by Steinhaus orthogonality and $|e(g(n))|=1$.)

For odd moments, fix integers $s_1,s_2\geq 0$ with $s_1\neq s_2$; by conjugation, we may assume $s_2<s_1$.  If $s_2=0$, then the Steinhaus orthogonality gives
\[
\mathbb{E}\bigl[S_N^{s_1}\bigr]
=N^{-s_1/2}e\bigl(s_1g(1)\bigr)\to 0.
\]
We may therefore assume that $1\leq s_2<s_1$.  Expanding and using complete multiplicativity together with the Steinhaus orthogonality relation, we obtain
\begin{align*}
\mathbb{E}\bigl[S_N^{s_1}\,\overline{S_N}^{s_2}\bigr]
&=N^{-(s_1+s_2)/2}
\sum_{\substack{n_1,\ldots,n_{s_1}\leq N\\ m_1,\ldots,m_{s_2}\leq N\\ n_1\cdots n_{s_1}=m_1\cdots m_{s_2}}}
\prod_{i=1}^{s_1}e(g(n_i))
\prod_{j=1}^{s_2}\overline{e(g(m_j))}.
\end{align*}
Since $|e(g(\cdot))|=1$, the absolute value of the right-hand side is at most
\[
N^{-(s_1+s_2)/2}
\cdot\#\bigl\{(n_1,\ldots,n_{s_1},m_1,\ldots,m_{s_2})\in[1,N]^{s_1+s_2}:
n_1\cdots n_{s_1}=m_1\cdots m_{s_2}\bigr\}.
\]
Write $P=n_1\cdots n_{s_1}=m_1\cdots m_{s_2}$.  Then $P\leq N^{s_2}$, and for each such $P$ the number of ordered factorisations is at most $\tau_{s_1}(P)\tau_{s_2}(P)$.  Hence the cardinality is
\[
\leq\sum_{P\leq N^{s_2}}\tau_{s_1}(P)\tau_{s_2}(P).
\]
Cauchy--Schwarz and Lemma~\ref{lem:divisor} give
\begin{align*}
\sum_{P\leq N^{s_2}}\tau_{s_1}(P)\tau_{s_2}(P)
&\leq\Bigl(\sum_{P\leq N^{s_2}}\tau_{s_1}(P)^2\Bigr)^{1/2}
\Bigl(\sum_{P\leq N^{s_2}}\tau_{s_2}(P)^2\Bigr)^{1/2}\\
&\leq N^{s_2}\,(2\log N)^{O((s_1+s_2)^2)}.
\end{align*}
Therefore
\[
\bigl|\mathbb{E}\bigl[S_N^{s_1}\,\overline{S_N}^{s_2}\bigr]\bigr|
\ll_{s_1,s_2} N^{(s_2-s_1)/2}\,(\log N)^{O((s_1+s_2)^2)}
\to 0
\qquad(N\to\infty).
\]
Thus \(\mathbb{E}\bigl[S_N^{s_1}\,\overline{S_N}^{s_2}\bigr]\) tends to \(0\) whenever \(s_1\neq s_2\). Together with \(\mathbb{E}|S_N|^{2s}\) tending to \(s!\) for every \(s\geq 1\), the moments of \(S_N\) match those of a standard complex normal with mean \(0\) and variance \(1\). By the moment method \cite[Chapter~5, Theorem~8.6]{Gut05}, \(S_N\) converges in law to this distribution.
\end{proof}